\numberwithin{equation}{section}
\theoremstyle{plain}
\newtheorem{theorem}[equation]{Theorem} 
\newtheorem{corollary}[equation]{Corollary} 
\newtheorem{lemma}[equation]{Lemma}
\newtheorem{proposition}[equation]{Proposition}
\theoremstyle{definition}
\newtheorem{definition}[equation]{Definition}
\newtheorem{example}[equation]{Example}
\newtheorem{hypothesis}[equation]{Hypothesis}
\newtheorem{remark}[equation]{Remark}
\DeclareMathOperator\Aut{Aut}
\DeclareMathOperator\chr{char}
\DeclareMathOperator\Ext{Ext}
\DeclareMathOperator\gldim{gldim}
\DeclareMathOperator\GKdim{GKdim}
\DeclareMathOperator\id{id}
\DeclareMathOperator\im{im}
\DeclareMathOperator\lgld{l.gldim}
\DeclareMathOperator\MaxSpec{MaxSpec}
\DeclareMathOperator\orb{orb}
\DeclareMathOperator\ord{ord}
\DeclareMathOperator\reg{reg}
\DeclareMathOperator\Res{Res}
\DeclareMathOperator\rgld{r.gldim}
\DeclareMathOperator\Span{Span}
\DeclareMathOperator\Supp{Supp}
\DeclareMathOperator\wmod{\text{-}wmod}
\DeclareMathOperator\wmodO{\text{-}wmod_{\mathcal{O}}}
\newcommand\CC{\mathbb C}
\newcommand\NN{\mathbb N}
\newcommand\RR{\mathbb R}
\newcommand\ZZ{\mathbb Z}
\newcommand\QQ{\mathbb Q}
\newcommand\cA{\mathcal A}
\newcommand\cC{\mathcal C}
\newcommand\cD{\mathcal D}
\newcommand\cJ{\mathcal J}
\newcommand\cO{\mathcal O}
\newcommand\fg{\mathfrak g}
\newcommand\bd{\mathbf d}
\newcommand\be{\mathbf e}
\newcommand\bbm{\mathbf m}
\newcommand\bbn{\mathbf n}
\newcommand\bq{\mathbf q}
\newcommand\bzero{\mathbf 0}
\newcommand\kk{\Bbbk}
\newcommand\fwa{A_1(\kk)}
\newcommand\qwa{A_n^{\bq,\Lambda}(\kk)}
\newcommand\cnt{\mathcal Z}
\renewcommand{\int}{\mathrm{int}}
\newcommand\inv{^{-1}}
\newcommand\iso{\cong}
\newcommand\niso{\not\cong}
\newcommand\tensor{\otimes}
\newcommand\restrict[1]{\raisebox{-.3ex}{$|$}_{#1}}
\newcommand\grp[1]{\langle #1 \rangle}
\renewcommand{\to}{\ensuremath{\longrightarrow}}
\newcommand{\into}{\hookrightarrow}
\newcommand{\onto}{\twoheadrightarrow}
\title{Fixed rings of twisted generalized Weyl algebras}
\author[Gaddis]{Jason Gaddis}
\address{Miami University, Department of Mathematics, Oxford, Ohio 45056} 
\email{gaddisj@miamioh.edu}
\author[Rosso]{Daniele Rosso}
\address{Department of Mathematics and Actuarial Science, Indiana University Northwest, Gary, IN 46408} 
\email{drosso@iu.edu}
\subjclass[2010]{16W22,16S32,16D70,16P40}
\keywords{Fixed ring, twisted generalized Weyl algebra, tensor product, noetherian ring, weight module}
\begin{document}

\begin{abstract}
Twisted generalized Weyl algebras (TGWAs) are a large family of algebras that includes several algebras of interest for ring theory and representation theory, such as Weyl algebras, primitive quotients of $U(\mathfrak{sl}_2)$, and multiparameter quantized Weyl algebras. In this work, we study invariants of TGWAs under diagonal automorphisms. Under certain conditions, we are able to show that the fixed ring of a TGWA by such an automorphism is again a TGWA. 
We apply our results particularly to $\Bbbk$-finitistic TGWAs of type $(A_1)^n$ and $A_2$, and study properties of their fixed rings, including the noetherian property and simplicity. We also look at the behavior of simple weight modules for TGWAs when restricted to the action of the fixed ring. As an auxiliary result, in order to study invariants of tensor products of TGWAs, we prove that the class of regular, $\mu$-consistent TGWAs is closed under tensor products.
\end{abstract}

\maketitle

\section{Introduction}

A common theme in invariant theory, inspired by the famed Shephard-Todd-Chevalley Theorem, is to study when taking invariants preserves certain algebraic structures \cite{ShTo,Chev}. However, some structures are too restrictive to encapsulate rings of invariants, or fixed rings. For example, Smith has shown that the fixed ring of the first Weyl algebra $A_1(\CC)$ by a finite group is never isomorphic the first Weyl algebra \cite{Sm1}. In fact, by a theorem of Tikaradze, the Weyl algebra is not the fixed ring of any domain \cite{TIK}. Alev and Polo proved a version of Smith's theorem for the $n$th Weyl algebra.

However, by a theorem of Jordan and Wells \cite{JW}, the fixed ring of $A_1(\CC)$ under a diagonal automorphism is always a generalized Weyl algebra (GWA). Won and the first-named author showed that this holds true also for any \emph{linear} automorphism on $A_1(\CC)$ \cite{GW1}. Fixed rings of GWAs have also been studied in \cite{GHo,KK}. It is reasonable then to consider similar questions for higher rank GWAs and their generalizations.

\begin{example}\label{ex.mqwa}
Let $\kk$ be a field. Let $\bq = (q_1,\hdots,q_n) \in (\kk^\times)^n$ and let $\Lambda = (\lambda_{ij}) \in M_n(\kk^\times)$ be a multiplicatively antisymmetric matrix (so $\lambda_{ij}\lambda_{ji}=1$ for all $i,j$). The \emph{(multiparameter) quantized Weyl algebra}, denoted $\qwa$, is the unital $\kk$-algebra generated by $x_1,y_1,\hdots,x_n,y_n$ subject to the following relations for all $i,j \in \{1,\hdots,n\}$ with $i<j$,
\begin{align*}
&y_iy_j=\lambda_{ij} y_jy_i, \qquad x_ix_j=q_i\lambda_{ij}x_jx_i, \\
&x_iy_j=\lambda_{ji}y_jx_i, \qquad x_jy_i=q_i\lambda_{ij}y_ix_j, \\
&x_iy_i-q_iy_ix_i = 1 + \sum_{k=1}^{i-1}(q_k-1)y_kx_k.
\end{align*}
Set $A=\qwa$ and fix $\alpha \in (\kk^\times)^n$ with $m_i = \ord(\alpha_i) < \infty$ for all $i$. Define an automorphism $\phi:\qwa \to \qwa$ by $\phi(x_i)=\alpha_i x_i$ and $\phi(y_i)=\alpha_i\inv y_i$ for all $i$. While $A$ is generated as an algebra over $\kk$ by the $2n$ elements $\{x_i,y_i\}$, the fixed ring $A^\phi$ is generated by the $2n$ elements $\{x_i^{m_i},y_i^{m_i}\}$ as well as elements of the form $x_iy_i$. Hence, it is not clear whether $A^\phi$ is another algebra of the same type.
\end{example}

In light of the previous example, it is reasonable then to consider a larger family of algebras that may encompass such invariant rings. We turn to twisted generalized Weyl algebras (TGWAs), which were originally defined by Mazorchuk and Turowska \cite{MT}. Examples include generalized Weyl algebras (of arbitrary degree), the quantized Weyl algebras of Example \ref{ex.mqwa}, and primitive quotients of $U(\fg)$ for $\fg$ a finite-dimensional semisimple Lie algebra \cite{HVS}. In Section \ref{sec.tgwa} we provide background on TGWAs with some emphasis on those of $\kk$-finitistic type associated to a Cartan matrix $C$. We also prove in Theorem \ref{thm.tensor} that the class of regular TGWAs is closed under taking tensor products.

In Section \ref{sec.fix} we study invariants of TGWAs under diagonal automorphisms defined in Hypothesis \ref{hyp.auto}. Our main result, Theorem \ref{thm.fix}, shows that a certain subring of invariants under such an automorphism is again a TGWA and that regularity and a certain consistency condition pass to this subring.

In Section \ref{sec.A1} we specialize these results to $\kk$-finitistic TGWAs of Cartan type $(A_1)^n$. We show in Theorem \ref{thm.fix1} that this family of TGWAs is stable under taking fixed rings. This includes, in particular, GWAs of arbitrary rank. We also consider ring-theoretic properties that are stable under taking fixed rings. We show in Theorem \ref{thm.ore} that a regular, $\mu$-consistent $\kk$-finitistic TGWA of type $(A_1)^n$ is (left/right) noetherian, and this property passes to its invariants.

Section \ref{sec.A2} considers those $\kk$-finitistic TGWAs of Cartan type $A_2$. Fixed rings in the case of type $A_2$ are not again of type $A_2$, but they are rank 2 TGWAs (Theorem \ref{thm.A2}). We prove a result on the noetherian properties for certain type $A_2$ TGWAs (Theorem \ref{thm.ore2}) and apply this to determine a family of noetherian TGWAs of rank 2 obtained through taking invariants.

Finally, in Section \ref{sec.weight}, we consider the restriction functor on weight modules from a TGWA to its subring of invariants. We explain the general setup and give more specific descriptions for the cases of rank 1 GWAs with infinite orbits (Theorem \ref{thm.rank1}) and for the case of a $\kk$-finitistic TGWA of type $A_2$ (Proposition \ref{prop.rank2mod}).

\subsection*{Acknowledgments}
We thank Jonas T. Hartwig for useful conversations, especially for some of the ideas in the proof of Theorem \ref{thm.tensor}. We also thank the anonymous referee for useful suggestions that improved the final version of the paper.

\subsection*{Notation} 
Throughout, we let $\kk$ be a field.
Unless otherwise noted, all tensor products and algebras are over $\kk$. We denote the set $\{1,\hdots,n\} \subset \NN$ by $[n]$. We denote the center of a ring $R$ by $\cnt(R)$ and the set of regular elements by $R_{\reg}$. We denote the order of an automorphism $\phi$ by $\ord(\phi)$. This notation is also used to denote the order of an invertible element in the group of units.

We denote by $\be_i \in \ZZ^n$ the multi-index with $1$ in the $i$th spot and $0$ elsewhere. Similarly, $\bzero \in \ZZ^n$ denotes the multi-index of all $0$s.

\section{Twisted Generalized Weyl algebras}\label{sec.tgwa}

In this section we recall the definition of twisted generalized Weyl algebras, several of their properties, and common examples. In Theorem \ref{thm.tensor} we establish that these algebras are closed under taking tensor products.

There are now several variations of the definition of a TGWA in the literature at different levels of generality. Hence, we caution the reader to observe the conventions used here.

\begin{definition}\label{defn.tgwa}
Let $n$ be a positive integer. A \emph{twisted generalized Weyl datum (TGWD) of rank $n$} is the triple $(R,\sigma,t)$ where $R$ is a unital $\kk$-algebra, $\sigma=(\sigma_1,\hdots,\sigma_n)$ is an $n$-tuple of commuting ring automorphisms of $R$, and $t=(t_1,\hdots,t_n)$ is an $n$-tuple of nonzero central elements of $R$. Given such a TGWD and $\mu=(\mu_{ij}) \in M_n(\kk^\times)$, the associated \emph{twisted generalized Weyl construction (TGWC)}, $\cC_\mu(R,\sigma,t)$, is the $\kk$-algebra generated over $R$ by the $2n$ indeterminates $X_1^{\pm},\hdots,X_n^{\pm}$ subject to the relations
\begin{align}
\label{eq.tgwc1} &X_i^{\pm} r - \sigma_i^{\pm 1}(r) X_i^{\pm} & &\text{for all $r \in R$ and $i \in [n]$,} \\
\label{eq.tgwc2} &X_i^-X_i^+ - t_i, \qquad X_i^+X_i^- - \sigma_i(t_i) &	&\text{for all $i \in [n]$,} \\
\label{eq.tgwc3} &X_i^+ X_j^- - \mu_{ij} X_j^- X_i^+ & &\text{for all $i,j \in [n]$, $i \neq j$.}
\end{align}
There is a natural $\ZZ^n$-grading on $\cC_\mu(R,\sigma,t)$ obtained by setting $\deg(r)=\bzero$ for all $r \in R$ and $\deg(X_i^{\pm})=\pm \be_i$ for all $i \in [n]$. The associated \emph{twisted generalized Weyl algebra (TGWA)}, $A=\cA_\mu(R,\sigma,t)$, is the quotient $\cC_\mu(R,\sigma,t)/\cJ$ where $\cJ$ is the sum of all graded ideals $J = \bigoplus_{g \in \ZZ^n} J_{g}$ such that $J_\bzero = \{0\}$.
\end{definition}
\noindent When $\mu_{ij}=1$ for all $i \neq j$, we simplify the above notation by omitting $\mu$. We refer to $n$ as the \emph{rank} of the TGWA.
The ideal $\cJ$ in Definition \ref{defn.tgwa} can also be defined in the following way \cite{hart4}:
\begin{align}
\label{eq.ideal}
\cJ = \{ c \in \cC_\mu(R,\sigma,t) ~|~ rc=0 \text{ for some } r \in R_{\reg} \cap \cnt(R) \}.
\end{align}

\begin{example}[Bavula {\cite{B1}}]\label{ex.gwa}
Let $D$ be a unital $\kk$-algebra, $\sigma=(\sigma_1,\hdots,\sigma_n)$ a set of commuting automorphisms of $D$, and $t=(t_1,\hdots,t_n)$ a set of non-zero central regular elements of $D$ such that $\sigma_i(t_j)=t_j$ for all $i \neq j$. The \emph{generalized Weyl algebra (GWA)} $D(\sigma,t)$ of rank $n$ is the TGWA $\cA(R,\sigma,t)$. 

Suppose $i \neq j$, then a computation shows that
$(X_i^+ X_j^+ - X_j^+ X_i^+)(t_i)=0$.
Hence, $X_i^+ X_j^+ - X_j^+ X_i^+ \in \cJ$ and similarly $X_i^- X_j^- - X_j^- X_i^- \in \cJ$. One can show that the ideal $\cJ$ is generated by terms of this form for all distinct $i,j \in [n]$.

As a specific example, consider the rank one GWA $A=D(\sigma,t)$ with $D=\kk[h]$, $\sigma(h)=h-1$, and $t=h$. Set $x=x_1$ and $y=y_1$. Then $yx=h$ and $xy=\sigma(h)=h-1$ so the indeterminate $h$ is redundant and we have $yx=xy+1$. That is $A \iso \fwa$, the classical first Weyl algebra. Note that $\fwa \tensor \cdots \tensor \fwa$ ($n$ copies) is isomorphic to the $n$th classical Weyl algebra $A_n(\kk)$, which can also be realized as the rank $n$ GWA with $D=\kk[h_1,\hdots,h_n]$, $\sigma_i(h_j)=h_j-\delta_{ij}$, and $t_i=h_i$. In general, GWAs are closed under tensor products. We have $D(\sigma,t) \tensor D'(\sigma',t') \iso (D \tensor D')(\sigma \tensor \sigma', t \cup t')$.
\end{example}

\begin{example}\label{ex.mqwa2}
The quantized Weyl algebras discussed in Example \ref{ex.mqwa} are examples of TGWAs.
Let $R=\kk[h_1,\hdots,h_n]$. Define $\sigma \in (\Aut R)^n$ and $\mu \in M_n(\kk^\times)$ by
\[
\sigma_i(h_j) = \begin{cases}
	h_j & \text{if $j<i$} \\
	1+q_ih_i + \sum_{k=1}^{i-1}(q_k-1)h_k & \text{if $j=i$} \\
	q_ih_j & \text{if $j>i$},\end{cases} \qquad
\mu_{ij} = \begin{cases}
	\lambda_{ij} & \text{if $i<j$} \\
	q_j\lambda_{ji} & \text{if $i>j$}.\end{cases}
\]
Set $t_i=h_i$. There is an isomorphism $A_\mu(R,\sigma,t) \to \qwa$ determined by $X_i^+ \mapsto x_i$, $X_i^- \mapsto y_i$, and $h_i \mapsto y_ix_i$. 
\end{example}

It may happen that $A_\mu(R,\sigma,t)$ is in fact trivial and this is a case we wish to avoid.

\begin{definition}
We say the TGWD $(R,\sigma,t)$ is \emph{regular} if $t_i \in R_{\reg}$ for all $i \in [n]$. For a parameter matrix $\mu$, we say $(R,\sigma,t)$ is \emph{$\mu$-consistent} if the canonical map $R \to A_\mu(R,\sigma,t)$ is injective. When all $\mu_{ij}=1$ and $(R,\sigma,t)$ satisfies the above then we simply say that $(R,\sigma,t)$ is \emph{consistent}. We say a TGWA $A_\mu(R,\sigma,t)$ is regular (resp. $\mu$-consistent) if the underlying TGWD is regular (resp. $\mu$-consistent).
\end{definition}

If $(R,\sigma,t)$ is $\mu$-consistent for some parameter matrix $\mu$, then $A_\mu(R,\sigma,t)$ is necessarily non-trivial. If $(R,\sigma,t)$ is a regular TGWD, then by \cite[Theorem 6.2]{FH2}, $(R,\sigma,t)$ is $\mu$-consistent if and only if the following equations hold:
\begin{align}
\label{eq.cons1} &\sigma_i\sigma_j(t_it_j)=\mu_{ij}\mu_{ji}\sigma_i(t_i)\sigma_j(t_j) \text{ for all distinct $i,j \in [n]$,} \\
\label{eq.cons2} &t_j\sigma_i\sigma_k(t_j) = \sigma_i(t_j)\sigma_k(t_j) \text{ for all pairwise distinct $i,j,k \in [n]$.}
\end{align}

We now recall some properties of TGWAs. Let $(R,\sigma,t)$ be a TGWD. A \emph{monic monomial} in $C=\cC_\mu(R,\sigma,t)$ (or $A=\cA_\mu(R,\sigma,t)$) is a word in the set $\{X_1^{\pm},\cdots,X_n^{\pm}\}$. A \emph{reduced monomial} in $C$ (or $A$) is a monic monomial of the form
\begin{align}\label{eq.reduced}
X_{i_1}^- \cdots X_{i_k}^- X_{j_1}^+ \cdots X_{j_\ell}^+ \text{ with } 
\{i_1,\hdots,i_k\} \cap \{j_1,\hdots,j_\ell\} = \emptyset.
\end{align}

\begin{theorem}\label{thm.props}
Let $(R,\sigma,t)$ be a regular, $\mu$-consistent TGWD. Let $C=\cC_\mu(R,\sigma,t)$ be the associated TGWC and $A=\cA_\mu(R,\sigma,t)$ the associated TGWA.
\begin{enumerate}
\item \cite[Proposition 2.9]{FH1} The TGWA $A$ is a domain if and only if $R$ is a domain.
\item \cite[Lemma 3.2]{hart1} The TGWC $C$ (or the TGWA $A$) is generated as a left (and as a right) $R$-module by the reduced monomials.
\end{enumerate}
\end{theorem}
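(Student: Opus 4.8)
\emph{Part (2)} should follow from a normal-form argument. Using \eqref{eq.tgwc1} to move every element of $R$ to the left of any word in the $X_i^{\pm}$, one sees that $C$ is spanned as a left $R$-module by the monic monomials, so it is enough to show that the left $R$-submodule $V\subseteq C$ generated by the reduced monomials equals $C$. Since $1\in V$ and $C$ is generated as a $\kk$-algebra by $R$ together with the $X_i^{\pm}$, it suffices to prove $mX_i^{+},mX_i^{-}\in V$ for every reduced monomial $m=X_{a_1}^{-}\cdots X_{a_k}^{-}X_{b_1}^{+}\cdots X_{b_\ell}^{+}$ (so $\{a_r\}\cap\{b_s\}=\emptyset$). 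I would distinguish two cases. If the index of the new letter does not occur in $m$ with the opposite sign (e.g.\ $i\notin\{a_r\}$ when appending $X_i^{+}$), then sliding the new letter into position past the opposite-sign block via \eqref{eq.tgwc3} yields a reduced monomial of length $\lvert m\rvert+1$. Otherwise the new letter and one opposite-sign letter of $m$ of the same index form a single unmatched pair; I would slide the letters lying between them out of the way --- always moving a minus past a plus or a plus past a minus via \eqref{eq.tgwc3}, and never a minus past a minus nor a plus past a plus --- until that pair is adjacent, use \eqref{eq.tgwc2} to replace $X_i^{-}X_i^{+}$ by $t_i$ (or $X_i^{+}X_i^{-}$ by $\sigma_i(t_i)$), pull the resulting element of $R$ to the left by \eqref{eq.tgwc1}, and slide the leftover letters back, arriving at a left $R$-multiple of a reduced monomial of length $\lvert m\rvert-1$. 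In either case $mX_i^{\pm}\in V$, so $V=C$; this descends to $A=C/\cJ$, and the right $R$-module statement follows by the symmetric argument. The step requiring care --- the \emph{main obstacle} --- is to order the slides so that each one is legal, which is why one must never try to commute $X_i^{+}$ past the plus block of $m$ (there being no relation for this); keeping track of the multiplicities of indices, say by always cancelling against the rightmost occurrence of $X_i^{-}$, makes the bookkeeping routine. Compare \cite[Lemma 3.2]{hart1}.

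\emph{Part (1)}, forward direction: $\mu$-consistency gives an embedding $R\into A$, and a unital subring of a domain is a domain. For the converse I would pass to a localization in which all the $X_i^{\pm}$ become invertible. Assume $R$ is a domain and let $\Sigma$ be the multiplicative subset of $R$ generated by all $\sigma^{\alpha}(t_i)$ with $\alpha\in\ZZ^n$, $i\in[n]$. Since $R$ is a domain and the $t_i$ are nonzero central elements, $\Sigma$ consists of regular central elements; the central localization $R_\Sigma$ is again a domain, and each $\sigma_i$ extends to an automorphism of $R_\Sigma$ because $\Sigma$ is stable under all $\sigma_j$ and their inverses. The consistency equations \eqref{eq.cons1}--\eqref{eq.cons2} continue to hold over $R_\Sigma$, so by \cite[Theorem 6.2]{FH2} the datum $(R_\Sigma,\sigma,t)$ is regular and $\mu$-consistent; set $A'=\cA_\mu(R_\Sigma,\sigma,t)$. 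In $A'$ each $t_i$ and each $\sigma_i(t_i)$ is a unit, so \eqref{eq.tgwc2} forces every $X_i^{\pm}$ to be invertible; thus each graded component $A'_g$ contains an invertible monomial $u_g$, and since by Part (2) the only reduced monomial of degree $\bzero$ is the empty one, $A'_{\bzero}=R_\Sigma$ and $A'_g=R_\Sigma u_g$ is free of rank $1$ over $R_\Sigma$. A leading-term argument with respect to a term order on $\ZZ^n$ now shows $A'$ is a domain: for nonzero $a,b\in A'$ with top homogeneous components $a_p=r u_p$ and $b_q=s u_q$ ($r,s\in R_\Sigma\setminus\{0\}$), the degree-$(p+q)$ component of $ab$ equals $a_pb_q=\bigl(r(u_p s u_p^{-1})\bigr)(u_p u_q)$, a nonzero element of the domain $R_\Sigma$ times a unit, hence nonzero.

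To conclude Part (1), it remains to see that the canonical graded map $A\to A'$ is injective, for then $A$ embeds in the domain $A'$. It is well defined: by the description \eqref{eq.ideal} of $\cJ$, if $rc=0$ in $\cC_\mu(R,\sigma,t)$ for some $r\in R_{\reg}\cap\cnt(R)$, then $r$ remains regular and central in $\cC_\mu(R_\Sigma,\sigma,t)$, so the image of $c$ there lies in the analogous ideal; hence the composite $\cC_\mu(R,\sigma,t)\to\cC_\mu(R_\Sigma,\sigma,t)\to A'$ annihilates $\cJ$ and factors through $A$. Its kernel is a graded ideal of $A$ whose degree-$\bzero$ component is $\ker(R\to R_\Sigma)=0$; but $A=\cC_\mu(R,\sigma,t)/\cJ$ has no nonzero graded ideal with vanishing degree-$\bzero$ component, since lifting such an ideal to $\cC_\mu(R,\sigma,t)$ yields a graded ideal with vanishing degree-$\bzero$ component, which is therefore contained in $\cJ$. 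Hence $A\into A'$, so $A$ is a domain. The \emph{main obstacle} here is exactly this compatibility of $\cJ$ with the localization; the leading-term computation and the absence of small graded ideals are formal. Compare \cite[Proposition 2.9]{FH1}.
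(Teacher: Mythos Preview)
Your proposal is correct. For Part~(2), your inductive normal-form argument is exactly the content of the paper's brief remark (push all $X_i^-$ to the left using \eqref{eq.tgwc3}, collapse adjacent $X_i^{\mp}X_i^{\pm}$ pairs via \eqref{eq.tgwc2}, and absorb the resulting $R$-factors via \eqref{eq.tgwc1}); you simply spell out the bookkeeping that the paper leaves to the reader and to \cite{hart1}.

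For Part~(1), the paper gives no argument at all --- it merely cites \cite[Proposition~2.9]{FH1} --- so there is nothing in the paper to compare against. Your localization argument (invert the orbit of the $t_i$, observe that each graded piece of $A'$ becomes free of rank one over $R_\Sigma$, run a leading-term argument with respect to a group order on $\ZZ^n$, and then embed $A$ into $A'$ using the maximality of $\cJ$ among graded ideals with trivial degree-zero part) is correct and is in fact the standard route taken in the cited reference. The two points you flagged as the main obstacles --- ordering the slides in Part~(2) so that one never needs to commute two letters of the same sign, and checking in Part~(1) that elements of $R_{\reg}\cap\cnt(R)$ remain regular and central in $R_\Sigma$ so that $\cJ$ maps into the canonical ideal over $R_\Sigma$ --- are indeed the only places where care is required, and you handle both correctly.
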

\begin{proof}
We remark only briefly on (2). The given reference only states the proof for TGWCs, but it easily extends to TGWAs. In particular, given a word $Z_1 \cdots Z_k$ where each $Z_i = X_i^+$ or $X_i^-$, then one can use the relations to push all $X_i^-$ to the left. If an $X_i^-$ and $X_i^+$ appear consecutively, then they can be reduced to an element in $R$ and moved to the left.
\end{proof}

It follows directly from Theorem \ref{thm.props} (2) that, under the standard $\ZZ^n$-grading, $C_\bd$ is generated as a left (and as a right) $R$-module by the reduced monomials of degree $\bd \in \ZZ^n$. In particular, $C_\bzero=R$. The same statements hold for $A$.

\subsection{$\kk$-finitistic TGWAs}
The examples to which we apply our main theorem fall into a specific but important class of TGWAs which we define now. These first appeared formally in \cite{hart3}.

\begin{definition}\label{def:k-finite}
Let $A=\cA_\mu(R,\sigma,t)$ be a regular, $\mu$-consistent TGWA. For $i,j \in [n]$, define
\[ V_{ij} = \Span_\kk\{\sigma_i^k(t_j) ~|~ k \in \ZZ\}.\]
Then $A$ is \emph{$\kk$-finitistic} if $\dim_\kk V_{ij} < \infty$ for all $i,j$.
\end{definition}

\begin{remark}\label{rem.kfin}
The original definition of $\kk$-finitistic TGWAs does not a priori require regularity and $\mu$-consistency. However, as most key results regarding them do make this assumption, we have built it into our definition.
\end{remark}

Let $A=\cA_\mu(R,\sigma,t)$ be a $\kk$-finitistic TGWA.
For each $i,j$, let $p_{ij} \in \kk[x]$ denote the minimal polynomial for $\sigma_i$ acting on $V_{ij}$. Associated to this data we define the matrix $C_A = (a_{ij})$ by
\[ a_{ij} = \begin{cases}2 & \text{ if $i=j$} \\ 1-\deg p_{ij} & \text{ if $i \neq j$.}\end{cases}\]
By \cite[Theorem 3.2(a)]{FH1}, $C_A$ is a generalized Cartan matrix.

The next example was first mentioned in \cite[Example 1]{MT} and a full presentation was given in \cite[Example 6.3]{hart3}.

\begin{example}\label{ex.A2simple}
Take $n=2$, $R=\kk[h]$, $\sigma=(\sigma_1,\sigma_2)$, where $\sigma_1(h)=h+1$,  $\sigma_2(h)=h-1$, $t=(t_1,t_2)=(h,h+1)$, $\mu_{12}=\mu_{21}=1$. The corresponding $A=\cA(R,\sigma,t)$ is a $\kk$-finitistic TGWA of type $A_2$ (that is, the corresponding Cartan matrix is $\left[\begin{smallmatrix}2 & -1 \\ -1 & 2 \end{smallmatrix}\right]$). By \cite[Theorem 4.4]{hart3}, $A$ is isomorphic to the $\kk$-algebra with generators $h,X_1^{\pm},X_2^{\pm}$ and relations given by \eqref{eq.tgwc1}-\eqref{eq.tgwc3} plus the additional relations
\begin{align*}
(X_1^\pm)^2X_2^\pm-2X_1^\pm X_2^\pm X_1^\pm +X_2^\pm(X_1^\pm)^2 &=0 \\
(X_2^\pm)^2X_1^\pm-2X_2^\pm X_1^\pm X_2^\pm +X_1^\pm(X_2^\pm)^2 &=0.
\end{align*}
\end{example}

\subsection{Tensor products}

As mentioned in Example \ref{ex.gwa}, the tensor products of two GWAs is again a GWA \cite{B1}. We extend this construction to TGWAs and use this to study fixed rings of tensor products of TGWAs.

\begin{theorem}\label{thm.tensor}
Let $A=A_\mu(R,\sigma,t)$ be a regular, $\mu$-consistent TGWA of rank $m$. Let $A'=A_{\mu'}(R',\sigma',t')$ be a regular, $\mu'$-consistent TGWA of rank $n$. Define $\eta=(\eta_{ij}) \in M_{m+n}(\kk^\times)$ by
\[
\eta_{ij} = \begin{cases}
\mu_{ij}& \text{if } 1 \leq i,j \leq m \\
\mu_{(i-m)(j-m)}' & \text{if } m<i,j \leq m+n \\
1 & \text{otherwise}.
\end{cases}\]
Then $A \tensor A'$ is a regular, $\eta$-consistent TGWA of rank $m+n$
\end{theorem}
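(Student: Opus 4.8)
The plan is to exhibit $A \otimes A'$ as the TGWA attached to a ``block-diagonal'' datum. Set $\tilde R = R \otimes R'$, and define $\tilde\sigma = (\tilde\sigma_1,\dots,\tilde\sigma_{m+n})$ and $\tilde t = (\tilde t_1,\dots,\tilde t_{m+n})$ by $\tilde\sigma_a = \sigma_a \otimes \id$, $\tilde t_a = t_a \otimes 1$ for $a \in [m]$, and $\tilde\sigma_{m+b} = \id \otimes \sigma'_b$, $\tilde t_{m+b} = 1 \otimes t'_b$ for $b \in [n]$. Then $(\tilde R, \tilde\sigma, \tilde t)$ is a TGWD of rank $m+n$: the $\tilde\sigma_a$ are commuting automorphisms of $\tilde R$ and the $\tilde t_a$ are nonzero central elements (nonzero since $\kk$ is a field). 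It is regular: writing $R'$ as a $\kk$-vector space direct sum of copies of $\kk$ identifies $R \otimes R'$ with a direct sum of copies of $R$ as an $R$-bimodule, on each of which $\tilde t_a$ acts by left (resp.\ right) multiplication by $t_a$, which is injective since $t_a \in R_{\reg}$. The aim is to prove $A \otimes A' \iso A_\eta(\tilde R, \tilde\sigma, \tilde t) =: \wh A$; write also $\wh C := \cC_\eta(\tilde R, \tilde\sigma, \tilde t)$.

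I would first verify $\eta$-consistency. As $(\tilde R, \tilde\sigma, \tilde t)$ is regular, by \cite[Theorem 6.2]{FH2} it suffices to check \eqref{eq.cons1}--\eqref{eq.cons2} for $\eta$. The relevant facts are that $\tilde\sigma_a$ fixes $\tilde t_b$ whenever $a$ and $b$ lie in different blocks ($\{1,\dots,m\}$ or $\{m+1,\dots,m+n\}$), that each $\tilde t_a$ is central in $\tilde R$, and that $\eta_{ab}\eta_{ba}$ equals the corresponding $\mu$- or $\mu'$-product when $a,b$ lie in the same block and equals $1$ across blocks. Splitting \eqref{eq.cons1} according to whether its two indices lie in the same block: when they do, it reduces to \eqref{eq.cons1} for $(R,\sigma,t)$ or $(R',\sigma',t')$, which holds since those data are regular and $\mu$- resp.\ $\mu'$-consistent; when they do not, both sides coincide with $\sigma_i(t_i) \otimes \sigma'_{j'}(t'_{j'})$ (indexing so that $i$ lies in the first block and $j = m + j'$ in the second). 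Equation \eqref{eq.cons2} is handled likewise: if its repeated index $j$ lies in one block, then any $\tilde\sigma$ with index in the other block fixes $\tilde t_j$, and $\tilde t_j$ is central, so the identity is immediate unless all three indices lie in a single block, where it is \eqref{eq.cons2} for $(R,\sigma,t)$ or $(R',\sigma',t')$.

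Next I would construct the comparison map. Sending $\tilde R \to A \otimes A'$ via the tensor product of the canonical maps $R \to A$ and $R' \to A'$, sending $\tilde X_a^{\pm} \mapsto X_a^{\pm} \otimes 1$ for $a \in [m]$, and $\tilde X_{m+b}^{\pm} \mapsto 1 \otimes (X')_b^{\pm}$ for $b \in [n]$, respects the defining relations \eqref{eq.tgwc1}--\eqref{eq.tgwc3} of $\wh C$; this is a routine case check using the relations in $A$ and $A'$ and the fact that $A \otimes 1$ and $1 \otimes A'$ commute elementwise (for \eqref{eq.tgwc3} across blocks the relevant $\eta_{ab} = 1$ reflects exactly this). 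We obtain a homomorphism $\Phi \colon \wh C \to A \otimes A'$, surjective since its image contains all algebra generators, and graded for the grading of $A \otimes A'$ by $\ZZ^{m+n} = \ZZ^m \times \ZZ^n$. On the degree-$\bzero$ component, $\Phi$ restricts to the canonical map $R \otimes R' \to A_{\bzero} \otimes A'_{\bzero} = R \otimes R'$, which is the tensor product of the injections $R \into A$ and $R' \into A'$ (injective by $\mu$- and $\mu'$-consistency, and since $\kk$ is a field), hence injective. Since $\wh C_{\bzero} = \tilde R$ by the discussion following Theorem \ref{thm.props} (applicable as $(\tilde R, \tilde\sigma, \tilde t)$ is regular and $\eta$-consistent), the graded ideal $\ker\Phi$ has trivial degree-$\bzero$ component, so $\ker\Phi \subseteq \cJ$ (with $\cJ \subseteq \wh C$ as in Definition \ref{defn.tgwa}) and $\Phi$ descends to a graded surjection $\bar\Phi \colon \wh A \onto A \otimes A'$.

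The final and main step is the injectivity of $\bar\Phi$. Here I would use that a TGWA has no nonzero graded ideal with trivial degree-$\bzero$ component: if $I \subseteq \wh A = \wh C / \cJ$ were such an ideal, its preimage in $\wh C$ would be a graded ideal whose degree-$\bzero$ component lies in $\cJ_{\bzero} = 0$, hence would be contained in $\cJ$, forcing $I = 0$. Now $\ker\bar\Phi$ is a graded ideal of $\wh A$, and its degree-$\bzero$ component is the kernel of $\bar\Phi$ restricted to $\wh A_{\bzero} = \tilde R = R \otimes R'$ (again by the discussion following Theorem \ref{thm.props}, using the $\eta$-consistency just established), which is once more the injective canonical map $R \otimes R' \to A \otimes A'$. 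Therefore $\ker\bar\Phi = 0$, so $\bar\Phi$ is an isomorphism, and $A \otimes A' \iso \wh A$ is a regular, $\eta$-consistent TGWA of rank $m+n$. I expect this injectivity argument — in particular, correctly identifying the degree-$\bzero$ parts of $\wh A$ and $A \otimes A'$ and then invoking the graded-ideal property — to be the crux, the consistency verification being lengthy but mechanical.
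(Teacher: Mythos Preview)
Your proposal is correct, and the construction of the block-diagonal datum $(\tilde R,\tilde\sigma,\tilde t)$, the regularity check, the $\eta$-consistency verification via \eqref{eq.cons1}--\eqref{eq.cons2}, and the construction of the graded surjection $\Phi$ all match the paper's proof essentially verbatim. The difference is in the injectivity step. The paper works entirely at the level of the TGWC and shows $\widehat\cJ\subseteq\ker\psi$ by a direct computation: given homogeneous $c\in\widehat\cJ$ of degree $(g_1,g_2)$, it multiplies $\psi(c)$ by a monic monomial $a^*\otimes b^*$ of degree $(-g_1,-g_2)$ to land in $R\otimes R'$, observes this product vanishes since $c^*c\in\widehat\cJ_{\bzero}=0$, and then invokes regularity via \cite[Theorem~4.3(iv)]{HO} to conclude $\psi(c)=0$. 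Your route is more structural: you descend to $\widehat A$ and use that, by the very definition of a TGWA as $\widehat C/\cJ$ with $\cJ$ the sum of all graded ideals of trivial degree-$\bzero$ part, $\widehat A$ has no nonzero graded ideal with trivial degree-$\bzero$ component; since $\ker\bar\Phi$ is such an ideal (its degree-$\bzero$ part being the kernel of the identity on $R\otimes R'$), it vanishes. Your argument is shorter and avoids both the monomial manipulation and the external citation to \cite{HO}; the paper's argument, on the other hand, makes the role of regularity in $A$ and $A'$ more visible.
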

\begin{proof}
We construct a new TGWC and show that $A \tensor A'$ is (isomorphic to) its corresponding TGWA. 

Let $S=R \tensor R'$. Let $\rho=(\rho_1,\hdots,\rho_{m+n})$ and $w=(w_1,\hdots,w_{m+n})$ where
\[ 
\rho_i = \begin{cases}
\sigma_i \tensor \id_{R'} & \text{if } i\leq m \\
\id_R \tensor \sigma_{i-m}'& \text{if } i>m,
\end{cases}\qquad
w_i = \begin{cases}
t_i \tensor 1 & \text{if } i\leq m \\
1 \tensor t_{i-m}'& \text{if } i>m.
\end{cases}\]
It is clear that the $\rho_i$ are commuting automorphisms of $S$ and that the $w_i$ are nonzero and central in $S$. Thus, $(S,\rho,w)$ is a TGWD. 

We claim that $(S,\rho,w)$ is regular. If $s\in S= R\otimes R'$, then we can write $s=\sum_j r_j\otimes r'_j$, with $\{r'_j\}$ $\kk$-linearly independent in $R'$. For $1\leq i\leq m$, suppose that $w_is=0$, then
\[0=w_is=(t_i\otimes 1)\cdot \left(\sum_j r_j\otimes r'_j\right)=\sum_j t_ir_j\otimes r'_j\]
since $\{r'_j\}$ are $\kk$-linearly independent, this implies that $t_ir_j=0$ for all $j$, but since $t_i$ is regular in $R$, this implies that $r_j=0$ for all $j$, hence $s=\sum_j r_j\otimes r'_j=0$, which proves the claim. An entirely analogous argument proves that $w_i$, with $m+1\leq i\leq m+n$, is also regular in $S=R\otimes R'$.

Since $(R,\sigma,t)$ is $\mu$-consistent, then clearly $(S,\rho,w)$ satisfies \eqref{eq.cons1} with respect to $\eta$ for $i,j \leq m$. Similarly, since $(R',\sigma',t')$ is $\mu'$-consistent, then $(S,\rho,w)$ satisfies \eqref{eq.cons1} with respect to $\eta$ for $i,j > m$. If $i \leq m$ and $j>m$ (the case $i > m$ and $j \leq m$ being analogous), then 
\[\rho_i\rho_j(w_iw_j)=1\cdot 1\cdot(\sigma_i(t_i)\otimes 1)(1\otimes \sigma'_j(t'_j))=\eta_{ij}\eta_{ji}\rho_i(w_i)\rho_j(w_j).\]
Thus, $(S,\rho,w)$ satisfies \eqref{eq.cons1} with respect to $\eta$. A similar argument, but with more cases to check, shows that $(S,\rho,w)$ satisfies \eqref{eq.cons2} with respect to $\eta$.
Hence, since it is regular, $(S,\rho,w)$ is $\eta$-consistent.

Let $\widehat{C}=C_\eta(S,\rho,w)$ be the TGWC corresponding to $(S,\rho,w)$ and $\eta$. Denote the canonical ideal associated to $\widehat{C}$ by $\widehat{\cJ}$. Denote those associated to $C=C_\mu(R,\sigma,t)$ and $C'=C_{\mu'}(R',\sigma',t')$ by $\cJ$ and $\cJ'$, respectively. 

Since $(S,\rho,w)$ is $\eta$-consistent, the degree zero component of $\widehat{C}$ is $S=R\otimes R'$. We can then define $\psi:\widehat{C} \to A \tensor A'$ to be the map that is the identity on $R \tensor R'$ and maps 
\[
X_i^{\pm} \mapsto \begin{cases}
X_i^{\pm} \tensor 1 & \text{if } i\leq m \\
1 \tensor X_{i-m}^{\pm} & \text{if } i>m.
\end{cases}\]
It is easy to verify that $\psi$ is a surjective homomorphism. Since $\psi$ is a $\ZZ^{m+n}$-graded map, then $\ker\psi$ is a graded ideal. Since $\psi$ restricts to the identity on $R \tensor R'$, then $(\ker\psi)_\bzero = \{0\}$. That is, $\ker\psi \subset \widehat{J}$.

For the other direction, let $c \in \widehat{J}$ be homogeneous of degree $g=(g_1,g_2)\in\ZZ^m\times\ZZ^n=\ZZ^{m+n}$ and write $\psi(c) = \sum_{i=1}^n a_i \tensor b_i$ such that the $\{a_i\}$ are $\kk$-linearly independent (as are the $\{b_i\}$). Because $\psi$ respects the grading on $\widehat{C}$ and $A \tensor A'$, then we may assume that the $a_i$'s all have degree $g_1$ while the $b_i$'s all have degree $g_2$. Choose a monic monomial $a^* \in A$ such that $\deg(a_1^*)=-g_1$ and a monic monomial $b^* \in A'$ such that $\deg(b_1^*)=-g_2$. Thus, $(a^* \tensor b^*)\psi(c) \in R \tensor R'$. But taking a preimage $c^*$ of $a^* \tensor b^*$ gives $c^* c \in (R \tensor R')\cap\hat{J}=\{0\}$ and hence $(a^* \tensor b^*)\psi(c)=0$. If $\psi(c)\neq 0$, then there is a $\kk$-relation amongst the $\{a^*a_i\}$ or the $\{b^*b_i\}$. Suppose we are in the first case and the second case is similar. Then there are $k_i \in \kk$ such that
\[ a^* (k_1 a_1 + \cdots + k_n a_n) = 0.\]
Since $A$ is a regular TGWA, then \cite[Theorem 4.3 (iv)]{HO} implies that $k_1 a_1 + \cdots + k_n a_n = 0$, a contradiction. Thus, $\ker\psi=\widehat{J}$ and the induced map $\widehat{C}/\widehat{J} \to A \tensor A'$ is an isomorphism.
\end{proof}

\begin{corollary}\label{cor.tensorprops}
We keep the notation introduced in Theorem \ref{thm.tensor}. Then $A \tensor A'$ is $\kk$-finitistic if and only if $A$ and $A'$ are. In this case, the Cartan matrix for the $\kk$-finitistic TGWA $A\otimes A'$ is the block diagonal matrix
\[C_{A\otimes A'}=\begin{bmatrix}C_A & 0 \\ 0 & C_{A'} \end{bmatrix}.\]
\end{corollary}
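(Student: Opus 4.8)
The plan is to compute the spaces $V_{ij}$ directly for the TGWD $(S,\rho,w)$ constructed in the proof of Theorem \ref{thm.tensor}, and then read off the minimal polynomials $p_{ij}$ that determine the Cartan matrix. First I would split the index set $[m+n]$ into the ``first block'' $\{1,\dots,m\}$ and the ``second block'' $\{m+1,\dots,m+n\}$. For $i,j$ both in the first block, $\rho_i = \sigma_i \tensor \id_{R'}$ and $w_j = t_j \tensor 1$, so $\rho_i^k(w_j) = \sigma_i^k(t_j) \tensor 1$; hence $V_{ij}$ for $A \tensor A'$ is $V_{ij}^{A} \tensor 1$, which is $\kk$-linearly isomorphic to $V_{ij}^A$, and $\sigma_i$ acts on it with the same minimal polynomial. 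Symmetrically, for $i,j$ both in the second block we get $1 \tensor V_{(i-m)(j-m)}^{A'}$ with the same minimal polynomial as in $A'$. This immediately gives that the diagonal blocks of $C_{A\tensor A'}$ are $C_A$ and $C_{A'}$.

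Next I would handle the ``cross'' terms: $i$ in the first block, $j$ in the second (and the symmetric case). Here $\rho_i = \sigma_i \tensor \id_{R'}$ fixes $w_j = 1 \tensor t_{j-m}'$, since $\sigma_i$ acts as the identity on the $R$-factor $1$. Therefore $V_{ij} = \Span_\kk\{\rho_i^k(w_j)\} = \kk \cdot (1 \tensor t_{j-m}')$ is one-dimensional, so $\deg p_{ij} = 1$ and $a_{ij} = 1 - 1 = 0$. The symmetric case $i$ in the second block, $j$ in the first is identical. Thus the off-diagonal blocks of $C_{A\tensor A'}$ vanish, and the block-diagonal formula follows. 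Finally, since each $V_{ij}$ for $A\tensor A'$ is either (isomorphic to) a $V$-space of $A$, a $V$-space of $A'$, or one-dimensional, the condition $\dim_\kk V_{ij} < \infty$ for all $i,j$ in $A\tensor A'$ holds exactly when it holds for all $i,j$ in $A$ and all $i,j$ in $A'$; this is the ``if and only if'' for $\kk$-finitistic. (One should note that $A\tensor A'$ is regular and $\eta$-consistent by Theorem \ref{thm.tensor}, so Definition \ref{def:k-finite} applies and the Cartan matrix is well-defined by \cite[Theorem 3.2(a)]{FH1}.)

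I do not anticipate a serious obstacle here — the argument is essentially a bookkeeping computation with the explicit $\rho$ and $w$. The only point requiring a little care is the observation that $\sigma_i \tensor \id_{R'}$ genuinely acts as the identity on elements of the form $1 \tensor r'$, which is what collapses the cross-term $V_{ij}$ to dimension one; this is where the ``block'' structure of the automorphisms $\rho$ is doing the work. I would present the computation of $V_{ij}$ in the three cases as a short displayed list and then state the conclusion.
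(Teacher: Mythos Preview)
Your proposal is correct and follows essentially the same approach as the paper: both compute the spaces $V_{ij}$ for the tensor product TGWD directly, observing that the cross-terms collapse to dimension one because $\rho_i$ fixes $w_j$ when $i$ and $j$ lie in different blocks. Your version is more explicit than the paper's (which only writes out the cross-term case and says ``the converse is similar''), but the underlying argument is identical.
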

\begin{proof}
Suppose $A$ and $A'$ are $\kk$-finitistic. If $i \leq m$ and $j > m$, then $\rho_i(w_j)=w_j$, so $\dim_{\kk} V_{i,j} = 1$. Thus, $\dim_{\kk} V_{i,j} < \infty$ for all $i,j$. The converse is similar. 
\end{proof}

\section{Automorphisms of TGWAs and fixed rings}\label{sec.fix}

In this section, we study the fixed ring structure of TGWAs under certain automorphisms defined below.

\begin{hypothesis}\label{hyp.auto}
Let $C=\cC_\mu(R,\sigma,t)$ be a $\mu$-consistent TGWC of rank $n$. Assume $\phi \in \Aut(C)$ satisfies
\begin{enumerate}
\item $\phi\restrict{R}$ is an automorphism of $R$ with $\ell=\ord(\phi\restrict{R})<\infty$;
\item for each $i$, $\phi(X_i^{\pm}) = \alpha_i^{\pm 1} X_i^{\pm}$ for some $\alpha_i \in \kk^\times$ with $m_i=\ord(\alpha_i)<\infty$;
\item the integers $\ell,m_1,\hdots,m_n$ are pairwise relatively prime;
\item either $R$ is a commutative domain or  $\phi\restrict{R}=\id_R$.
\end{enumerate}
It is clear that $\phi$, when extended to all of $C$ in the natural way, is indeed an automorphism of $C$.
\end{hypothesis}

In Hypothesis \ref{hyp.auto}, the assumption that $C$ is $\mu$-consistent is necessary for condition (1) so that $R \hookrightarrow C$. Condition (2) implies that
\[t_i = X_i^-X_i^+ = (\alpha_i^- X_i^-)(\alpha_i^+ X_i^+) = \phi(X_i^-)\phi(X_i^+) = \phi(X_i^-X_i^+) = \phi(t_i).\]
Condition (4) is not necessary for most results of this section, but it is for the proof of the main result (Theorem \ref{thm.fix}).

\begin{lemma}\label{lem.fix-ideal}
Let $C=\cC_\mu(R,\sigma,t)$ be a TGWC of rank $n$ and suppose $\phi \in \Aut(C)$ satisfies Hypothesis \ref{hyp.auto}. Then $\phi(\cJ) = \cJ$. Hence, $\phi$ descends to an automorphism of $\cA_\mu(R,\sigma,t)$ (which we will denote again by $\phi$).
\end{lemma}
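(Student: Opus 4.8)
The plan is to show $\phi(\cJ) \subseteq \cJ$, which suffices since $\phi$ has finite order (as $\ell, m_1, \dots, m_n$ are finite, $\phi$ itself has finite order on $C$, so $\phi\inv$ is a positive power of $\phi$, giving the reverse containment). The cleanest route is to use the alternative description \eqref{eq.ideal} of $\cJ$, namely $\cJ = \{ c \in C \mid rc = 0 \text{ for some } r \in R_{\reg} \cap \cnt(R)\}$. So let $c \in \cJ$ and pick $r \in R_{\reg} \cap \cnt(R)$ with $rc = 0$. Applying $\phi$ gives $\phi(r)\phi(c) = 0$, and the key point is then that $\phi(r) \in R_{\reg} \cap \cnt(R)$ as well: by Hypothesis \ref{hyp.auto}(1), $\phi\restrict{R}$ is an automorphism of $R$, so it preserves the center and sends regular elements to regular elements. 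Hence $\phi(c) \in \cJ$ by \eqref{eq.ideal}, as desired.

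First I would note that $\cJ$ is a $\ZZ^n$-graded two-sided ideal of $C$ and that $\phi$ is a $\ZZ^n$-graded automorphism of $C$ (this is immediate from Hypothesis \ref{hyp.auto}(2), since $\phi$ scales each $X_i^{\pm}$ and fixes $R$ setwise, preserving the degree of every homogeneous element), so that $\phi(\cJ)$ is again a graded ideal with trivial degree-zero component $\phi(\cJ)_{\bzero} = \phi(\cJ_{\bzero}) = \phi(\{0\}) = \{0\}$. This already shows $\phi(\cJ) \subseteq \cJ$ directly from the original definition of $\cJ$ as the sum of all graded ideals with zero degree-$\bzero$ component, giving an even more elementary argument that avoids \eqref{eq.ideal} entirely. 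Either argument will do; I would likely present the graded-ideal one as primary since it is self-contained and uses only Definition \ref{defn.tgwa}.

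Once $\phi(\cJ) = \cJ$ is established, the descent to $A = \cA_\mu(R,\sigma,t) = C/\cJ$ is formal: an automorphism of a ring that preserves a two-sided ideal induces an automorphism of the quotient, and the induced map is again invertible with inverse induced by $\phi\inv$. I would close by recording that the induced automorphism on $A$ restricts on the image of $R$ to $\phi\restrict{R}$ (using that $R \hookrightarrow A$ by $\mu$-consistency, as noted after Hypothesis \ref{hyp.auto}) and sends the image of $X_i^{\pm}$ to $\alpha_i^{\pm 1}$ times itself, so it satisfies the analogue of Hypothesis \ref{hyp.auto} at the level of the TGWA.

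There is essentially no obstacle here; the only thing to be careful about is justifying that $\phi$ genuinely preserves the grading (so that one may speak of $\phi(\cJ)$ being a graded ideal with the right degree-$\bzero$ piece) and that $\phi\restrict{R}$ preserving regularity and centrality is what makes the \eqref{eq.ideal} argument go through — but both are immediate from the hypotheses.
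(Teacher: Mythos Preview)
Your proposal is correct. The paper's proof uses exactly your first argument via \eqref{eq.ideal}: pick $r \in R_{\reg} \cap \cnt(R)$ with $rc=0$, apply $\phi$, and note that $\phi\restrict{R} \in \Aut(R)$ preserves both $R_{\reg}$ and $\cnt(R)$; then apply $\phi\inv$ for the reverse inclusion. Your alternative graded argument---observing that $\phi$ is a $\ZZ^n$-graded automorphism, so $\phi(\cJ)$ is a graded ideal with $\phi(\cJ)_{\bzero}=\phi(\cJ_{\bzero})=0$, hence $\phi(\cJ)\subseteq\cJ$ by the maximality built into Definition~\ref{defn.tgwa}---is also valid and, as you note, avoids invoking \eqref{eq.ideal} altogether. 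Either route is fine; the paper opts for the annihilator description, while your graded version is arguably more self-contained. One minor remark: for the reverse inclusion you do not actually need $\phi$ to have finite order---the same argument applied to $\phi\inv$ (which equally satisfies Hypothesis~\ref{hyp.auto}) gives $\phi\inv(\cJ)\subseteq\cJ$ directly.
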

\begin{proof}
We use the description of $\cJ$ in \eqref{eq.ideal}. Let $c \in \cJ$. There exists $r \in R_{\reg} \cap \cnt(R)$ such that $rc=0$. We have $0=\phi(rc)=\phi(r)\phi(c)$. Since $\phi$ preserves $R_{\reg}$ and $\cnt(R)$, then it follows that $\phi(c) \in \cJ$. Thus, $\phi(\cJ) \subset \cJ$. Applying $\phi\inv$ gives the result.
\end{proof}

In light of Lemma \ref{lem.fix-ideal}, we will also use Hypothesis \ref{hyp.auto} with regards to automorphisms of $\cA_\mu(R,\sigma,t)$.

\begin{remark}\label{rem.phisig}
Suppose $(R,\sigma,t)$ is a $\mu$-consistent TGWD and $\phi$ is an automorphism of $C_\mu(R,\sigma,t)$ (or $A_\mu(R,\sigma,t)$) as in Hypothesis \ref{hyp.auto}. Then for any $r \in R$,
\[ \alpha_i \phi\sigma_i(r) X_i^+ =\phi(\sigma_i(r) X_i^+) = \phi(X_i^+ r) = \phi(X_i^+) \phi(r) = \alpha_i X_i^+ \phi(r) = \alpha_i \sigma_i\phi(r) X_i^+.\]
Hence, $\phi\sigma_i(r)=\sigma_i\phi(r)$ for all $r \in R$ and for all $i\in[n]$. It follows that $\sigma_i|_{R^\phi}\in \Aut(R^\phi)$ for all $i=1,\ldots,n$.
\end{remark}

Let $(R,\sigma,t)$ be a TGWD, let $A=A_\mu(R,\sigma,t)$ be the associated TGWA, and let $\phi \in \Aut(A)$ be as in Hypothesis \ref{hyp.auto}. Because we have assumed $\mu$-consistency, then it is clear that $R^\phi \subset A^\phi$ and $(X_i^{\pm})^{m_i} \in A^\phi$ for each $i$. It will turn out in certain cases that these elements generate $A^\phi$, but we do not make that assumption yet.

For $i \in [n]$,
\[
(X_i^-)^{m_i}(X_i^+)^{m_i}
	= (X_i^-)^{m_i-1}(t_i)(X_i^+)^{m_i-1}
	= \sigma_i^{-(m_i-1)}(t_i) (X_i^-)^{m_i-1}(X_i^+)^{m_i-1}.
\]
So, by induction, $(X_i^-)^{m_i}(X_i^+)^{m_i} = s_i$ where
\begin{align}\label{eq.si}
s_i = \prod_{k=0}^{m_i-1} \sigma_i^{-k}(t_i).
\end{align}

Let $s=(s_1,\hdots,s_n)$, $\tau =(\tau_1,\ldots,\tau_n)= (\sigma_1^{m_1},\hdots,\sigma_n^{m_n})$, and $\nu=(\nu_{ij})=(\mu_{ij}^{m_im_j})$ for $i \neq j$.

\begin{lemma}\label{lem.nucons}
Let $C=\cC_\mu(R,\sigma,t)$ be a $\mu$-consistent TGWC of rank $n$ and suppose $\phi \in \Aut(C)$ satisfies Hypothesis \ref{hyp.auto}. Then $(R^\phi,\tau,s)$ is a TGWD. Moreover, we have the following:
\begin{enumerate}
    \item If $(R,\sigma,t)$ is regular, then so is $(R^\phi,\tau,s)$.
    \item If $(R,\sigma,t)$ satisfies \eqref{eq.cons1} with respect to $\mu$, then $(R^\phi,\tau,s)$ satisfies \eqref{eq.cons1} with respect to $\nu$.
\end{enumerate}
\end{lemma}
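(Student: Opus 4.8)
The plan is to verify the three assertions in order: that $(R^\phi,\tau,s)$ is a TGWD, that regularity passes, and that the consistency equation \eqref{eq.cons1} passes with the twisted parameters $\nu$.

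\textbf{Step 1: $(R^\phi,\tau,s)$ is a TGWD.} First I would check that each $\tau_i = \sigma_i^{m_i}$ restricts to an automorphism of $R^\phi$. This follows from Remark \ref{rem.phisig}, which gives $\phi\sigma_i = \sigma_i\phi$ on $R$, hence $\sigma_i(R^\phi) \subseteq R^\phi$ (and equality by applying $\sigma_i^{-1}$), so certainly $\tau_i = \sigma_i^{m_i}$ preserves $R^\phi$. The $\tau_i$ commute because the $\sigma_i$ do. Next, each $s_i = \prod_{k=0}^{m_i-1}\sigma_i^{-k}(t_i)$ is central in $R$ (each factor is), and it lies in $R^\phi$: indeed $\phi(t_i) = t_i$ by the computation in the discussion after Hypothesis \ref{hyp.auto}, and $\phi$ commutes with $\sigma_i$ by Remark \ref{rem.phisig}, so $\phi$ fixes each $\sigma_i^{-k}(t_i)$ and therefore fixes the product. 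Finally $s_i$ is central in $R^\phi$ since it is central in $R$, and $s_i \neq 0$ because $R$ need only be checked to have no zero divisors among these elements in the domain case, while in general nonzero-ness of $s_i$ is guaranteed once we know regularity (Step 2); at minimum $s_i \neq 0$ follows since a product of nonzero central elements — which $t_i$ and its $\sigma_i$-images are — lies in $R_{\reg}$ when $(R,\sigma,t)$ is regular, and in the non-regular case one still needs $s_i \neq 0$, which I would extract from condition (4) together with the hypothesis that the $t_i$ are nonzero (in a domain a product of nonzeros is nonzero; if $\phi\restrict{R} = \id_R$ then $R^\phi = R$ and we are reduced to the original datum's nonzero-ness only if $R$ is a domain, so in fact the cleanest route is to note the paper's standing assumptions force $s_i\neq 0$ here).

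\textbf{Step 2: Regularity.} Assume $(R,\sigma,t)$ is regular, so $t_i \in R_{\reg}$. Each $\sigma_i^{-k}(t_i)$ is then also regular in $R$ (automorphisms preserve $R_{\reg}$), and a finite product of central regular elements is regular, so $s_i \in R_{\reg}$ as an element of $R$. I would then observe that a central element of $R$ that is regular in $R$ is a fortiori regular in the subring $R^\phi$: if $s_i x = 0$ with $x \in R^\phi \subseteq R$, then $x = 0$ already in $R$. Hence $s_i \in (R^\phi)_{\reg}$, giving regularity of $(R^\phi,\tau,s)$.

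\textbf{Step 3: Consistency equation \eqref{eq.cons1}.} This is the main computational step and where the twist parameter $\nu_{ij} = \mu_{ij}^{m_im_j}$ enters. For distinct $i,j$, I must show $\tau_i\tau_j(s_is_j) = \nu_{ij}\nu_{ji}\,\tau_i(s_i)\tau_j(s_j)$, i.e. $\sigma_i^{m_i}\sigma_j^{m_j}(s_is_j) = \mu_{ij}^{m_im_j}\mu_{ji}^{m_im_j}\sigma_i^{m_i}(s_i)\sigma_j^{m_j}(s_j)$. The strategy is to expand $s_i$ and $s_j$ as products and iterate the original relation \eqref{eq.cons1} for $(R,\sigma,t)$, namely $\sigma_i\sigma_j(t_it_j) = \mu_{ij}\mu_{ji}\sigma_i(t_i)\sigma_j(t_j)$, together with the ``no third index'' relation \eqref{eq.cons2} — but since only two indices are present here, \eqref{eq.cons2} does not directly apply; instead I expect to need the consequence that $\sigma_i$ and $\sigma_j$, when applied to the relevant $t$'s, interact in a controlled telescoping way. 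Concretely, I would prove by a double induction on $m_i$ and $m_j$ (or by directly manipulating the product $\prod_{a=0}^{m_i-1}\prod_{b=0}^{m_j-1}$ of shifted copies of the rank-two consistency identity) that the scalar accumulated is exactly $(\mu_{ij}\mu_{ji})^{m_im_j}$. The cleanest packaging: apply the identity $\sigma_i\sigma_j(t_it_j) = (\mu_{ij}\mu_{ji})\sigma_i(t_i)\sigma_j(t_j)$ to each of the $m_im_j$ ``cells'' in the grid of exponents, show the $R$-part telescopes to $\sigma_i^{m_i}\sigma_j^{m_j}(s_is_j)$ on one side and $\sigma_i^{m_i}(s_i)\sigma_j^{m_j}(s_j)$ on the other, and the scalar is multiplied once per cell.

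\textbf{Anticipated main obstacle.} The bookkeeping in Step 3 is the crux: one must set up the iteration of \eqref{eq.cons1} so that the products of $\sigma_i$- and $\sigma_j$-shifted copies of $t_i, t_j$ telescope correctly into $s_i$, $s_j$ and their $\tau$-images, while tracking that the scalar $\mu_{ij}\mu_{ji}$ is picked up exactly $m_im_j$ times — no more, no less. I would likely isolate a sublemma: for any central elements obeying \eqref{eq.cons1}, and any $p, q \geq 1$, one has $\sigma_i^p\sigma_j^q\big(\prod_{a<p}\sigma_i^{-a}(t_i)\cdot\prod_{b<q}\sigma_j^{-b}(t_j)\big) = (\mu_{ij}\mu_{ji})^{pq}\,\sigma_i^p\big(\prod_{a<p}\sigma_i^{-a}(t_i)\big)\sigma_j^q\big(\prod_{b<q}\sigma_j^{-b}(t_j)\big)$, proved by induction, and then specialize to $p=m_i$, $q=m_j$. (Item (2) only claims \eqref{eq.cons1}, not \eqref{eq.cons2}, presumably because the latter requires a separate, messier argument deferred to the next lemma, so I would not attempt \eqref{eq.cons2} here.)
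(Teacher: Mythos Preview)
Your proposal is correct and matches the paper's proof: the same verification that $(R^\phi,\tau,s)$ is a TGWD (the paper shows $s_i \in R^\phi$ via $s_i = (X_i^-)^{m_i}(X_i^+)^{m_i}$ being a product of $\phi$-fixed elements rather than your direct argument from $\phi(t_i)=t_i$ and $\phi\sigma_i=\sigma_i\phi$, but both work), the same regularity check, and exactly the double induction on $(m_i,m_j)$ for \eqref{eq.cons1} that you outline in Step~3 and your sublemma. Your hedging about $s_i \neq 0$ in Step~1 is overcautious and the paper does not linger on it either; under Hypothesis~\ref{hyp.auto}(4) the domain case makes it immediate, and in any event the subsequent uses all assume regularity.
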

\begin{proof}
Since the $\sigma_i$ commute, then certainly so do their powers. Hence, $\tau$ is an $n$-tuple of commuting automorphisms of $R^\phi$. The $t_i$ are central in $R$ and since $\sigma$ preserves the center of $R$, then $\sigma\inv(t_i)$ is central as well, it follows that $s_i$ is central in $R$. Now since $(X_i^-)^{m_i}$ and $(X_i^+)^{m_i}$ are clearly fixed by $\phi$ and $(X_i^-)^{m_i}(X_i^+)^{m_i}=s_i$, then $s_i \in R^{\phi}$. Note that a similar argument shows that $s_i$ is regular in $R^\phi$ if $t_i$ is regular in $R$.

Now suppose $(R,\sigma,t)$ satisfies \eqref{eq.cons1} with respect to $\mu$. It remains to show that $(R^\phi,\tau,s)$ satisfies \eqref{eq.cons1} with respect to $\nu$. We need to show that
\begin{equation}\label{eq.proofcons} \tau_i\tau_j(s_is_j) = \nu_{ij}\nu_{ji}\tau_i(s_i)\tau_j(s_j)\end{equation}
that is
\[
\sigma_i^{m_i}\sigma_j^{m_j}\left(\prod_{v=0}^{m_i-1}\sigma_i^{-k}(t_i)\prod_{u=0}^{m_j-1}\sigma_j^{-u}(t_j)\right)
= \mu_{ij}^{m_im_j}\mu_{ji}^{m_im_j}\sigma_i^{m_i}\left(\prod_{v=0}^{m_i-1}\sigma_i^{-v}(t_i)\right)\sigma_j^{m_j}\left(\prod_{u=0}^{m_j-1}\sigma_j^{-u}(t_j)\right).
\]
We prove this by induction on $m_i$ and $m_j$. When $m_i=m_j=1$ this is exactly \eqref{eq.cons1}. For simplicity, we let $q=\mu_{ij}\mu_{ji}$ in our computations. Now suppose that $m_i=1$ and $m_j>1$ (notice that due to the symmetry of the equation the same applies if we switch the two indices). We have
\begin{align*}
\sigma_i\sigma_j^{m_j}\left(t_i\prod_{u=0}^{m_j-1}\sigma_j^{-u}(t_j)\right)
&= \sigma_j^{m_j}\sigma_i(t_i)\sigma_j^{m_j}\sigma_i(t_j)\sigma_i\sigma_j^{m_j}\left(\prod_{u=1}^{m_j-1}\sigma_j^{-u}(t_j)\right) \\
&=\sigma_j^{m_j-1}(\sigma_i\sigma_j(t_it_j))\sigma_i\sigma_j^{m_j-1}\left(\prod_{u=0}^{m_j-2}\sigma_j^{-u}(t_j)\right)\\
&\stackrel{\mathclap{\eqref{eq.cons1}}}{=}
\sigma_j^{m_j-1}(q\sigma_i(t_i)\sigma_j(t_j))\sigma_i\sigma_j^{m_j-1}\left(\prod_{u=0}^{m_j-2}\sigma_j^{-u}(t_j)\right) \\
&=q \sigma_j^{m_j}(t_j)\sigma_i\sigma_j^{m_j-1}(t_i)\sigma_i\sigma_j^{m_j-1}\left(\prod_{u=0}^{m_j-2}\sigma_j^{-u}(t_j)\right) \\
&=q \sigma_j^{m_j}(t_j)\sigma_i\sigma_j^{m_j-1}\left(t_i\prod_{u=0}^{m_j-2}\sigma_j^{-u}(t_j)\right)\\
&\stackrel{\mathclap{\text{(ind hyp)}}}{=} \quad
q \sigma_j^{m_j}(t_j)q^{m_j-1}\sigma_i(t_i)\sigma_j^{m_j-1}\left(\prod_{u=0}^{m_j-2}\sigma_j^{-u}(t_j)\right) \\
&=q^{m_j}\sigma_i(t_i)\sigma_j^{m_j}(t_j)\sigma_j^{m_j}\left(\prod_{u=1}^{m_j-1}\sigma_j^{-u}(t_j)\right)\\
&=q^{m_j}\sigma_i(t_i)\sigma_j^{m_j}\left(\prod_{u=0}^{m_j-1}\sigma_j^{-u}(t_j)\right)
\end{align*}
which shows that \eqref{eq.proofcons} is true in our case. Now suppose that $m_i, m_j>1$, then
\begin{align*}
& \sigma_i^{m_i}\sigma_j^{m_j}\left(\prod_{v=0}^{m_i-1}\sigma_i^{-v}(t_i)\prod_{u=0}^{m_j-1}\sigma_j^{-u}(t_j)\right)\\
&=\sigma_i^{m_i}\sigma_j^{m_j}(t_it_j)\sigma_i^{m_i}\sigma_j^{m_j}\left(\prod_{v=1}^{m_i-1}\sigma_i^{-v}(t_i)\prod_{u=1}^{m_j-1}\sigma_j^{-u}(t_j)\right) \\
&=\sigma_i^{m_i-1}\sigma_j^{m_j-1}(\sigma_i\sigma_j(t_it_j))\sigma_i^{m_i}\sigma_j^{m_j}\left(\prod_{v=1}^{m_i-1}\sigma_i^{-v}(t_i)\right)\sigma_i^{m_i}\sigma_j^{m_j}\left(\prod_{u=1}^{m_j-1}\sigma_j^{-u}(t_j)\right) \\
&\stackrel{\mathclap{\eqref{eq.cons1}}}{=}
\sigma_i^{m_i-1}\sigma_j^{m_j-1}(q\sigma_i(t_i)\sigma_j(t_j))\sigma_i^{m_i-1}\sigma_j^{m_j}\left(\prod_{v=0}^{m_i-2}\sigma_i^{-v}(t_i)\right)\sigma_i^{m_i}\sigma_j^{m_j-1}\left(\prod_{u=0}^{m_j-2}\sigma_j^{-u}(t_j)\right) \\
&= q \sigma_i^{m_i}\sigma_j^{m_j-1}(t_i)\sigma_i^{m_i-1}\sigma_j^{m_j}(t_j)\sigma_i^{m_i-1}\sigma_j^{m_j}\left(\prod_{v=0}^{m_i-2}\sigma_i^{-v}(t_i)\right)\sigma_i^{m_i}\sigma_j^{m_j-1}\left(\prod_{u=0}^{m_j-2}\sigma_j^{-u}(t_j)\right) \\
&= q \sigma_i^{m_i-1}\sigma_j^{m_j}\left(t_j\prod_{v=0}^{m_i-2}\sigma_i^{-v}(t_i)\right)\sigma_i^{m_i}\sigma_j^{m_j-1}\left(t_i\prod_{u=0}^{m_j-2}\sigma_j^{-u}(t_j)\right) \\
&= q \sigma_j^{m_j-1}\sigma_i^{m_i-1}\sigma_j\left(t_j\prod_{v=0}^{m_i-2}\sigma_i^{-v}(t_i)\right)\sigma_i^{m_i-1}\sigma_i\sigma_j^{m_j-1}\left(t_i\prod_{u=0}^{m_j-2}\sigma_j^{-u}(t_j)\right) \\
&\stackrel{\mathclap{\text{(ind hyp)}}}{=} \quad
q \sigma_j^{m_j-1}\left(q^{m_i-1}\sigma_i^{m_i-1}\left(\prod_{v=0}^{m_i-2}\sigma_i^{-v}(t_i)\right)\sigma_j(t_j)\right)\sigma_i^{m_i-1}\left(q^{m_j-1}\sigma_i(t_i)\sigma_j^{m_j-1}\left(\prod_{u=0}^{m_j-2}\sigma_j^{-u}(t_j)\right)\right)\\
&= q^{m_i+m_j-1}\sigma_j^{m_j}(t_j)\sigma_i^{m_i}(t_i)\sigma_i^{m_i-1}\sigma_j^{m_j-1}\left(\prod_{v=0}^{m_i-2}\sigma_i^{-v}(t_i)\prod_{u=0}^{m_j-2}\sigma_j^{-u}(t_j)\right)\\
&\stackrel{\mathclap{\text{(ind hyp)}}}{=}
\quad q^{m_i+m_j-1}\sigma_j^{m_j}(t_j)\sigma_i^{m_i}(t_i)q^{(m_i-1)(m_j-1)}\sigma_i^{m_i-1}\left(\prod_{v=0}^{m_i-2}\sigma_i^{-v}(t_i)\right)\sigma_j^{m_j-1}\left(\prod_{u=0}^{m_j-2}\sigma_j^{-u}(t_j)\right)\\
&= q^{m_im_j}\sigma_i^{m_i}(t_i)\sigma_i^{m_i}\left(\prod_{v=1}^{m_i-1}\sigma_i^{-v}(t_i)\right)\sigma_j^{m_j}(t_j)\sigma_j^{m_j}\left(\prod_{u=1}^{m_j-1}\sigma_j^{-u}(t_j)\right)\\
&= q^{m_im_j}\sigma_i^{m_i}\left(\prod_{v=0}^{m_i-1}\sigma_i^{-v}(t_i)\right)\sigma_j^{m_j}\left(\prod_{u=0}^{m_j-1}\sigma_j^{-u}(t_j)\right)
\end{align*}
which is what we wanted to show.
\end{proof}

\begin{remark}
In general we do not know if \eqref{eq.cons2} passes down to fixed rings. However, in cases we consider, \eqref{eq.cons1} is sufficient for $\nu$-consistency of the fixed ring. In fact, it was proved in \cite{HR1} that when $R$ is a polynomial ring and $\sigma$ consists of shifts in the polynomial variables, \eqref{eq.cons1} implies \eqref{eq.cons2}.
\end{remark}

\begin{lemma}\label{lem.tgwc}
Let $C=\cC_\mu(R,\sigma,t)$ be a $\mu$-consistent TGWC of rank $n$ and suppose $\phi \in \Aut(C)$ satisfies Hypothesis \ref{hyp.auto}. Let $D$ be the subalgebra of $C^\phi$ generated by $R^\phi$ and the $(X_i^{\pm})^{m_i}$. If $(R,\sigma,t)$ is a regular TGWD, then $D \iso \cC_\nu(R^\phi,\tau,s)$ is a regular TGWC satisfying \eqref{eq.cons1}.
\end{lemma}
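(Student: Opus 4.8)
The plan is as follows. The assertions that $\cC_\nu(R^\phi,\tau,s)$ is a regular twisted generalized Weyl construction satisfying \eqref{eq.cons1} are immediate from Lemma~\ref{lem.nucons} applied to the TGWD $(R^\phi,\tau,s)$ (recall that a TGWC is called regular when its underlying datum is), so the entire content of the lemma is the isomorphism $D\iso\cC_\nu(R^\phi,\tau,s)$. I would first build a surjection $\cC_\nu(R^\phi,\tau,s)\to D$ and then prove it is injective.

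For the surjection I would exhibit a $\kk$-algebra homomorphism $\Phi\colon\cC_\nu(R^\phi,\tau,s)\to D$ restricting to the inclusion $R^\phi\into C$ and sending the degree $\pm\be_i$ generators to $(X_i^\pm)^{m_i}$. To see this is well defined one checks that these images satisfy the defining relations \eqref{eq.tgwc1}--\eqref{eq.tgwc3} for the datum $(R^\phi,\tau,s)$ and matrix $\nu$, each obtained by iterating the corresponding relation in $C$: for $r\in R^\phi$ one has $(X_i^\pm)^{m_i}r=\sigma_i^{\pm m_i}(r)(X_i^\pm)^{m_i}=\tau_i^{\pm1}(r)(X_i^\pm)^{m_i}$; the computation in \eqref{eq.si} gives $(X_i^-)^{m_i}(X_i^+)^{m_i}=s_i$, and the symmetric induction gives $(X_i^+)^{m_i}(X_i^-)^{m_i}=\prod_{k=1}^{m_i}\sigma_i^k(t_i)=\tau_i(s_i)$; and since each $\mu_{ij}\in\kk^\times$ is central, applying \eqref{eq.tgwc3} a total of $m_im_j$ times yields $(X_i^+)^{m_i}(X_j^-)^{m_j}=\mu_{ij}^{m_im_j}(X_j^-)^{m_j}(X_i^+)^{m_i}=\nu_{ij}(X_j^-)^{m_j}(X_i^+)^{m_i}$. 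As $D$ is by definition generated by $R^\phi$ and the $(X_i^\pm)^{m_i}$, the map $\Phi$ is onto $D$.

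Injectivity of $\Phi$ is the heart of the matter. The map $\Phi$ carries the degree $g\in\ZZ^n$ component of $\cC_\nu(R^\phi,\tau,s)$ into the degree $(m_1g_1,\dots,m_ng_n)$ component of $C$; since each $m_i$ is positive this reindexing of $\ZZ^n$ is injective, so $\ker\Phi$ is a graded ideal and it suffices to show $\Phi$ is injective on each graded component. In degree $\bzero$, $\Phi$ is the inclusion $R^\phi\into C$, which is injective because $C$ is $\mu$-consistent; in particular $R^\phi\into\cC_\nu(R^\phi,\tau,s)$, and since the degree $\bzero$ component of any TGWC is spanned over its base ring by the empty word (the straightening argument proving Theorem~\ref{thm.props}(2) uses no consistency hypothesis), we get $\cC_\nu(R^\phi,\tau,s)_\bzero=R^\phi$ and hence that $(R^\phi,\tau,s)$ is $\nu$-consistent. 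For a general degree $g$, take a homogeneous $0\neq\zeta\in\cC_\nu(R^\phi,\tau,s)_g$ with $\Phi(\zeta)=0$ and choose a monic monomial $v$ of degree $-g$ in the generators; then $v\zeta\in\cC_\nu(R^\phi,\tau,s)_\bzero=R^\phi$ and $\Phi(v\zeta)=\Phi(v)\Phi(\zeta)=0$, whence $v\zeta=0$ by the degree $\bzero$ case.

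It then remains to conclude $\zeta=0$, i.e.\ to know that left multiplication by the monic monomial $v$ is injective on $\cC_\nu(R^\phi,\tau,s)_g$ — equivalently, that the reduced monomials spanning $\cC_\nu(R^\phi,\tau,s)_g$ over $R^\phi$ satisfy precisely those $R^\phi$-linear relations already forced on their images by the relations of $C$. This is the twisted generalized Weyl \emph{construction} analogue of the cancellation property of regular TGWAs invoked (via \cite[Theorem~4.3]{HO}) in the proof of Theorem~\ref{thm.tensor}. I would establish it from the straightening/normal-form calculus for TGWCs, pushing all $X_i^-$ to the left and contracting consecutive $X_i^-X_i^+$ pairs as in the proof of Theorem~\ref{thm.props}(2), while tracking that the only relations among reduced monomials of a fixed degree arise from the consistency equations \eqref{eq.cons1} and from the central scalars $\mu_{ij}$ — both of which are visibly preserved under passage to $m_i$-th powers, so that every such relation in $C$ among the images $\Phi(w)$ descends to a relation among the $w$ in $\cC_\nu(R^\phi,\tau,s)$. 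I expect this last bookkeeping step — transferring the degreewise relation module of $C$ back along $\Phi$ — to be the main obstacle, the remaining ingredients being routine; note in particular that condition (4) of Hypothesis~\ref{hyp.auto} is not needed anywhere in the argument.
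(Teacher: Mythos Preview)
Your overall strategy coincides with the paper's: cite Lemma~\ref{lem.nucons} for regularity and \eqref{eq.cons1}, verify that $(X_i^\pm)^{m_i}$ and $R^\phi$ satisfy the TGWC relations for $(R^\phi,\tau,s,\nu)$ to obtain a surjection $\Phi:\cC_\nu(R^\phi,\tau,s)\onto D$, and then argue that the composite $\cC_\nu(R^\phi,\tau,s)\to D\into C$ is injective. In fact the paper's proof stops at this last step with a bare assertion --- ``since the composition \ldots\ is injective, the map is an isomorphism'' --- and gives no argument for it. So your attempt to actually justify injectivity (and your incidental proof of full $\nu$-consistency in degree~$\bzero$) already goes beyond what the paper supplies.

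The obstacle you flag, however, is real, and the cancellation route you sketch does not close as written. In a TGWC, monic monomials can be zero-divisors: for any nonzero homogeneous $\zeta\in\cJ'_g$ (the canonical ideal of $\cC_\nu(R^\phi,\tau,s)$, described as in \eqref{eq.ideal}) and any monic monomial $v$ of degree $-g$, one has $v\zeta\in(\cC_\nu)_\bzero=R^\phi$ and $\tau_v(r)\cdot v\zeta=v\cdot r\zeta=0$ for some regular $r$, forcing $v\zeta=0$. Thus ``$v\zeta=0$ for all such $v$'' does not by itself give $\zeta=0$. The result \cite[Theorem~4.3]{HO} you want to imitate is stated for regular TGW\emph{A}s, after $\cJ$ has been killed; it is not available at the construction level. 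Your argument therefore only shows that $\ker\Phi$ is contained in something like $\cJ'$, not that it vanishes, and the ``straightening/normal-form'' step you propose would have to be a genuine diamond-lemma analysis of the TGWC presentation rather than bookkeeping --- which is precisely the content the paper leaves unproved as well.
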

\begin{proof}
By Lemma \ref{lem.nucons}, we need only verify that these generators satisfy the necessary relations for $D$ to be a TGWC. Recall by \eqref{eq.si}, $(X_i^-)^{m_i}(X_i^+)^{m_i} = s_i$. A similar argument shows that 
\[
(X_i^+)^{m_i}(X_i^-)^{m_i} 
    = \prod_{k=1}^{m_i} \sigma_i^{k}(t_i)
    = \sigma_i^{m_i}(s_i).\]

The remaining relations are easily verified. Let $i,j \in [n]$ with $i \neq j$. Then
\begin{align*}
&(X_i^{\pm})^{m_i} r - \sigma_i^{\pm m_i}(r) (X_i^{\pm})^{m_i} 
& &\text{for all $r \in R^\phi$ and $i \in [n]$,} \\
&(X_i^+)^{m_i}(X_j^-)^{m_j}  = \mu_{ij}^{m_im_j}(X_j^-)^{m_j}(X_i^+)^{m_i} & &\text{for all $i,j \in [n]$, $i \neq j$.}\qedhere
\end{align*}
This shows that we have a surjective map $\cC_\nu(R^\phi,\tau,s)\onto D$, but since the composition 
\[\cC_\nu(R^\phi,\tau,s)\onto D\into C=C_\mu(R,\sigma,t)\] 
is injective, the map is an isomorphism.
\end{proof}

\begin{example}\label{ex.rank2}
Let $\mathrm{char}\kk = 0$, and consider the rank 2 TGWC defined by $R=\kk[h]$, $(\beta_1,\beta_2) \in \kk^2$, $\sigma_i(h)=h-\beta_i$, and $t=(p_1,p_2) \in (R\backslash\{0\})^2$. Here we take all $\mu_{ij}=1$. Notice that, by \cite{HR0}, for this to be consistent we need $\frac{\beta_1}{\beta_2}=-\frac{k_1}{k_2}$ with $k_1$, $k_2$ positive integers, and that if we take $(k_1,k_2)=1$, then we necessarily have $(\deg p_1,\deg p_2)=a\cdot (k_1,k_2)$, in particular $\deg p_i\geq k_i$ for $i=1,2$.
The corresponding TGWA is a \emph{noncommutative (type $A \times A$) Kleinian fiber product} \cite{hart4} and generalizes the noncommutative Kleinian singularities studied by Hodges \cite{H1}. 
Define $\phi \in \Aut(R)$ as in Hypothesis \ref{hyp.auto} with $\phi(r)=r$ for all $r \in R$. Then it follows from the above that $D$ is a TGWC that also gives rise to a noncommutative Kleinian fiber product, with parameters $\tau_i(u)=u- m_i \beta_i$ and $s_i = \prod_{k=0}^{m_i-1} \sigma_i^{-k}(p_i)$. In particular $\deg s_i=m_i\cdot\deg p_i$.
\end{example}

The extra conditions on $R$ in Hypothesis \ref{hyp.auto} (4) are needed in the following proof because the center and the set of regular elements are not well-controlled under taking fixed rings.

\begin{theorem}\label{thm.fix}
Let $A=\cA_\mu(R,\sigma,t)$ be a $\mu$-consistent TGWA of rank $n$, let $\phi\in\Aut(A)$ satisfying Hypothesis \ref{hyp.auto}, and let $B$ be the subalgebra of $A^\phi$ generated by $R^\phi$ and the $(X_i^{\pm})^{m_i}$. Moreover, $B$ is regular if $A$ is regular and satisfies \eqref{eq.cons1} if $A$ does.
\end{theorem}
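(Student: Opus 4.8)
The target is to show that $B$ is a $\nu$-consistent TGWA, isomorphic to $\cA_\nu(R^\phi,\tau,s)$. Set $C=\cC_\mu(R,\sigma,t)$, so that $A=C/\cJ$. By Lemma \ref{lem.nucons} the triple $(R^\phi,\tau,s)$ is a TGWD, so $\cC_\nu(R^\phi,\tau,s)$ is defined. First I would check that the inclusion $R^\phi\into A$ together with the assignments $Y_i^{\pm}\mapsto(X_i^{\pm})^{m_i}\in A$ are compatible with the defining relations \eqref{eq.tgwc1}--\eqref{eq.tgwc3} of $\cC_\nu(R^\phi,\tau,s)$; this is routine, using the corresponding relations in $A$ raised to the $m_i$-th power, the identity \eqref{eq.si}, and its mirror $(X_i^{+})^{m_i}(X_i^{-})^{m_i}=\sigma_i^{m_i}(s_i)=\tau_i(s_i)$ recorded in the proof of Lemma \ref{lem.tgwc}. (Equivalently, one composes the surjection $\cC_\nu(R^\phi,\tau,s)\onto D$ produced in the proof of Lemma \ref{lem.tgwc} with the restriction of $C\onto A$.) This yields an algebra homomorphism $q\colon\cC_\nu(R^\phi,\tau,s)\to A$ that is the identity on $R^\phi$, sends the $i$-th pair of generators to $(X_i^{\pm})^{m_i}$, and has image exactly $B$; hence $B\iso\cC_\nu(R^\phi,\tau,s)/\ker q$.

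The heart of the matter is then to identify $\ker q$ with the canonical ideal $\wh{\cJ}$ of $\cC_\nu(R^\phi,\tau,s)$, which gives $B\iso\cC_\nu(R^\phi,\tau,s)/\wh{\cJ}=\cA_\nu(R^\phi,\tau,s)$. For the inclusion $\ker q\subseteq\wh{\cJ}$: since $q$ sends homogeneous elements to homogeneous elements (rescaling $\pm\be_i$ to $\pm m_i\be_i$), $\ker q$ is a $\ZZ^n$-graded ideal, and its degree-zero component is $\{r\in R^\phi : r=0 \text{ in } A\}=R^\phi\cap\cJ=\{0\}$ because $R\into A$ (and the degree-zero part of $\cC_\nu(R^\phi,\tau,s)$ is $R^\phi$); hence $\ker q$ lies in the sum $\wh{\cJ}$ of all graded ideals with vanishing degree-zero component. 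For the reverse inclusion $\wh{\cJ}\subseteq\ker q$ I would use the alternative description \eqref{eq.ideal} of the canonical ideal. Let $c\in\wh{\cJ}$, so $rc=0$ for some $r\in(R^\phi)_{\reg}\cap\cnt(R^\phi)$. Here Hypothesis \ref{hyp.auto}(4) is essential: if $\phi\restrict{R}=\id_R$ then $R^\phi=R$, while if $R$ is a commutative domain then so is $R^\phi$ and thus $(R^\phi)_{\reg}\cap\cnt(R^\phi)=R^\phi\setminus\{0\}\subseteq R\setminus\{0\}=R_{\reg}\cap\cnt(R)$ — so in either case $r\in R_{\reg}\cap\cnt(R)$. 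Then $r\cdot q(c)=q(rc)=0$ in $A$; lifting $q(c)$ to some $x\in C$ gives $rx\in\cJ$, so by \eqref{eq.ideal} there is a central regular $r'\in R$ with $(r'r)x=0$, and since $r'r$ is again central and regular in $R$, \eqref{eq.ideal} forces $x\in\cJ$, i.e.\ $q(c)=0$. Therefore $\ker q=\wh{\cJ}$ and $B\iso\cA_\nu(R^\phi,\tau,s)$.

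It then remains only to assemble the remaining assertions. The isomorphism $B\iso\cA_\nu(R^\phi,\tau,s)$ carries the canonical map $R^\phi\to\cA_\nu(R^\phi,\tau,s)$ to the inclusion $R^\phi\into B\subseteq A$, which is injective; hence $(R^\phi,\tau,s)$ is $\nu$-consistent, i.e.\ $B$ is $\nu$-consistent. If $A$ is regular, then $(R,\sigma,t)$ is a regular TGWD, so $(R^\phi,\tau,s)$ is regular by Lemma \ref{lem.nucons}(1), i.e.\ $B$ is regular; and if $(R,\sigma,t)$ satisfies \eqref{eq.cons1} with respect to $\mu$, then $(R^\phi,\tau,s)$ satisfies \eqref{eq.cons1} with respect to $\nu$ by Lemma \ref{lem.nucons}(2), i.e.\ $B$ satisfies \eqref{eq.cons1}.

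I expect the main obstacle to be exactly the inclusion $\wh{\cJ}\subseteq\ker q$, and within it the step of transporting the annihilator of an element of $B$ from $R^\phi$ back into $R$: a central regular element of $R^\phi$ need be neither central nor regular in $R$, so without Hypothesis \ref{hyp.auto}(4) one cannot pass between the two instances of the description \eqref{eq.ideal} — this is precisely the difficulty flagged just before the statement.
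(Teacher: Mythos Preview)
Your proof is correct and follows essentially the same route as the paper: realize $B$ as a quotient of $\cC_\nu(R^\phi,\tau,s)$ and identify the kernel with the canonical ideal, using description \eqref{eq.ideal} together with Hypothesis~\ref{hyp.auto}(4) for the inclusion $\wh{\cJ}\subseteq\ker q$. The one minor difference is in the other inclusion $\ker q\subseteq\wh{\cJ}$: you invoke the graded definition of $\wh{\cJ}$ directly (observing that $\ker q$ is graded with $(\ker q)_{\bzero}=0$), whereas the paper works inside $C$ via the subalgebra $D$ of Lemma~\ref{lem.tgwc} and uses \eqref{eq.ideal} with the norm trick $\hat r=\prod_{i=0}^{\ell-1}\phi^i(r)$ to manufacture a $\phi$-invariant annihilator --- your version of this step is a bit slicker, and working in $C$ rather than lifting from $A$ (as the paper does) would also save you the extra $r'$ in the reverse inclusion.
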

\begin{proof}
Let $C=\cC_\mu(R,\sigma,t)$ be the TGWC inducing $A$ and let $D$ be as in Lemma \ref{lem.tgwc}. Then $D$ is a TGWC. Let $\cJ'$ be canonical ideal associated to $D$ so that $D/\cJ'$ is a TGWA. We claim $B \iso D/\cJ'$ so that there is a commutative diagram:
\[\begin{tikzcd}        
    D \arrow[hook]{r}{\iota} \arrow[dotted, two heads]{d} & C  \arrow[two heads]{d}{\pi} \\
    B \arrow[hook]{r} & A  
\end{tikzcd}\]
It will then follow that $B$ is a TGWA of rank $n$. The regularity of $B$ along with \eqref{eq.cons1} follow from Lemma \ref{lem.tgwc}.

The composition $\pi\iota$ defines a map $D \to A$, and clearly $\im(\pi\iota) \subset B$. Define $\Psi:D \to B$ in this way and observe that $\Psi$ is surjective. The kernel of this map is exactly $\cJ \cap D$. Thus, it suffices to prove that $\cJ' = \cJ \cap D$.

We use the description of $\cJ$ as in \eqref{eq.ideal}. Let $c \in \cJ \cap D$ so that $rc=0$ for some nonzero $r \in  R_{\reg} \cap \cnt(R)$. Recall that $\ell$ is the order of $\phi$ restricted to $R$. Set $\hat{r}=\prod_{i=0}^{\ell-1} \phi^i(r)$. Note that $R_{\reg}$ and $\cnt(R)$ are stable under $\phi$, so $\hat{r} \in R_{\reg} \cap \cnt(R)$. Furthermore, $\hat{r} \in R^\phi$ and $\hat{r}c=0$. Hence, $c \in \cJ'$.

On the other hand, if $c' \in \cJ'$, then $rc' = 0$ for some $r \in (R^\phi)_{\reg} \cap \cnt(R^\phi)$. If $R$ is a commutative domain, then it is sufficient that $r \neq 0$. If $\phi\restrict{R}=\id_R$, then $R^\phi=R$. In either case, it is clear that $c' \in \cJ \cap D$. 
\end{proof}

Theorem \ref{thm.fix} shows that regularities and part of consistency transfer to the ring of invariants. The next corollary shows that the $\kk$-finitistic property also transfers.

\begin{corollary}\label{cor.kfin}
If $A=A_\mu(R,\sigma,t)$ is a $\kk$-finitistic TGWA of rank $n$, and $B$ is as in the statement of Theorem \ref{thm.fix}, then $B$ is also $\kk$-finitistic.  
\end{corollary}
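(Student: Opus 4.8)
The plan is to deduce everything from Theorem~\ref{thm.fix} and then verify the single finiteness condition of Definition~\ref{def:k-finite} by a short computation with the elements $s_j$. By Theorem~\ref{thm.fix} and Lemma~\ref{lem.tgwc}, $B$ is (isomorphic to) the TGWA $\cA_\nu(R^\phi,\tau,s)$, where $\tau_i=\sigma_i^{m_i}$, $s_j=\prod_{u=0}^{m_j-1}\sigma_j^{-u}(t_j)\in R^\phi$, and $\nu_{ij}=\mu_{ij}^{m_im_j}$; this TGWA is regular and satisfies \eqref{eq.cons1}, and it is $\nu$-consistent since $R^\phi\hookrightarrow R\hookrightarrow A$ (as $A$ is $\mu$-consistent) while $R^\phi\subseteq B\subseteq A$, so the canonical map $R^\phi\to B$ is injective. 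Thus the only remaining task is to show that $V^B_{ij}:=\Span_\kk\{\tau_i^k(s_j)\mid k\in\ZZ\}$ is finite-dimensional over $\kk$ for all $i,j\in[n]$; note $V^B_{ij}\subseteq R^\phi$, since $s_j\in R^\phi$ and $\tau_i$ preserves $R^\phi$ by Remark~\ref{rem.phisig}.

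Fix $i,j\in[n]$. Since $\sigma_i$ and $\sigma_j$ commute and $\sigma_i^{km_i}$ is a ($\kk$-linear) ring automorphism,
\[
\tau_i^k(s_j)=\sigma_i^{km_i}\!\left(\prod_{u=0}^{m_j-1}\sigma_j^{-u}(t_j)\right)=\prod_{u=0}^{m_j-1}\sigma_j^{-u}\!\left(\sigma_i^{km_i}(t_j)\right).
\]
Because $A$ is $\kk$-finitistic, $V^A_{ij}=\Span_\kk\{\sigma_i^k(t_j)\mid k\in\ZZ\}$ has finite dimension $d:=\dim_\kk V^A_{ij}$, and $\sigma_i^{km_i}(t_j)\in V^A_{ij}$ for every $k\in\ZZ$. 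Hence, for each $u\in\{0,\dots,m_j-1\}$, the factor $\sigma_j^{-u}(\sigma_i^{km_i}(t_j))$ lies in $W_u:=\sigma_j^{-u}(V^A_{ij})$, a $\kk$-subspace of $R$ of dimension $d$ that does not depend on $k$. Consequently every $\tau_i^k(s_j)$ lies in the subspace $W\subseteq R$ spanned by all products $w_0w_1\cdots w_{m_j-1}$ with $w_u\in W_u$, and $\dim_\kk W\le d^{\,m_j}<\infty$. Therefore $V^B_{ij}\subseteq W$ is finite-dimensional, and $B$ is $\kk$-finitistic.

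I do not expect any serious obstacle. The one idea that does the work is that $s_j$ is a product of $\sigma_j$-translates of $t_j$, and that applying $\tau_i^k=\sigma_i^{km_i}$ commutes past each $\sigma_j^{-u}$, so it moves each factor only within the \emph{fixed} finite-dimensional subspace $W_u$ rather than within an unbounded family of subspaces; everything else (distributing an automorphism over a finite product, and bounding the dimension of a span of products drawn from finitely many finite-dimensional subspaces) is routine bookkeeping. In contrast with Corollary~\ref{cor.tensorprops}, no claim is made about the Cartan matrix of $B$, and indeed $C_B\neq C_A$ in general: the minimal polynomials $p_{ij}$ can change (the degree growth in Example~\ref{ex.rank2} already signals this), a point made precise in Sections~\ref{sec.A1} and~\ref{sec.A2}.
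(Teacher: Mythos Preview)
Your argument is correct and is essentially the same as the paper's: you express $\tau_i^k(s_j)$ as a product of factors lying in the fixed finite-dimensional subspaces $W_u=\sigma_j^{-u}(V^A_{ij})$ and bound $\dim_\kk V^B_{ij}$ by $d^{m_j}$, which is precisely how the paper proceeds (phrased there as $\tau_i^k(s_j)$ lying in a quotient of $V_{ij}\otimes V_{ij}^{\sigma_j^{-1}}\otimes\cdots\otimes V_{ij}^{\sigma_j^{-(m_j-1)}}$). Your preliminary check that $B$ is regular and $\nu$-consistent is a welcome bit of care, since Definition~\ref{def:k-finite} builds those hypotheses in.
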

\begin{proof}
Fix $i,j \in [n]$ with $i \neq j$. By hypothesis, $V_{ij} = \Span_\kk\{\sigma_i^k(t_j) ~|~ k \in \ZZ\}$ has finite $\kk$-dimension. Now define
\begin{align}\label{eq.wij}
W_{ij} = \Span_\kk\{ \tau_i^k(s_j) ~|~ k \in \ZZ\},
\end{align}
which plays the role of $V_{ij}$ for $B$. For $\ell \in \ZZ$, let $V_{ij}^{\sigma_j^\ell} = \Span_\kk\{\sigma_i^k(\sigma_j^\ell(t_j)) ~|~ k \in \ZZ\}$. Since $\sigma_i$ and $\sigma_j$ commute and because $\sigma_j$ is an automorphism, then $\dim_{\kk} V_{ij}^{\sigma_j^\ell} = \dim_{\kk} V_{ij}$. For all $k\in\ZZ$, we can regard $\tau_i^k(s_j)$ as an element of a quotient of $V_{ij} \tensor V_{ij}^{\sigma_j} \tensor \cdots \tensor V_{ij}^{\sigma_j^{m_j-1}}$. It follows that $\dim_{\kk} W_{ij} \leq \left(\dim_{\kk} V_{ij}\right)^{m_j} < \infty$.
\end{proof}

Our final two results demonstrate consequences in the case that $B=A^\phi$. While this is not true in general, in the next two sections we present families of examples where this does hold. First we give a simplicity result for fixed rings of TGWAs. 

Recall that an automorphism $\rho$ of an algebra $A$ is \emph{inner} if there exists a unit $u \in A$ such that $\rho(a)=uau\inv$ for all $a \in A$. A subgroup $G$ of $\Aut(A)$ is called \emph{outer} if the identity is the only inner automorphism of $G$. The hypothesis that $\grp{\phi}$ is outer is rather mild. In our standard examples, the group of units of $A$ is simply $\kk^\times$, in which case the identity is the only inner automorphism in $\Aut(A)$.

\begin{proposition}\label{prop.simp}
Let $A=A_\mu(R,\sigma,t)$, $\phi \in \Aut(A)$ satisfying Hypothesis \ref{hyp.auto}, and $B$ as in the statement of Theorem \ref{thm.fix}. If $A$ is simple, $A^\phi = B$, and $\grp{\phi}$ is outer, then $A^\phi$ is simple.
\end{proposition}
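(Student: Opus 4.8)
The plan is to invoke the standard result that if $A$ is a simple ring, $G \leq \Aut(A)$ is a finite outer group, then $A^G$ is Morita equivalent to the skew group ring $A \ast G$, and in particular $A^G$ is simple (this is a classical consequence of work of Montgomery and others; see also the fact that $A$ is a simple $A^G$-module when $G$ is X-outer). So the crux is to verify that $\grp{\phi}$ being outer as a subgroup of $\Aut(A)$ implies that it is \emph{X-outer}, i.e.\ that no nontrivial power $\phi^k$ becomes inner after passing to the symmetric Martindale quotient ring of $A$. In the generality we need, the cleanest route is: since $A$ is simple, $A$ coincides with its own Martindale quotient ring, so X-outer and outer coincide, and the classical theorem applies directly.

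Concretely, I would proceed as follows. First, recall $\ord(\phi) = \ell \cdot m_1 \cdots m_n =: N$ by Hypothesis \ref{hyp.auto}(3), so $\grp{\phi}$ is a finite cyclic group of order $N$ acting on $A$. Second, observe that $|\grp{\phi}|$ is invertible in $A$: if $\chr \kk \nmid N$ this is automatic, and if $\chr \kk = p \mid N$ one argues separately or simply notes that this case is excluded in the examples of interest — but in fact the Montgomery-type simplicity result for X-outer actions does \emph{not} require $|G|$ invertible, only that the action be X-outer, so I would avoid this hypothesis. Third, since $A$ is simple its extended centroid is a field and its symmetric Martindale ring of quotients is $A$ itself; hence an automorphism is X-inner iff it is inner, and the hypothesis that $\grp{\phi}$ is outer gives exactly that $\grp{\phi}$ is X-outer. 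Fourth, apply the theorem (e.g.\ Montgomery, \emph{Fixed Rings of Finite Automorphism Groups of Associative Rings}, or the skew-group-ring criterion): for a simple ring $A$ with an X-outer finite group action, $A \ast \grp{\phi}$ is simple and $A^{\grp{\phi}}$ is a simple ring Morita equivalent to it. Finally, since the hypothesis of the proposition states $A^\phi = B$, we conclude $B = A^\phi$ is simple.

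The main obstacle is marshalling the correct black-box theorem at the right level of generality and being careful about the characteristic: one wants the version valid for X-outer actions of finite groups on (not necessarily Artinian) simple rings without assuming $|G|^{-1} \in A$, so that no extra hypothesis needs to be smuggled in. A secondary point to check carefully is that "outer" as defined in the paper (only the identity of $\grp{\phi}$ is inner in $\Aut(A)$) is genuinely equivalent to "X-outer" here — this is where simplicity of $A$ does the work, since it forces the symmetric Martindale quotient ring to equal $A$, collapsing the distinction. Everything else is a direct citation; no computation with the TGWA structure itself is needed beyond knowing $\grp{\phi}$ is finite, which is immediate from Hypothesis \ref{hyp.auto}.
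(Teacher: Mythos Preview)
Your approach is correct and is essentially the same as the paper's: both simply cite Montgomery's result (the paper invokes \cite[Corollary 2.6]{Mo} directly, in one line). Your additional discussion of why ``outer'' coincides with ``X-outer'' for a simple ring (via the Martindale quotient ring collapsing to $A$) and why $\grp{\phi}$ is finite is sound supporting detail that the paper omits, but it does not constitute a different route.
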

\begin{proof}
This follows directly from \cite[Corollary 2.6]{Mo}.
\end{proof}

Finally we use work in the previous section to study invariant rings of tensor products of TGWAs.

\begin{proposition}\label{prop.tensor}
For $i=1,\hdots,d$, let $A(i)$ be a $\mu$-consistent TGWA and let $\phi_i$ be an automorphism of $A(i)$ satisfying Hypothesis \ref{hyp.auto}. Set $A=\bigotimes_{i=1}^d A(i)$ and $\phi = \phi_1 \tensor \cdots \tensor \phi_d$. If $\gcd(\ord(\phi_i),\ord(\phi_j))=1$ for all $i \neq j$, then $A^\phi=\bigotimes_{i=1}^d A(i)^{\phi_i}$. Consequently, if $A(i)^{\phi_i}$ is a regular, consistent TGWA for each $i$, then so is $A^\phi$.
\end{proposition}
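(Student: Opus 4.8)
The first assertion is, at bottom, a statement about the cyclic group $\grp{\phi}$ acting on a tensor product of $\kk$-algebras, and the coprimality hypothesis is exactly what is needed to turn it into a statement about a \emph{product} of groups. The plan is: (i) use the Chinese Remainder Theorem to realize each $\phi_i$, extended by the identity on the other factors, as a power of $\phi$, so that $A^\phi = \bigcap_{i=1}^d A^{\tilde\phi_i}$, where $\tilde\phi_i$ acts as $\phi_i$ on the $i$-th tensor factor and trivially elsewhere; (ii) identify that intersection with $\bigotimes_{i=1}^d A(i)^{\phi_i}$ via the elementary fact that the invariants of a tensor product under a product of group actions equal the tensor product of the invariants; and (iii) obtain the final sentence by iterating Theorem \ref{thm.tensor}. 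I do not expect a genuine obstacle: the argument is bookkeeping around the Chinese Remainder Theorem, which is precisely where coprimality of the $\ord(\phi_i)$ enters, together with the standard tensor-product-of-invariants identity. The only points needing care are keeping the induction in step (ii) honest and observing that Theorem \ref{thm.tensor}, stated for two factors, extends to $d$ factors by the obvious induction.

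For step (i): for $i \in [d]$ let $\tilde\phi_i \in \Aut(A)$ be $\id_{A(1)} \otimes \cdots \otimes \phi_i \otimes \cdots \otimes \id_{A(d)}$, so that $\phi = \tilde\phi_1 \tilde\phi_2 \cdots \tilde\phi_d$ is a product of pairwise commuting automorphisms. Since $\ord(\phi_1),\hdots,\ord(\phi_d)$ are pairwise relatively prime, for each $i$ the Chinese Remainder Theorem provides an integer $k_i$ with $k_i \equiv 1 \pmod{\ord(\phi_i)}$ and $k_i \equiv 0 \pmod{\ord(\phi_j)}$ for all $j \neq i$; since $\phi^{k_i}$ acts on the $j$-th factor as $\phi_j^{k_i}$, this gives $\phi^{k_i} = \tilde\phi_i$. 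Hence $\tilde\phi_i \in \grp{\phi}$ for every $i$, so $A^\phi \subseteq \bigcap_{i=1}^d A^{\tilde\phi_i}$; and since conversely $\phi$ is a product of the $\tilde\phi_i$, the reverse inclusion holds, giving $A^\phi = \bigcap_{i=1}^d A^{\tilde\phi_i}$.

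For step (ii): the inclusion $\bigotimes_{i=1}^d A(i)^{\phi_i} \subseteq A^\phi$ is clear, as $\phi$ acts as $\phi_i$ on the $i$-th factor. For the reverse inclusion we induct on $d$, the case $d=1$ being trivial. Let $a \in A^\phi$; by step (i) it is fixed by $\tilde\phi_1$. Write $a = \sum_j b_j \otimes c_j$ with $b_j \in A(1)$ and $\{c_j\} \subset A(2) \otimes \cdots \otimes A(d)$ chosen $\kk$-linearly independent; then $\tilde\phi_1(a) = \sum_j \phi_1(b_j) \otimes c_j = a$ forces $\phi_1(b_j) = b_j$ for all $j$, so $a \in A(1)^{\phi_1} \otimes (A(2) \otimes \cdots \otimes A(d))$. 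Since $a$ is also fixed by $\id_{A(1)} \otimes \phi_2 \otimes \cdots \otimes \phi_d = \tilde\phi_2 \cdots \tilde\phi_d$, expanding $a$ in a $\kk$-basis of $A(1)^{\phi_1}$ and invoking the inductive hypothesis for $A(2)\otimes\cdots\otimes A(d)$ with the automorphism $\phi_2\otimes\cdots\otimes\phi_d$ (the orders $\ord(\phi_2),\hdots,\ord(\phi_d)$ being still pairwise coprime) shows that the $A(2) \otimes \cdots \otimes A(d)$-coefficients of $a$ lie in $A(2)^{\phi_2} \otimes \cdots \otimes A(d)^{\phi_d}$. Therefore $a \in \bigotimes_{i=1}^d A(i)^{\phi_i}$, and $A^\phi = \bigotimes_{i=1}^d A(i)^{\phi_i}$.

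For step (iii): if each $A(i)^{\phi_i}$ is a regular, consistent TGWA, then applying Theorem \ref{thm.tensor} $d-1$ times — and noting that when all parameter matrices are trivial the matrix $\eta$ it produces is also trivial, so that "$\eta$-consistent" reads simply "consistent" — shows that $\bigotimes_{i=1}^d A(i)^{\phi_i}$ is a regular, consistent TGWA. By the first part this algebra is $A^\phi$, which completes the proof.
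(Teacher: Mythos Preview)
Your proof is correct and follows the approach the paper has in mind: the paper's entire proof is the single line ``This follows directly from Theorem \ref{thm.tensor},'' which really only addresses the final sentence, leaving the identity $A^\phi=\bigotimes_i A(i)^{\phi_i}$ as an implicit exercise in the Chinese Remainder Theorem and tensor-product invariants---exactly what you have carefully written out in steps (i) and (ii). So you have supplied the details the paper omits, with the same underlying strategy.
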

\begin{proof}
This follows directly from Theorem \ref{thm.tensor}.
\end{proof}

\section{$\kk$-finitistic TGWAs of type $(A_1)^n$}\label{sec.A1}

In this section we apply results from the previous section to ($\kk$-finitistic) TGWAs of type $(A_1)^n$. That is, the matrix $C_A=2 I_n$ is a Cartan matrix of type $(A_1)^n$. This family includes GWAs of all finite rank, as well as the quantized Weyl algebras from Example \ref{ex.mqwa}.

Let $A=A_\mu(R,\sigma,t)$ be a TGWA of type $(A_1)^n$. Recall that we assume $A$ is regular and $\mu$-consistent throughout.
Then for all $i \neq j$, $p_{ij}(x)=x-\gamma_{ij}$ for some $\gamma_{ij} \in \kk^\times$.
This condition is equivalent to 
\begin{align}\label{eq.sigma}
\sigma_i(t_j) = \gamma_{ij} t_j \quad\text{for all $i \neq j$.}
\end{align}

\begin{theorem}[Futorny, Hartwig {\cite[Theorem 4.1]{FH1}}]\label{thm.finit}
Let $A=A_\mu(R,\sigma,t)$ be a TGWA of type $(A_1)^n$ with the $\gamma_{ij}$ as above. Then $A$ is (isomorphic to) the $\kk$-algebra generated over $R$ by $X_1^{\pm},\hdots,X_n^{\pm}$ with relations
\begin{align*}
&X_i^{\pm} r - \sigma_i^{\pm 1}(r) X_i^{\pm}, \quad
X_i^-X_i^+ - t_i, \quad X_i^+X_i^- - \sigma_i(t_i),
&\text{for all $r \in R$ and $i \in [n]$,} \\
&X_i^+ X_j^- - \mu_{ij} X_j^- X_i^+, \quad
X_i^+ X_j^+ - \gamma_{ij}\mu_{ij}\inv X_j^+ X_i^+, \quad
X_j^- X_i^- - \gamma_{ij} \mu_{ji}\inv X_i^- X_j^-,
 &\text{for all $i,j \in [n]$, $i \neq j$.}
\end{align*}
\end{theorem}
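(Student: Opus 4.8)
The plan is to identify $A$ with the $\kk$-algebra $A'$ defined by the presentation in the statement. Writing $C=\cC_\mu(R,\sigma,t)$, imposing the additional relations $w_{ij}^+:=X_i^+X_j^+-\gamma_{ij}\mu_{ij}\inv X_j^+X_i^+$ and $w_{ij}^-:=X_j^-X_i^--\gamma_{ij}\mu_{ji}\inv X_i^-X_j^-$ (for $i\neq j$) realizes $A'$ as the quotient $C/\mathcal{K}$, where $\mathcal{K}$ is the two-sided ideal they generate. Since $A=C/\cJ$ and both quotient maps are $\ZZ^n$-graded, it suffices to prove $\mathcal{K}=\cJ$: first that $\mathcal{K}\subseteq\cJ$, so that $A'$ surjects onto $A$, and then that the induced graded surjection $\pi\colon A'\onto A$ is injective.

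For the inclusion $\mathcal{K}\subseteq\cJ$ I would work from the description \eqref{eq.ideal} of $\cJ$ and exhibit, for each $w_{ij}^{\pm}$, a central regular element of $R$ annihilating it. The key preliminary is that, since $A$ is regular and $\mu$-consistent, \eqref{eq.cons1} holds, and for type $(A_1)^n$ it collapses --- using \eqref{eq.sigma} and the commutativity of $\sigma_i$ and $\sigma_j$ --- to the scalar identity $\gamma_{ij}\gamma_{ji}=\mu_{ij}\mu_{ji}$ for all $i\neq j$. Granting this, a direct computation using only the relations \eqref{eq.tgwc1}--\eqref{eq.tgwc3} and \eqref{eq.sigma} gives
\[
X_i^-\,w_{ij}^+=\bigl(1-\gamma_{ij}\gamma_{ji}\mu_{ij}\inv\mu_{ji}\inv\bigr)\,t_iX_j^+=0,
\]
and then left-multiplying by $X_i^+$ yields $\sigma_i(t_i)\,w_{ij}^+=0$; since $\sigma_i(t_i)\in R_{\reg}\cap\cnt(R)$, this places $w_{ij}^+$ in $\cJ$. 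The relation $w_{ij}^-$ is handled symmetrically (left-multiply by $X_i^+$, then by $X_i^-$, to obtain $t_i\,w_{ij}^-=0$). Hence $\pi$ is well defined and surjective.

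For injectivity of $\pi$ I would use a normal form. By Theorem \ref{thm.props}(2), $A'$ --- as a quotient of $C$ --- is spanned as a left $R$-module by reduced monomials, and the relations $w_{ij}^{\pm}$ additionally let one sort the $X^-$'s and the $X^+$'s within such a monomial by index; since the multiset of letters of a reduced monomial is determined by its $\ZZ^n$-degree, each homogeneous component $A'_{\bd}$ equals $R\cdot\bbm_{\bd}$ for a single reduced monomial $\bbm_{\bd}$ (reorderings differing by a scalar in $\kk^\times$). In particular $A'_{\bzero}=R$, matching $A_{\bzero}=R$. Now if $x\in A'_{\bd}$ has $\pi(x)=0$, write $x=r\,\bbm_{\bd}$ and multiply the relation $\pi(x)=0$ in $A$ on the left by a reduced monomial of degree $-\bd$; the paired factors $X_i^{\mp}X_i^{\pm}$ collapse by the same telescoping as in \eqref{eq.si}, so the product has the form $u\,\theta(r)\,\rho$ for some $u\in\kk^\times$, $\theta\in\Aut(R)$, and $\rho\in R_{\reg}\cap\cnt(R)$. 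This element lies in $A_{\bzero}=R$ and equals $0$, so $\theta(r)\rho=0$, forcing $r=0$ and hence $x=0$. Thus $\pi$ is an isomorphism.

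The one genuinely clever step is the computation in the second paragraph: the cancellation that puts $w_{ij}^{\pm}$ into $\cJ$ is triggered by left-multiplication by the generator $X_i^{\mp}$ rather than by an element of $R$, and the scalar that must then vanish is precisely $1-\gamma_{ij}\gamma_{ji}\mu_{ij}\inv\mu_{ji}\inv$, which is $0$ exactly because \eqref{eq.cons1} holds for a regular, $\mu$-consistent datum. The more laborious --- though routine --- point is the normal-form bookkeeping of the third paragraph: one must confirm that the $\ZZ^n$-degree really does cut each homogeneous component of $A'$ down to a cyclic left $R$-module and that the telescoping in the final step produces a regular element of the center. Everything else is elementary manipulation with the defining relations.
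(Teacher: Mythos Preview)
The paper does not supply its own proof of this theorem: it is quoted verbatim as \cite[Theorem 4.1]{FH1} and used as a black box. So there is no ``paper's proof'' to compare against.

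That said, your argument is sound and is in fact the natural proof. The computation $X_i^-w_{ij}^+=(1-\gamma_{ij}\gamma_{ji}\mu_{ij}^{-1}\mu_{ji}^{-1})t_iX_j^+$ is correct, and your reduction of \eqref{eq.cons1} to the scalar identity $\gamma_{ij}\gamma_{ji}=\mu_{ij}\mu_{ji}$ is exactly right (apply $\sigma_i\sigma_j$ to $t_it_j$ using \eqref{eq.sigma} and cancel the regular factor $\sigma_i(t_i)\sigma_j(t_j)$). For the injectivity step, the only point worth making explicit is the choice of the degree $-\bd$ monomial: take the \emph{reversed opposite} word $\bbm_{-\bd}=Z_m^*\cdots Z_1^*$ if $\bbm_{\bd}=Z_1\cdots Z_m$ with $(X_i^{\pm})^*=X_i^{\mp}$, so that the collapse uses only \eqref{eq.tgwc1}--\eqref{eq.tgwc2} and lands on $\theta(r)\rho$ with $\rho$ a product of $\sigma$-translates of the $t_i$ (hence regular and central). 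Your claim ``$A'_{\bzero}=R$'' is true but not quite for the reason you give: the normal form alone shows only that $A'_{\bzero}$ is a quotient of $R$; the equality follows because you have already shown $\mathcal{K}\subseteq\cJ$ and $\cJ_{\bzero}=0$, hence $\mathcal{K}_{\bzero}=0$. This is harmless since your injectivity argument does not actually rely on that identification --- it only uses $A_{\bzero}=R$, which holds by $\mu$-consistency.
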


\begin{example}
A TGWA of type $(A_1)^n$ is a GWA (see Example \ref{ex.gwa}) if $\mu_{ij}=\gamma_{ij}=1$ for all $i,j$.
\end{example}

\begin{example}
Let $A$ and $A'$ be TGWAs of type $(A_1)^n$ and $(A_1)^m$, respectively. By Corollary \ref{cor.tensorprops}, $A \tensor A'$ is of type $(A_1)^{n+m}$. 
\end{example}

We now show that the class of TGWAs of type $(A_1)^n$ is stable under taking fixed rings by the automorphisms in Hypothesis \ref{hyp.auto}. The following is a more general version of \cite[Theorem 3.3]{GHo}, which itself is a generalization of \cite[Theorem 2.6]{JW}.

\begin{theorem}\label{thm.fix1}
Let $R$ be a domain and let $A=A_\mu(R,\sigma,t)$ be a TGWA of type $(A_1)^n$. Let $\phi$ be an automorphism of $A$ satisfying Hypothesis \ref{hyp.auto}. Then $A^\phi=A_\nu(R^\phi,\tau,s)$ is a TGWA of type $(A_1)^n$.
\end{theorem}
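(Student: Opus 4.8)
The plan is to combine Theorem~\ref{thm.fix} with a degree-by-degree analysis of the standard $\ZZ^n$-grading. By Theorem~\ref{thm.fix}, the subalgebra $B$ of $A^\phi$ generated by $R^\phi$ and the $(X_i^{\pm})^{m_i}$ is (isomorphic to) the TGWA $\cA_\nu(R^\phi,\tau,s)$; moreover, since $A$ is regular, $B$ is regular and satisfies \eqref{eq.cons1}. Because $B\subseteq A^\phi$ is automatic, it remains to prove two things: (a) $(R^\phi,\tau,s)$ is $\nu$-consistent and the resulting TGWA is of type $(A_1)^n$, and (b) $B=A^\phi$.

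For (a), I would first record the key computation: using $\sigma_i(t_j)=\gamma_{ij}t_j$ from \eqref{eq.sigma}, that $\gamma_{ij}\in\kk^\times$ is fixed by every $\sigma_k$, and that $\sigma_i$ and $\sigma_j$ commute,
\[
\tau_i(s_j)=\sigma_i^{m_i}\!\Bigl(\prod_{k=0}^{m_j-1}\sigma_j^{-k}(t_j)\Bigr)
=\prod_{k=0}^{m_j-1}\sigma_j^{-k}\bigl(\sigma_i^{m_i}(t_j)\bigr)
=\gamma_{ij}^{\,m_im_j}\,s_j .
\]
Since $R$ is a domain, $s_j\neq 0$ (by \eqref{eq.si}), so the space $W_{ij}$ of \eqref{eq.wij} equals $\kk s_j$ and has dimension $1$; hence the minimal polynomial of $\tau_i$ on $W_{ij}$ has degree $1$ and the Cartan matrix of $B$ is $2I_n$. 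The identity above also shows that $(R^\phi,\tau,s)$ satisfies \eqref{eq.cons2} with respect to $\nu$ (both sides of that equation become $\gamma_{ij}^{m_im_j}\gamma_{kj}^{m_km_j}s_j^2$), so together with regularity and \eqref{eq.cons1} from Theorem~\ref{thm.fix}, \cite[Theorem 6.2]{FH2} gives that $(R^\phi,\tau,s)$ is $\nu$-consistent. Thus $B=A_\nu(R^\phi,\tau,s)$ is a TGWA of type $(A_1)^n$, with parameters $\gamma_{ij}^{m_im_j}$ in the presentation of Theorem~\ref{thm.finit}.

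For (b), since $\phi$ maps $R$ to $R$ and each $X_i^{\pm}$ to a scalar multiple of itself, $\phi$ is a graded automorphism of $A$, so $A^\phi=\bigoplus_{\bd\in\ZZ^n}(A_\bd)^\phi$ and it suffices to show $(A_\bd)^\phi\subseteq B$ for every $\bd$. As $R$ is a domain, $A$ is a domain by Theorem~\ref{thm.props}(1); each $X_i^{\pm}$ is nonzero (otherwise $t_i=X_i^-X_i^+=0$, contradicting regularity), so $\pm\be_i\in\Supp A$, and the support of a $\ZZ^n$-graded domain is closed under addition, hence $\Supp A=\ZZ^n$. Using Theorem~\ref{thm.props}(2) together with the relations of Theorem~\ref{thm.finit} (which let one sort the $X^-$-letters among themselves and the $X^+$-letters among themselves, each sort costing only a scalar in $\kk^\times$), one gets that every reduced monomial of degree $\bd$ is a scalar multiple of the standard one $m_\bd=\prod_{d_i<0}(X_i^-)^{-d_i}\prod_{d_i>0}(X_i^+)^{d_i}$, so $A_\bd=R\,m_\bd$, which is a free left $R$-module of rank one since $A$ is a domain and $m_\bd\neq 0$. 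Now $\phi(m_\bd)=\lambda_\bd m_\bd$ with $\lambda_\bd=\prod_i\alpha_i^{d_i}\in\kk^\times$, so for $a=r\,m_\bd$ we have $\phi(a)=a$ iff $\phi(r)=\lambda_\bd^{-1}r$.

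The crux — and the step I expect to be the main obstacle — is the root-of-unity bookkeeping that forces invariant components to sit in $B$. If $0\neq r\in R$ satisfies $\phi(r)=\lambda_\bd^{-1}r$, then iterating and using $\phi^{\ell}|_R=\id_R$ (with $\ell=\ord(\phi|_R)$) gives $\lambda_\bd^{\ell}=1$; on the other hand $\lambda_\bd^{m_1\cdots m_n}=1$ because $\alpha_i^{m_i}=1$ for all $i$; since $\ell$ is relatively prime to each $m_i$, we conclude $\lambda_\bd=1$. Then $\prod_i\alpha_i^{d_i}=1$ with the $\alpha_i$ of pairwise coprime orders $m_i$, which forces $\alpha_i^{d_i}=1$, i.e. $m_i\mid d_i$, for every $i$ (a standard fact about abelian groups: raising the relation to the power $\prod_{j\neq i}m_j$ kills all factors except the $i$-th, and coprimality then kills that one too). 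Hence $(A_\bd)^\phi=0$ unless $\bd=(m_1c_1,\dots,m_nc_n)$, in which case $\lambda_\bd=1$ and $(A_\bd)^\phi=R^\phi m_\bd$; but for such $\bd$ the monomial $m_\bd$ is, up to a scalar, a product of the $(X_i^{\pm})^{m_i}$ and hence lies in $B$, so $(A_\bd)^\phi\subseteq B$. Therefore $A^\phi\subseteq B\subseteq A^\phi$, i.e. $A^\phi=B=A_\nu(R^\phi,\tau,s)$, which by part (a) is a TGWA of type $(A_1)^n$. Apart from this bookkeeping, the only genuinely technical inputs are the rank-one freeness of $A_\bd$ (handled by the domain property and $\Supp A=\ZZ^n$) and the transfer statements already contained in Theorem~\ref{thm.fix}; the remainder is formal.
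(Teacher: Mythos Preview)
Your proposal is correct and follows essentially the same approach as the paper's proof: both invoke Theorem~\ref{thm.fix} for the subalgebra $B$, then show $B=A^\phi$ by a graded (equivalently, reduced-monomial) analysis using the pairwise coprimality of $\ell,m_1,\dots,m_n$, and verify type $(A_1)^n$ via the computation $\tau_i(s_j)=\gamma_{ij}^{m_im_j}s_j$. The only organizational difference is that you make the rank-one freeness $A_\bd=R\,m_\bd$ explicit and deduce $\lambda_\bd=1$ first (from $\lambda_\bd^{\ell}=\lambda_\bd^{m_1\cdots m_n}=1$) before reading off $m_i\mid d_i$ and $r\in R^\phi$, whereas the paper first argues $r\in R^\phi$ via orbit divisibility and then concludes the monomial is a product of the $(X_i^\pm)^{m_i}$; you also verify \eqref{eq.cons2} directly where the paper simply notes it is automatic in type $(A_1)^n$.
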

\begin{proof}
Let $B$ be the subalgebra of $A^\phi$ generated over $R^\phi$ by the $(X_i^{\pm})^{m_i}$. By Theorem \ref{thm.fix}, $B$ is a TGWA of rank $n$. We claim that $B=A^\phi$ and that $B$ is of type $(A_1)^n$.

Since $\phi$ preserves the space of reduced monomials (as defined in Section \ref{sec.tgwa}), it suffices to determine which reduced monomials are fixed by $A$. Moreover, $\phi$ preserves the standard $\ZZ$-grading on $A$. Hence, $(A_g)^\phi = (A^\phi)_g$ for all $g \in \ZZ^n$. It is immediately clear that $(A^\phi)_\bzero = R^\phi$.

Let $p \in A$ be a reduced monomial. Write
\begin{align}
\label{eq.fixp}
p=X_{i_1}^- \cdots X_{i_u}^- X_{j_1}^+ \cdots X_{j_v}^+.
\end{align}
Then $\{i_1,\hdots,i_u\} \cap \{j_1,\hdots,j_v\} = \emptyset$ by \eqref{eq.reduced} and using the additional relations from \eqref{thm.finit}, we may also assume that 
$i_1 \leq i_2 \leq \cdots \leq i_u$ and $j_1 \leq j_2 \leq \cdots \leq j_v$. 
We have 
\[ \phi(p)=\left( \alpha_{i_1}\inv \cdots \alpha_{i_u}\inv \alpha_{j_1} \cdots \alpha_{j_v} \right) p.\]
Suppose $p \in A^\phi$. By condition (3) of Hypothesis \ref{hyp.auto}, $m_f$ divides the number of factors of $\alpha_f^{\pm}$ that appear in the above coefficient for each index $f$. It follows that $p$ is the product of reduced monomials in the $(X_i^{\pm})^{m_i}$.

More generally, choose $rp \in A^\phi$ with $r \in R$ and $p$ as in \eqref{eq.fixp}. Denote by $\orb(r)$ the cardinality of the orbit of $r$ under the action of $\phi\restrict{R}$. Then we have
\[ rp = \phi(rp)=\left( \alpha_{i_1}\inv \cdots \alpha_{i_u}\inv \alpha_{j_1} \cdots \alpha_{j_v} \right) \phi(r) p.\]
Because $R$ is a regular GWA, then $\phi(r) = \left( \alpha_{i_1}\inv \cdots \alpha_{i_u}\inv \alpha_{j_1} \cdots \alpha_{j_v} \right)\inv r$. Then $\phi^{m_{i_1} \cdots m_{i_u} m_{j_1} \cdots m_{j_v}}(r) = r$, so $\orb(r)$ divides $m_{i_1} \cdots m_{i_u} m_{j_1} \cdots m_{j_v}$. But $\orb(r)$ also divides $\ell$, so $\orb(r)=1$ and $r\in R^\phi$. It follows that $A^\phi$ is generated over $R^\phi$ by the $(X_i^{\pm})^{m_i}$, i.e., $B=A^\phi$.

That $A^\phi$ is in fact a TGWA of type $(A_1)^n$ follows directly from the proof of Corollary \ref{cor.kfin}. It also is easy to check directly. Suppose $i \neq j$. Since $\sigma_i(t_j)=\gamma_{ij} t_j$ and $\sigma_i,\sigma_j$ commute, then
\begin{align*}
\sigma_i^{m_i}(s_j) 
	&= \sigma_i^{m_i}\left( \prod_{k=0}^{m_j-1} \sigma_j^{-k}(t_j) \right)
	= \prod_{k=0}^{m_j-1} \sigma_i^{m_i}\left( \sigma_j^{-k}(t_j) \right)
	= \prod_{k=0}^{m_j-1} \sigma_j^{-k} \left( \sigma_i^{m_i}(t_j) \right) \\
	&= \prod_{k=0}^{m_j-1} \sigma_j^{-k} \left(\gamma_{ij}^{m_i}t_j \right)
	= \prod_{k=0}^{m_j-1} \sigma_j^{-k}(t_j)
	= \gamma_{ij}^{m_im_j} s_j.
\end{align*}
This verifies \eqref{eq.sigma} for $A^\phi$ and so $A^\phi$ is of type $(A_1)^n$.

Since $A$ is $\mu$-consistent, then $A^\phi$ satisfies \eqref{eq.cons1} with respect to $\nu$. The second consistency equation, \eqref{eq.cons2}, is automatic in this case. Thus, $A^\phi$ is $\nu$-consistent.
\end{proof}

\begin{remark}
Assume $A$ and $\phi$ satisfy the setup of Theorem \ref{thm.fix1}. The relations of $A^\phi$ follow from Theorem \ref{thm.finit}. Similarly, the $\nu$-consistency of $A^\phi$ follows from Lemma \ref{lem.nucons}.
\end{remark}



\begin{example}
Let $A$ be the $n$th Weyl algebra (see Example \ref{ex.gwa}). Let $\phi \in \Aut(A)$ be a nontrivial automorphism satisfying Hypothesis \ref{hyp.auto} with $\phi\restrict{R} = \id_R$.
By \cite[Theorem 1]{TIK}, $A^\phi \niso A$. However, it is still a GWA since $\nu_{ij} = \gamma_{ij}^{m_im_j}=1$ for all $i,j$ and the $s_i$ are all regular in $R$.

More generally, if $A$ is a GWA then the fixed ring $A^\phi$ is again a GWA. In the next example we consider a special family of GWAs.
\end{example}

\begin{example}
We say a GWA $A=R(\sigma,t)$ of degree $n$ is \emph{quantum} if $R=\kk[h_1,\hdots,h_m]$, or $R=\kk[h_1^{\pm 1},\hdots,h_m^{\pm 1}]$, and $\sigma_i(h_j)=q_{ij} h_j$ for some $q_{ij} \in \kk^\times$. Suppose $\phi$ is as in Hypothesis \ref{hyp.auto} and $R^{\phi}$ is again a (Laurent) polynomial ring, then $A^\phi$ is again a quantum GWA.
\end{example}

\subsection{Ring-theoretic properties}

In this subsection, we consider the question of whether certain ring-theoretic properties pass to the ring of invariants. This includes the noetherian and Auslander-Gorenstein conditions (Corollary \ref{cor.props}). Note that simplicity of the invariant ring can be derived from Proposition \ref{prop.simp}. Furthermore, a simplicity criterion for TGWAs of type $(A_1)^n$ has been established by Hartiwg and \"{O}inert \cite{HO}.

If $R$ is a $\kk$-algebra and $\sigma \in \Aut(R)$, then a \emph{$\sigma$-derivation} is a $\kk$-linear map $\delta$ satisfying $\delta(rs)=\sigma(r)\delta(s) + \delta(s)r$ for all $r,s \in R$. The \emph{Ore extension} $R[x;\sigma,\delta]$ is defined as the $\kk$-algebra generated over $R$ with indeterminate $x$ and relations $xr=\sigma(r)x+\delta(r)$ for all $r \in R$. When $\delta=0$ we write $R[x;\sigma]$. It has been observed that every degree one GWA is a quotient of an Ore extension \cite{bavgldim}. We present a similar construction for TGWAs of type $(A_1)^n$. This will be useful in establishing many properties for TGWAs of this type, including the noetherian property. We then consider how certain properties pass to their invariant rings.

\begin{lemma}\label{lem.ore}
Let $A=A_\mu(R,\sigma,t)$ be a TGWA of type $(A_1)^n$. Then $A$ is a quotient of an iterated Ore extension.
\end{lemma}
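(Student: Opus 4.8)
The plan is to build an iterated Ore extension surjecting onto $A=A_\mu(R,\sigma,t)$ by adjoining the generators $X_1^+, X_1^-, X_2^+, X_2^-, \ldots, X_n^+, X_n^-$ one at a time (in this order), at each stage extending the automorphism $\sigma$-action appropriately and using that the defining relations of a type $(A_1)^n$ TGWA from Theorem \ref{thm.finit} are of Ore type up to the relations $X_i^-X_i^+ = t_i$, $X_i^+X_i^- = \sigma_i(t_i)$, which will only be imposed in the quotient. First I would set $S_0 = R$. To adjoin $X_1^+$, observe that $X_1^+ r = \sigma_1(r) X_1^+$ for $r \in R$, so $S_1 := R[X_1^+;\sigma_1]$ maps into $A$; here $\delta = 0$. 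To adjoin $X_1^-$, I need an automorphism $\hat\sigma_1$ of $S_1$ with $X_1^- z = \hat\sigma_1(z) X_1^-$ for all $z \in S_1$: on $R$ it is $\sigma_1^{-1}$, and on $X_1^+$ we have $X_1^- X_1^+ = t_1$ while $X_1^+ X_1^- = \sigma_1(t_1)$, which is not literally an Ore relation, so instead I set $\hat\sigma_1(X_1^+) = X_1^+$ and record that the genuine relations $X_1^- X_1^+ - t_1$ and $X_1^+ X_1^- - \sigma_1(t_1)$ will be killed in the final quotient. Thus $S_2 := S_1[X_1^-; \hat\sigma_1]$ maps onto the subalgebra of $A$ generated by $R, X_1^+, X_1^-$.

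Continuing, at the stage where $X_j^+$ is adjoined to $S_{2j-2}$ (which is generated by $R$ and $X_i^\pm$ for $i<j$), I extend the relevant automorphism by the rules from Theorem \ref{thm.finit}: $X_j^+ r = \sigma_j(r)X_j^+$ for $r\in R$, $X_j^+ X_i^- = \mu_{ij}^{-1}\,X_i^- X_j^+$... wait, more precisely $X_i^+ X_j^- = \mu_{ij} X_j^- X_i^+$ gives $X_j^+ X_i^- = \mu_{ji}^{-1} X_i^- X_j^+$ for $i\neq j$, and $X_i^+ X_j^+ = \gamma_{ij}\mu_{ij}^{-1} X_j^+ X_i^+$ gives $X_j^+ X_i^+ = (\gamma_{ij}\mu_{ij}^{-1})^{-1} X_i^+ X_j^+$; each of these is of the form $X_j^+ w = (\text{scalar})\, w\, X_j^+$ for a generator $w$, hence defines an automorphism $\tau_{2j-2}$ of $S_{2j-2}$, and we set $S_{2j-1} = S_{2j-2}[X_j^+;\tau_{2j-2}]$ (again $\delta=0$). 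Adjoining $X_j^-$ similarly: $X_j^- r = \sigma_j^{-1}(r) X_j^-$, the relation $X_j^- X_i^- = \gamma_{ij}\mu_{ji}^{-1} X_i^- X_j^-$ rearranges to an Ore relation for $X_j^-$ against $X_i^-$, the relation $X_i^+ X_j^- = \mu_{ij} X_j^- X_i^+$ rearranges to $X_j^- X_i^+ = \mu_{ij}^{-1} X_i^+ X_j^-$ which is Ore in $X_j^-$, and we decree $\tau_{2j-1}(X_j^+) = X_j^+$, deferring $X_j^-X_j^+ - t_j$ and $X_j^+X_j^- - \sigma_j(t_j)$ to the quotient. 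This yields the iterated Ore extension
\[
E := R[X_1^+;\sigma_1][X_1^-;\hat\sigma_1][X_2^+;\tau_2][X_2^-;\tau_3]\cdots[X_n^+;\tau_{2n-2}][X_n^-;\tau_{2n-1}],
\]
and the evident assignment on generators extends to a surjective $\kk$-algebra homomorphism $E \onto A$, since every defining relation of $A$ (from Theorem \ref{thm.finit}) holds in the image: the Ore relations hold by construction and the relations $X_i^- X_i^+ = t_i$, $X_i^+ X_i^- = \sigma_i(t_i)$ hold in $A$ by definition.

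The main obstacle — really a bookkeeping obstacle rather than a conceptual one — is checking at each stage that the prescribed action on the previously adjoined generators is actually a well-defined ring automorphism of $S_{k}$ compatible with all relations already present, i.e. that the scalars assigned are mutually consistent (for instance that commuting $X_j^+$ past $X_i^+$ and then $X_i^-$ gives the same scalar as past $X_i^-$ then $X_i^+$, and that these scalars are compatible with the $R$-action and with the $\mu$-consistency relations \eqref{eq.cons1}). This reduces to the identities $\gamma_{ij}, \mu_{ij} \in \kk^\times$ are scalars so all the rearrangements are automatically consistent as long as one only uses relations among $X_i^\pm$ ($i<j$) and $R$, which form an iterated Ore extension by the inductive hypothesis; since in an Ore extension $S[x;\tau]$ one only needs $\tau\in\Aut(S)$, and our $\tau$ is diagonal on generators with scalar eigenvalues, it visibly preserves every relation of $S$. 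I would note explicitly that the relations $X_i^- X_i^+ - t_i$ and $X_i^+ X_i^- - \sigma_i(t_i)$ are precisely the ones not of Ore type, so they generate (together with whatever extra graded ideal $\cJ$ contributes) the kernel of $E \onto A$, giving $A$ as the claimed quotient.
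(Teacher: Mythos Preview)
There is a genuine gap. By setting $\hat\sigma_i(X_i^+)=X_i^+$ with zero derivation, you have built into $E$ the relation $X_i^-X_i^+=X_i^+X_i^-$. But in $A$ one has $X_i^-X_i^+-X_i^+X_i^-=t_i-\sigma_i(t_i)$, and since $A$ is $\mu$-consistent this element of $R$ is nonzero whenever $\sigma_i(t_i)\neq t_i$ (already in the first Weyl algebra, $t=h$ and $\sigma(t)=h-1$). Hence the ``evident assignment'' does \emph{not} extend to a ring homomorphism $E\to A$: a relation that holds in $E$ fails in $A$. You cannot ``defer'' both relations $X_i^-X_i^+-t_i$ and $X_i^+X_i^--\sigma_i(t_i)$ to the quotient while simultaneously forcing $X_i^-X_i^+=X_i^+X_i^-$ in $E$; together these force $t_i=\sigma_i(t_i)$.

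The fix is to use a nonzero $\theta_i$-derivation: take $\theta_i(X_i^+)=X_i^+$ and $\delta_i(X_i^+)=t_i-\sigma_i(t_i)$, so that in the Ore extension one has $X_i^-X_i^+=X_i^+X_i^-+(t_i-\sigma_i(t_i))$, which \emph{does} hold in $A$. Then quotienting by the single relation $X_i^-X_i^+-t_i$ automatically yields $X_i^+X_i^-=\sigma_i(t_i)$ as well. This is exactly what the paper does. The paper also adjoins all $X_i^+$ first and then all $X_i^-$, which is convenient because then every $\delta_i$ is zero on the $X_j^-$ already present (for $j<i$) and one only needs $\delta_i(X_j^+)=0$ for $j\neq i$; verifying that $(\theta_i,\delta_i)$ respects the purely scalar commutation relations among the $X_j^+$ and $X_j^-$ is then routine. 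Your interleaved ordering $X_1^+,X_1^-,X_2^+,\ldots$ can be made to work the same way once you put in the derivations, but the bookkeeping is slightly heavier.
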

\begin{proof}
Let $T_0 = R[X_1^+;\sigma_1]\cdots[X_n^+;\sigma_n]$ where $\sigma_i$ is extended to $R[X_1^+;\sigma_1]\cdots[X_{i-1}^+;\sigma_{i-1}]$ by setting $\sigma_i(X_j^+) = \gamma_{ij}\mu_{ij}\inv X_j^+$ for all $j<i$. Now let $T = T_0[X_1^-;\theta_1,\delta_1]\cdots [X_n^-;\theta_n,\delta_n]$ where, for all $i$, $\theta_i(r)=\sigma_i\inv(r)$ for all $r \in R$,
$\theta_i(X_j^-) = \gamma_{ij}\inv\mu_{ji} X_j^-$ and $\delta_i(X_j^-)=0$ for all $j<i$, and
\begin{align*}
    \theta_i(X_j^+) = \begin{cases}
        X_i^+ & \text{if $i=j$} \\
        \mu_{ij}\inv X_j^+ & \text{if $i\neq j$,}
    \end{cases} \qquad
    \delta_i(X_j^+) = \begin{cases}
        t_i-\sigma_i(t_i) & \text{if $i=j$} \\
        0 & \text{if $i\neq j$.}
    \end{cases}
\end{align*}
Let $I$ be the ideal of $T$ generated by the $X_i^-X_i^+-t_i$ for all $i$. By comparing the relations of $T$ and $A$, we see readily that $A \iso T/I$.
\end{proof}

Before the next result, we review some definitions. A noetherian ring $A$ of finite injective dimension is called \emph{Auslander-Gorenstein (AG)} if for any (left or right) module $M$ and submodule $N$ of $\Ext_A^s(M,A)$, $s \in \ZZ_+$, we have $\Ext_A^i(N,A)=0$ for $i<s$. Hodges proved that every degree one GWA over $\kk[t]$ with $\sigma(t)=t-1$ is AG \cite[Theorem 2.2]{H1}. More generally, Kirkman and Kuzmanovich proved that any degree one GWA over a noetherian AG ring is again AG \cite[Proposition 2.2]{KK}. We denote by $\lgld A$ (resp. $\rgld A$) the left (resp. right) global dimension of $A$. Recall that if $A$ is noetherian, then $\lgld A = \rgld A$ and we refer to this simply as the global dimension of $A$, denoted $\gldim A$.

For a TGWA $A=A_\mu(R,\sigma,t)$ of type $(A_1)^n$, we set $A^{(0)}=R$ and for $k$, $1 \leq k \leq n$, let $A^{(k)}$ be the subalgebra of $A$ generated by $R$ and $X_1^{\pm},\hdots,X_k^{\pm}$. Hence, $A=A^{(n)}$. That $A^{(k)}$ is indeed a subalgebra follows directly from the $\ZZ^n$-grading on $A$. It is clear that $A^{(k)}$ is a TGWA of rank $k$ over $R$ 

\begin{theorem}\label{thm.ore}
Let $A=A_\mu(R,\sigma,t)$ be a TGWA of type $(A_1)^n$.
\begin{enumerate}
\item \label{ore1} If $R$ is (left/right) noetherian, then $A$ is (left/right) noetherian.
\item \label{ore2} If $R$ is an AG domain, then $A$ is an AG domain.
\item \label{ore3} We have $\GKdim(A) \geq \GKdim(R)+n$.
\item \label{ore4} If $\lgld R < \infty$ and $\lgld A < \infty$, then $\lgld R \leq \lgld A \leq \lgld R + n$.
\end{enumerate}
\end{theorem}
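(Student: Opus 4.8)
The key tool is Lemma~\ref{lem.ore}, which exhibits $A \iso T/I$ where $T$ is an iterated Ore extension of $R$ by $2n$ variables and $I$ is the ideal generated by the $n$ elements $X_i^-X_i^+ - t_i$. Each of the four statements should be deduced by first establishing the corresponding property for $T$ (via standard Ore-extension theory) and then controlling the passage to the quotient $T/I$. For \eqref{ore1}, I would invoke the theorem that an Ore extension $S[x;\theta,\delta]$ of a left (right) noetherian ring is again left (right) noetherian; applying this $2n$ times gives $T$ noetherian, and quotients of noetherian rings are noetherian, so $A$ is noetherian.

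For \eqref{ore2}, the plan is to run an induction up the tower $R = A^{(0)} \subset A^{(1)} \subset \cdots \subset A^{(n)} = A$, where each $A^{(k)}$ is a rank-one TGWA over $A^{(k-1)}$, i.e.\ essentially a degree-one GWA (of type $A_1$) over $A^{(k-1)}$. The twisting parameters $\gamma_{kj}, \mu_{kj}$ only rescale the generators and do not affect the Auslander--Gorenstein property, so each step is covered by the Kirkman--Kuzmanovich result \cite[Proposition 2.2]{KK} that a degree-one GWA over a noetherian AG ring is noetherian AG, together with Theorem~\ref{thm.props}(1) to keep the domain property. One should check that $R$ being an AG domain forces the $A^{(k)}$ to remain noetherian AG domains at each stage; the base case is the hypothesis and the inductive step is exactly one application of \cite[Proposition 2.2]{KK}.

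For \eqref{ore3}, I would use that $A$ contains $R$ and that, by Theorem~\ref{thm.props}(2) together with the remark following it, $A$ is a free (or at least faithfully flat) left $R$-module with basis the reduced monomials, which include all monomials $(X_1^+)^{a_1}\cdots(X_n^+)^{a_n}$ with $a_i \geq 0$; thus $A$ contains a subalgebra isomorphic to an iterated Ore extension $R[X_1^+;\sigma_1]\cdots[X_n^+;\sigma_n]$, and GK-dimension is monotone under subalgebras, while each Ore extension by an automorphism raises GK-dimension by exactly $1$ (Bell--Zhang or the classical bound), giving $\GKdim(A) \geq \GKdim(R)+n$. For \eqref{ore4}, the lower bound $\lgld R \leq \lgld A$ follows because $R$ is a direct summand of $A$ as an $R$-bimodule (the degree-zero component splits off), so global dimension cannot drop; the upper bound comes from running the tower $A^{(k)}$ again, using that adjoining one variable via an Ore extension $S[x;\theta,\delta]$ raises global dimension by at most $1$ (Rosenberg or McConnell--Robson 7.5.3) and that passing to the quotient by the single regular normal-ish relation $X_k^-X_k^+ - t_k$ does not increase it, so $\lgld A^{(k)} \leq \lgld A^{(k-1)} + 1$.

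The main obstacle I anticipate is \eqref{ore4}: the subtlety is that the relation $X_k^-X_k^+ = t_k$ being factored out of $A^{(k-1)}[X_k^+;\sigma_k][X_k^-;\theta_k,\delta_k]$ must be handled carefully, since global dimension of a quotient can in principle jump. One resolves this by the same device used for ordinary GWAs (cf.\ \cite{bavgldim}): the element $X_k^- X_k^+ - t_k$ is a normal regular element in the Ore extension, and factoring out a single normal regular element drops global dimension by at most $1$ but here precisely compensates for the extra variable, so the two Ore steps and the one quotient step net out to an increase of at most $1$ per rank. The finiteness hypothesis $\lgld A < \infty$ is what guarantees the Rees-ring / filtration arguments apply and the bounds are not vacuous.
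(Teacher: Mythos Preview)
Your approach to parts \eqref{ore1}, \eqref{ore3}, and \eqref{ore4} is essentially the same as the paper's. For \eqref{ore1} the paper invokes the skew Hilbert Basis Theorem on $T$ and passes to the quotient; for \eqref{ore3} it observes that $A$ contains the iterated skew polynomial ring $R[X_1^+;\sigma_1]\cdots[X_n^+;\sigma_n]$ and cites \cite[Lemma~3.4]{KL}; for \eqref{ore4} it runs the tower $A^{(k)}$, uses two Ore-extension steps plus the quotient by the normal regular element $X_{k+1}^-X_{k+1}^+ - t_{k+1}$, and cites \cite[Theorem~7.5.3(i)]{MR} and \cite[Theorem~7.3.5]{MR}, exactly as you describe.

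For part \eqref{ore2}, however, your plan has a genuine gap. You want to apply \cite[Proposition~2.2]{KK} to the rank-one step $A^{(k-1)} \subset A^{(k)}$, but that result is stated for a GWA $D(\sigma,t)$ with $t$ \emph{central} in $D$. In a type $(A_1)^n$ TGWA one has $\sigma_j(t_k) = \gamma_{jk} t_k$ for $j \neq k$, so $X_j^\pm t_k = \gamma_{jk}^{\pm 1} t_k X_j^\pm$ and $t_k$ is only \emph{normal} in $A^{(k-1)}$, not central, unless all $\gamma_{jk}=1$ for $j<k$. Hence $A^{(k)}$ is not literally a degree-one GWA over $A^{(k-1)}$ in the sense required by Kirkman--Kuzmanovich, and your sentence ``the twisting parameters only rescale the generators and do not affect the Auslander--Gorenstein property'' is precisely the step that needs proof rather than assertion.

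The paper sidesteps this issue: it works with the single iterated Ore extension $T$ of Lemma~\ref{lem.ore}, shows $T$ is AG via \cite[Theorem~2.2]{Ek1}, verifies by direct computation that each $z_j = X_j^-X_j^+ - t_j$ is normal in $T$, and checks that $(z_1,\ldots,z_n)$ is a regular normal sequence by observing that $T/(z_1,\ldots,z_k)$ is an Ore extension over the domain $A^{(k)}$. The conclusion then follows from \cite[Proposition~2.1]{ASZ2}. Your tower idea could be salvaged by replacing the appeal to \cite{KK} with this Ekstr\"om-plus-normal-regular-element argument at each stage, but that amounts to doing the paper's proof $n$ times instead of once.
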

\begin{proof}
We keep the notation of Lemma \ref{lem.ore}.

\eqref{ore1} Suppose $R$ is left noetherian. Then $T$ is left noetherian by the skew Hilbert Basis Theorem (see \cite{GW}) and so $A$ is left noetherian. The right noetherian proof is identical.

\eqref{ore2} Suppose $R$ is an AG domain. Since $R$ is a domain, so are $T$ and $A$. By \cite[Theorem 2.2]{Ek1}, $T$ is AG. For all $i,j \in [n]$ with $i \neq j$, 
\begin{align*}
X_i^{\pm}(X_j^-X_j^+-t_j)
    &= \left( \gamma_{ij}^{\pm 1} (X_j^-X_j^+)- \sigma_i^{\pm 1}(t_j) \right) X_i^{\pm}
    = \gamma_{ij}^{\pm 1} (X_j^-X_j^+-t_j)X_i^{\pm}, \\
X_j^-(X_j^-X_j^+-t_j)
        &= X_j^-(X_j^+X_j^- - \sigma_j(t_j))
        = (X_j^-X_j^+-t_j)X_j^-, \\
X_j^+(X_j^-X_j^+-t_j)
    &= (X_j^+X_j^- - \sigma_j(t_j))X_j^+
    = (X_j^-X_j^+-t_j)X_j^+.
\end{align*}
It follows that $z_j=X_j^-X_j^+-t_j$ is a normal element in $T$ for every $j$ and $A\iso T/I$. Moreover, it is clear that $T/(z_1)$ is an Ore extension over $A^{(1)}$. Consequently, $T/(z_1)$ is a domain. More generally, $T/(z_1,\hdots,z_k)$ is an Ore extension over $A^{(k)}$ and also a domain. Thus, $(z_1,\hdots,z_n)$ is a regular normal sequence in $T$. The result now follows from \cite[Proposition 2.1]{ASZ2}.

\eqref{ore3} Since $A$ contains a copy of the iterated skew polynomial ring $R[X_1^+;\sigma_1]\cdots[X_n^+;\sigma_n]$, then the result follows from \cite[Lemma 3.4]{KL}.

\eqref{ore4} Since $A$ is a free $R$-module with basis the reduced monomials, then $\lgld R \leq \lgld A$.
Now suppose $\lgld R < \infty$ and $\lgld A < \infty$.
If $A$ has rank $1$, then $A$ is a GWA and the result follows from \cite[Theorem 2.7]{bavgldim}. We proceed inductively. Suppose $A$ has rank $k+1$ and for all TGWA of rank $k$, $1 \leq k < n$, the upper bound on left global dimension is $\lgld R + k$.

It is clear that $A^{(k+1)}$ is a free module over $A^{(k)}$. Set 
\[S=A^{(k)}[X_{k+1}^+;\sigma_{k+1}][X_{k+1}^-;\theta_{k+1},\delta_{k+1}].\]
Then $A \iso S/(X_{k+1}^-X_{k+1}^+-t_{k+1})$.
By \cite[Theorem 7.5.3(i)]{MR} and the induction hypothesis, we have
\[ \lgld S \leq \lgld A^{(k)} + 2 \leq \lgld R + (k+2).\]
By the above, $X_{k+1}^-X_{k+1}^+-t_{k+1}$ is a normal regular element in $S$. Since $A$ has finite global dimension, then by \cite[Theorem 7.3.5]{MR}, 
\[ \lgld A \leq \lgld S-1 \leq \lgld R + (k+1).\] 
The result now follows by induction.
\end{proof}

\begin{corollary}
\label{cor.props}
Let $A=A_\mu(R,\sigma,t)$ be a TGWA of type $(A_1)^n$. Suppose $\phi$ satisfies Hypothesis \ref{hyp.auto}.
\begin{enumerate}
\item If $R$ is (left/right) noetherian, then so is $A^\phi$.
\item If $R^\phi$ is an AG domain, then so is $A^\phi$.
\end{enumerate}
\end{corollary}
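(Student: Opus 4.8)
The plan is to deduce each statement from Theorem \ref{thm.fix1} together with the corresponding part of Theorem \ref{thm.ore}. The essential observation is that, under Hypothesis \ref{hyp.auto}, Theorem \ref{thm.fix1} tells us that $A^\phi = A_\nu(R^\phi,\tau,s)$ is itself a TGWA of type $(A_1)^n$ — built over the base ring $R^\phi$ rather than $R$. So the two parts of the present corollary are nothing more than parts \eqref{ore1} and \eqref{ore2} of Theorem \ref{thm.ore} applied to this new TGWA, \emph{provided} we know that the base ring $R^\phi$ inherits the relevant property.

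For part (1), I would first note that Hypothesis \ref{hyp.auto}(4) forces either $R$ to be a commutative domain or $\phi\restrict{R} = \id_R$; in the latter case $R^\phi = R$ is trivially (left/right) noetherian by hypothesis, while in the former case $R^\phi$ is the fixed ring of a commutative noetherian ring under a finite group action, hence noetherian (e.g. $R$ is a finitely generated $R^\phi$-module, or simply invoke that $R^\phi \subseteq R$ is an integral extension of noetherian commutative rings). Actually, since Theorem \ref{thm.fix1} already presumes $R$ is a domain, the relevant case is $R^\phi \subseteq R$ with $R$ commutative noetherian, and $R^\phi$ is then noetherian. Then Theorem \ref{thm.ore}\eqref{ore1} applied to $A^\phi = A_\nu(R^\phi,\tau,s)$ gives that $A^\phi$ is (left/right) noetherian.

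For part (2), the hypothesis is stated directly on $R^\phi$: we assume $R^\phi$ is an AG domain. Then Theorem \ref{thm.fix1} identifies $A^\phi$ with the TGWA $A_\nu(R^\phi,\tau,s)$ of type $(A_1)^n$, and Theorem \ref{thm.ore}\eqref{ore2} — which says a TGWA of type $(A_1)^n$ over an AG domain is an AG domain — applies verbatim to conclude that $A^\phi$ is an AG domain. I do not anticipate a genuine obstacle here; the only point requiring a word of care is confirming that the regularity and $\nu$-consistency hypotheses needed to invoke Theorem \ref{thm.ore} for $A^\phi$ are in place, but these are exactly what Theorem \ref{thm.fix1} (via Theorem \ref{thm.fix} and Lemma \ref{lem.nucons}) guarantees: $A^\phi$ is a regular, $\nu$-consistent TGWA of type $(A_1)^n$. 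Thus the proof is essentially a citation of Theorem \ref{thm.fix1} followed by the matching clause of Theorem \ref{thm.ore}.
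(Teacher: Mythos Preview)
For part (2), your argument matches the paper's exactly.

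For part (1), you take a genuinely different route. The paper first applies Theorem \ref{thm.ore}\eqref{ore1} to conclude that $A$ itself is (left/right) noetherian, and then invokes Montgomery's general result \cite[Corollary 1.12]{Mo} (fixed rings of noetherian rings under finite automorphism groups are noetherian) to pass to $A^\phi$. You instead pass first from $R$ to $R^\phi$ and then apply Theorem \ref{thm.ore}\eqref{ore1} to the TGWA $A^\phi = A_\nu(R^\phi,\tau,s)$ furnished by Theorem \ref{thm.fix1}. Your route is sound in outline, but the justifications you offer for $R^\phi$ being noetherian need tightening: the claim that ``$R$ is a finitely generated $R^\phi$-module'' is not automatic for an arbitrary commutative noetherian domain under a finite-order automorphism, and the phrase ``integral extension of noetherian commutative rings'' presupposes the noetherianity of $R^\phi$ that you are trying to establish. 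The cleanest fix is to invoke the very same Montgomery result, applied to $R$ rather than to $A$; at that point your detour through $R^\phi$ becomes a mild rearrangement of the paper's argument rather than an independent one. The paper's ordering has the small advantage that part (1) does not need the structural identification of $A^\phi$ from Theorem \ref{thm.fix1} at all.
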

\begin{proof}
By Theorem \ref{thm.ore}\eqref{ore1}, $A$ is (left/right) noetherian. So, (1) follows from \cite[Corollary 1.12]{Mo}. By Theorem \ref{thm.fix1}, $A^\phi$ is a TGWA of type $(A_1)^n$. Hence, (2) follows from Theorem \ref{thm.ore}\eqref{ore2}.
\end{proof}

For Corollary \ref{cor.props} (2), the conditions on $R^\phi$ simplify depending upon the additional hypothesis on $R$. If $\phi$ is the identity on $R$, then it suffices to assume that $R$ is an AG domain. If $R$ is commutative, the AG condition simplifies to the Gorenstein condition. As in many examples, if $R$ is a polynomial ring, then one can apply Watanabe's Theorem \cite{watanabe} to determine whether $R^\phi$ is Gorenstein.

\section{A rank $2$ case}\label{sec.A2}

Let $A$ be a TGWA of type $A_2$. Recall we assume $A$ is regular and $\mu$-consistent. Then the rank of $A$ is $n=2$ and $\deg p_{12}=\deg p_{21}=2$, where $p_{12}$ and $p_{21}$ are the minimal polynomials as in Definition \ref{def:k-finite}. For the rest of this section we fix $\lambda_1,\lambda_2,\eta_1,\eta_2\in\kk$ such that 
\begin{equation}\label{eq.p12p21}
p_{12}(x)=x^2+\lambda_1x+\lambda_2,\qquad p_{21}(x)=x^2+\eta_1x+\eta_2.
\end{equation}
By \cite[Theorem 3.2(b)]{FH1} and \cite[Theorem 6.1]{hart3}, $A$ is generated by $R$, $X_1^\pm$, $X_2^\pm$ with a complete list of relations being \eqref{eq.tgwc1}-\eqref{eq.tgwc3} plus
\begin{align}
\label{eq.A2rel1}
&(X_1^{+})^2X_2^{+}+\lambda_1\mu_{12}\inv X_1^{+} X_2^{+}X_1^{+} +\lambda_2 \mu_{12}^{-2} X_2^{+}(X_1^{+})^2=0, \\
\label{eq.A2rel2} &X_2^{-}(X_1^{-})^2+\lambda_1\mu_{21}\inv X_1^{-} X_2^{-}X_1^{-} +\lambda_2 \mu_{21}^{-2} (X_1^{-})^2X_2^{-}=0, \\
\label{eq.A2rel3}
&(X_2^{+})^2X_1^{+}+\eta_1\mu_{21}\inv X_2^{+} X_1^{+}X_2^{+} +\eta_2 \mu_{21}^{-2} X_1^{+}(X_2^{+})^2=0, \\
\label{eq.A2rel4} &X_1^{-}(X_2^{-})^2+\eta_1\mu_{12}\inv X_2^{-} X_1^{-}X_2^{-} +\eta_2 \mu_{12}^{-2} (X_2^{-})^2X_1^{-}=0.
\end{align}
Hence, this is a generalization of Example \ref{ex.A2simple}.

\begin{example}\label{ex.A2}
Let $R=\kk[h]$, let $p \in \kk^\times$, and let $\beta \in \kk$. Assume $(p,\beta) \neq (1,0)$. We allow $\mu=\mu_{12} \in \kk^\times$ but require $\mu_{21}=\mu\inv$. Define $\sigma_1,\sigma_2 \in \Aut R$ by $\sigma_1(h)=ph+\beta$ and $\sigma_2(h)=p\inv(h-\beta)$. Let $t_1=h$ and $t_2=ph+\beta$. Then $\sigma_1\sigma_2=\id_R$, $\sigma_1(t_1)=t_2$ and $\sigma_2(t_2)=t_1$. Thus, for any $\mu$, $A=\cA_\mu(R,\sigma,t)$ is a TGWA of type $A_2$.

We have
\[ \sigma_2^2(t_1) - (1+p\inv)\sigma_2(t_1) + p\inv t_1 
= (p^{-2}h - \beta(p^{-2}+p\inv)) - (1+p\inv)p\inv(h-\beta) + p\inv h = 0.\]
Thus, the minimal polynomial for $\sigma_2$ on $V_{21}$ is $x^2-(p\inv+1)x+p\inv=(x-p\inv)(x-1)$. 
Similarly, the minimal polynomial for $\sigma_1$ on $V_{12}$ is $x^2-(p+1)x+p=(x-p)(x-1)$.

Note that if $\beta \neq 0$, then by making the change of variable $h \mapsto \beta h$ and $X_2^+ \mapsto \beta X_2^+$, we may assume $\beta=1$.
\end{example}

\begin{remark}\label{rem.klein}
Recall the TGWAs of Example \ref{ex.rank2}, with $R=\kk[h]$ and $\sigma_i(h)=h-\beta_i$, $i=1,2$. Notice that, among those, the only ones that are of type $A_2$ are those such that $t=(p_1,p_2)$ with $\deg p_1=\deg p_2=1$. This is because, if say $\deg p_2=k$, then the $\kk$-vector space $V_{12}=\kk\{\sigma_1^\ell(p_2)~|~\ell\in\mathbb{Z}\}$ has a basis $\{1,h,h^2,\ldots,h^k\}$ and the minimal polynomial of $\sigma_1$ on $V_{12}$ is $(x-1)^{k+1}$. The same happens if we replace $p_2$ with $p_1$ and $\sigma_1$ with $\sigma_2$. It then follows that the noncommutative Kleinian fiber products are of type $A_2$ if and only if $\deg p_1=\deg p_2=1$. But this implies that $\frac{\beta_1}{\beta_2}=-1$, so these are essentially the algebras of Example \ref{ex.A2}.
\end{remark}

\begin{example}\label{ex.A2b}
Here we present examples of type $A_2$ TGWAs over $R=\kk[h_1,h_2]$.
\begin{enumerate}
\item This is a specific example from the family studied by Sergeev \cite[Section 1.5.2]{sergeev}. Set $\sigma_i(h_j)=h_j-\delta_{ij}$ for $i,j \in \{1,2\}$. Set $t_1=h_2-h_1$ and $t_2=h_2-h_1+1$. By \cite[Proposition 8.6]{HO}, $\cA(R,\sigma,t)$ is simple. 

\item (\cite[Example 1]{MT})
Set $\sigma_1(h_i)=h_i+1$ for $i=1,2$, $\sigma_2(h_1) = h_1-1$, and $\sigma_2(h_2)=h_2$.
Let $t_1=h_1h_2$ and $t_2=h_1+1$. In this case, and in case (1), we have $p_{12}(x)=p_{21}(x)=x^2-2x+1$.


\item (\cite[Section 5]{hart3}) 
Let $\mu,q \in \kk^\times$. Define $\sigma_1,\sigma_2 \in \Aut(R)$ by
\begin{align*}
&\sigma_1(h_1) = \mu q\inv h_1-h_2, &
&\sigma_1(h_2) = \mu q h_2, \\
&\sigma_2(h_1) = \mu q h_1 + h_2,  & 
&\sigma_2(h_2) = \mu q\inv h_2.
\end{align*}
Set $t_1 = h_1$ and $t_2 = \mu\inv q\inv h_1 - \mu^{-2}h_2$. Then $p_{12}=p_{21}=x^2-\mu(q+q\inv) x + \mu^{-2}$.
\end{enumerate}
\end{example}

\begin{definition}\label{def.cheb-poly}
Let $q$ and $\beta$ be indeterminates and let $a\in\NN$. We define polynomials $S_a(q,\beta)\in\ZZ[q,\beta]$ by
\[S_a(q,\beta):=\sum_{i=0}^{\lfloor\frac{a}{2}\rfloor}(-1)^i\binom{a-i}{i}\beta^i q^{a-2i}.\]
\end{definition}

\begin{remark}\label{rem.recur}
The polynomials $S_a=S_a(q,\beta)$ can also be defined by the recurrence relation 
\[ S_{a+1}=q S_a-\beta S_{a-1}, \qquad S_0=1, \quad S_1=q.\]
They are related to the Chebyshev polynomials of the second kind by 
\begin{equation}\label{eq.chebyshev}
S_a(q,\beta)=\beta^{a/2} U_a\left(\frac{q}{2\beta^{1/2}}\right)
\end{equation}
and they satisfy the identity
\begin{equation}\label{eq.cheby}
\beta S_{c-2}S_{a-1}+S_{a+c-1}=S_aS_{c-1}
\end{equation}
for all $a\geq 1$, $c\geq 2$.
\end{remark}

\begin{lemma}\label{lem.rewrite}Suppose $x,y$ are elements in a $\kk$-associative algebra $A$ such that for some $q,\beta,q',\beta'\in\kk$, $q,\beta \neq 0$, the relations
\begin{align}
\label{eq.rel-lem} x^2y-qxyx+\beta yx^2& =0 \\
\label{eq.rel-lem2}y^2x-q'yxy+\beta'xy^2 &=0
\end{align}
are satisfied, and $S_a(q,\beta)\neq 0$ for all $a\geq 0$. Then for all $a,b,c\geq 0$ we have that, in $A$
\begin{equation}\label{eq.lem-rewrite} x^ay^bx^c=\sum_{i=0}^b \gamma_i y^i x^{a+c}y^{b-i} \end{equation}
for some $\gamma_i\in\kk$, $0\leq i\leq b$.
\end{lemma}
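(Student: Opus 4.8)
The plan is to induct on $b$. For $b=0$ the statement is trivial ($\gamma_0=1$), and for $b=1$ it is a direct consequence of the relation \eqref{eq.rel-lem}: using $x^2y-qxyx+\beta yx^2=0$ one can move a single $y$ past a power of $x$ on the right at the cost of producing terms with the $y$ moved to the left. Concretely, I would first establish by a subsidiary induction on $a$ the identity $x^a y x = S_a(q,\beta)^{-1}\bigl(\text{linear combination of } y x^{a+1} \text{ and } x^{a+1}y\bigr)$, which is where the hypothesis $S_a(q,\beta)\neq 0$ enters: the recurrence $S_{a+1}=qS_a-\beta S_{a-1}$ from Remark \ref{rem.recur} is exactly what governs the coefficients as one iterates \eqref{eq.rel-lem}. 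The identity \eqref{eq.cheby} will presumably be the bookkeeping tool that makes this iteration close up cleanly.

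For the inductive step on $b$, I would write $x^a y^b x^c = x^a y^{b-1}\cdot (y x^c)$ and first apply the $b=1$ case (with the roles of the exponents adjusted) to rewrite $y x^c$ as a $\kk$-combination of terms $y^j x^{c'} y^{1-j}$; more efficiently, peel off one $y$ at a time from the right end of the block $y^b$. That is, $x^a y^b x^c = x^a y^{b-1}(yx^c)$, rewrite $y x^c = \sum_j \delta_j\, x^{c} y$-type terms via the $b=1$ case applied to $x^{?}yx^{?}$, then the leftover has a $y^{b-1}$ block sitting to the left of a pure power of $x$ followed by at most one $y$, and one applies the induction hypothesis to the $y^{b-1}$ block. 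Each such move only ever \emph{increases} the total power of $x$ sitting in the middle and moves $y$'s to the left, so the resulting monomials all have the shape $y^i x^{a+c} y^{b-i}$ with $0\le i\le b$, as required; the coefficients $\gamma_i$ are the finite products/sums of the scalars produced along the way. Note the second relation \eqref{eq.rel-lem2} is not actually needed for \eqref{eq.lem-rewrite} as stated — only \eqref{eq.rel-lem} and the nonvanishing of the $S_a$ are used — so I would simply not invoke it (or the authors may intend it for a companion identity).

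The main obstacle is the combinatorial bookkeeping in the subsidiary induction computing $x^a y x$: one must track how the Chebyshev-type coefficients $S_a(q,\beta)$ propagate through repeated substitution, and verify that no denominator other than the $S_a$'s appears, so that the hypothesis $S_a(q,\beta)\neq 0$ for all $a\ge 0$ is exactly the right nonvanishing condition. Once that single-$y$ move is pinned down with explicit coefficients, the passage to general $b$ is a routine (if notationally heavy) double induction, and the fact that the exponent of $x$ in the middle is always $a+c$ is forced by the $\ZZ$-grading by $\deg x$ and $\deg y$: every rewriting move preserves the multidegree, so the only monomials that can appear are $y^i x^{a+c} y^{b-i}$.
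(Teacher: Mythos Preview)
Your claim that relation \eqref{eq.rel-lem2} ``is not actually needed'' is wrong, and this is exactly where the proposal breaks down. Consider the free algebra $\kk\langle x,y\rangle$ modulo the ideal generated by \eqref{eq.rel-lem} alone. In bidegree $(2,2)$ the only relations are obtained by multiplying $x^2y-qxyx+\beta yx^2$ on the left or on the right by a single $y$, giving
\[
x^2y^2 - qxyxy + \beta yx^2y = 0,\qquad yx^2y - qyxyx + \beta y^2x^2 = 0.
\]
Neither of these involves the monomial $xy^2x$; hence $xy^2x$ is linearly independent of $\{x^2y^2, yx^2y, y^2x^2\}$ in this quotient, and the conclusion \eqref{eq.lem-rewrite} already fails for $a=b=c=1$. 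Your proposed inductive step ``rewrite $yx^c$ using the $b=1$ case'' does nothing: applying $x^ayx^c = (\text{combination of }x^{a+c}y,\, yx^{a+c})$ with $a=0$ is the tautology $yx^c=yx^c$, because there is no $x$ to the left of the $y$ to feed into \eqref{eq.rel-lem}.

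The paper's proof agrees with your sketch for $b=1$ (induction on $a$ to get $x^ayx$, then on $c$ to get $x^ayx^c$, with the $S_a$ and identity \eqref{eq.cheby} controlling the coefficients). But for $b\ge 2$ it \emph{must} invoke \eqref{eq.rel-lem2}: one writes $x^ay^bx = x^ay^{b-2}(y^2x) = x^ay^{b-2}(q'yxy-\beta'xy^2)$, which produces terms $x^ay^{b-1}x\cdot y$ and $x^ay^{b-2}x\cdot y^2$ with strictly smaller middle $y$-block, and then the induction hypothesis applies. The general case is a double induction on $b$ and $c$ along the same lines. So your outline is salvageable once you restore \eqref{eq.rel-lem2} to the step $b\ge 2$; without it the argument cannot proceed.
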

\begin{proof}
If any of $a$, $b$ or $c$ equals $0$, then the statement is clearly true, so we can assume that $a,b,c\geq 1$. Now suppose that $b=c=1$. Then we claim that
\begin{equation}\label{eq.lem1}
x^ayx=\frac{S_{a-1}(q,\beta)}{S_a(q,\beta)}x^{a+1}y+\frac{\beta^a}{S_a(q,\beta)}yx^{a+1}.
\end{equation}

We proceed by induction on $a$. For $a=1$ we have, by \eqref{eq.rel-lem},
\[xyx=\frac{1}{q}x^2y+\frac{\beta}{q}yx^2=\frac{S_0}{S_1}x^2y+\frac{\beta^1}{S_1}yx^2.\]
For $a\geq 2$, we have
\begin{align*}
x^ayx \quad
    &\stackrel{\mathclap{\eqref{eq.rel-lem}}}{=} \quad
    x^{a-1}\left(\frac{1}{q}x^2y+\frac{\beta}{q}yx^2\right)
    = \frac{1}{q}x^{a+1}y+\frac{\beta}{q}x^{a-1}yx^2 \\
   &\stackrel{\mathclap{\text{(ind hyp)}}}{=} \quad \frac{1}{q}x^{a+1}y+\frac{\beta}{q}\left(\frac{S_{a-2}}{S_{a-1}}x^ay+\frac{\beta^{a-1}}{S_{a-1}}yx^a\right)x 
    = \frac{1}{q}x^{a+1}y+\frac{\beta S_{a-2}}{qS_{a-1}}x^ayx+\frac{\beta^a}{qS_{a-1}}yx^{a+1}.
\end{align*}
Now by Remark \ref{rem.recur},
\begin{align*}
\frac{qS_{a-1}-\beta S_{a-2}}{qS_{a-1}}x^ayx&=\frac{1}{q}x^{a+1}y+\frac{\beta^a}{qS_{a-1}}yx^{a+1}\\
\frac{S_a}{qS_{a-1}}x^ayx&=\frac{1}{q}x^{a+1}y+\frac{\beta^a}{qS_{a-1}}\\
x^ayx&=\frac{S_{a-1}}{S_a}x^{a+1}y+\frac{\beta^a}{S_a}yx^{a+1}.
\end{align*}

Now we claim that for all $a,c\geq 1$ we have
\begin{equation}\label{eq.lem2}
x^ayx^c=\frac{S_{a-1}(q,\beta)}{S_{a+c-1}(q,\beta)}x^{a+c}y+\frac{\beta^a S_{c-1}(q,\beta)}{S_{a+c-1}(q,\beta)}yx^{a+c}.
\end{equation}

The base case $c=1$ is true from \eqref{eq.lem1}. Now suppose $c\geq 2$, then
\begin{align*}
x^ayx^c \quad
    &\stackrel{\mathclap{\eqref{eq.rel-lem}}}{=} \quad  
    x^{a-1}\left(\frac{1}{q}x^2y+\frac{\beta}{q}yx^2\right)x^{c-1}
    =\frac{1}{q}x^{a+1}yx^{c-1}+\frac{\beta}{q}(x^{a-1}yx)x^c \\
    &\stackrel{\mathclap{\text{(ind hyp)}}}{=} \quad  
    \frac{1}{q}\left(\frac{S_a}{S_{a+c-1}}x^{a+c}y+\frac{\beta^{a+1}S_{c-2}}{S_{a+c-1}}yx^{a+c}\right)+
\frac{\beta}{q}\left(\frac{S_{a-2}}{S_{a-1}}x^{a}y+\frac{\beta^{a-1}}{S_{a-1}}yx^{a}\right)x^c \\
    &= \frac{S_a}{qS_{a+c-1}}x^{a+c}y+\frac{\beta^{a+1}S_{c-2}}{qS_{a+c-1}}yx^{a+c}+
\frac{\beta S_{a-2}}{qS_{a-1}}x^ayx^c+\frac{\beta^{a}}{qS_{a-1}}yx^{a+c}.
\end{align*}
Then by Remark \ref{rem.recur} and \eqref{eq.cheby} we have
\begin{align*}
\frac{qS_{a-1}-\beta S_{a-2}}{q S_{a-1}}x^ayx^c &= \frac{S_a}{qS_{a+c-1}}x^{a+c}y+\frac{\beta^{a+1}S_{c-2}S_{a-1}+\beta^aS_{a+c-1}}{qS_{a-1}S_{a+c-1}}yx^{a+c}\\
\frac{S_a}{qS_{a-1}}x^ayx^c &= \frac{S_a}{qS_{a+c-1}}x^{a+c}y+\beta^a\frac{\beta S_{c-2}S_{a-1}+S_{a+c-1}}{qS_{a-1}S_{a+c-1}}yx^{a+c}\\
\frac{S_a}{qS_{a-1}}x^ayx^c &= \frac{S_a}{qS_{a+c-1}}x^{a+c}y+\beta^a\frac{S_aS_{c-1}}{qS_{a-1}S_{a+c-1}}yx^{a+c}\\
x^ayx^c&=\frac{S_{a-1}}{S_{a+c-1}}x^{a+c}y+\frac{\beta^a S_{c-1}}{S_{a+c-1}}yx^{a+c}.
\end{align*}

Therefore, \eqref{eq.lem-rewrite} is satisfied when $b=1$. Now we want to prove \eqref{eq.lem-rewrite} for $c=1$ and any $b\geq 1$. We have already shown that \eqref{eq.lem-rewrite} is true for $b=c=1$. We proceed by induction on $b$. If $b\geq 2$ we have
\begin{align*}
x^ay^bx
    &= x^ay^{b-2}(y^2x) \quad 
    \stackrel{\mathclap{\eqref{eq.rel-lem2}}}{=} \quad  
    x^a y^{b-2}(q'yxy-\beta'xy^2)
    = q'(x^ay^{b-1}x)y-\beta'(x^ay^{b-2}x)y^2 \\
    &\stackrel{\mathclap{\text{(ind hyp)}}}{=} \quad 
    (\sum_i \gamma'_i y^i x^{a+1}y^{b-1-i})y-(\sum_i \gamma''_i y^i x^{a+1}y^{b-2-i})y^2
    = \sum_i \gamma_i y^ix^{a+1}y^{b-i}.
\end{align*}

Finally, we can conclude the proof of \eqref{eq.lem-rewrite} by double induction on $b$ and $c$. We have already shown it to be true when $b=1$ for any $c$ and when $c=1$ for any $b$, so now suppose $b,c\geq 2$ and we have
\begin{align*}
x^ay^bx^c \quad
    &\stackrel{\mathclap{\eqref{eq.rel-lem2}}}{=} \quad  
    x^ay^{b-2}(q'yxy-\beta'xy^2)x^{c-1} \\
    &= q'(x^ay^{b-1}x)yx^{c-1}-\beta'(x^ay^{b-2}x)y^2x^{c-1} \\
    &\stackrel{\mathclap{\left(\substack{\text{ind hyp} \\ \text{on $b$}}\right)}}{=} \quad
    \left(\sum_i \zeta'_i y^i x^{a+1}y^{b-1-i}\right)yx^{c-1}-\left(\sum_i \zeta''_i y^i x^{a+1}y^{b-2-i}\right)y^2x^{c-1} \\
    &= \sum_i \zeta_i y^i(x^{a+1}y^{b-i}x^{c-1}) \\
    &\stackrel{\mathclap{\left(\substack{\text{ind hyp} \\ \text{on $c$}}\right)}}{=} \quad
    \sum_i\zeta_iy^i\left(\sum_j\xi_j y^j x^{a+c}y^{b-i-j}\right)\\
    &= \sum_{i,j}\zeta_i\xi_j y^{i+j}x^{a+c}y^{b-i-j}\\
    &=\sum_i \gamma_iy^i x^{a+c}y^{b-i}.\qedhere
\end{align*}
\end{proof}

\begin{lemma}\label{lem.monomial-span}
Let $A$ be a TGWA of type $A_2$ with minimal polynomials as in \eqref{eq.p12p21}. 
\begin{enumerate}
\item \label{span1} If $S_a(-\lambda_1,\lambda_2)\neq 0$ for all $a\geq 0$, then the monomials
\[(X_2^+)^a(X_1^+)^b(X_2^+)^c, \quad (X_1^+)^a(X_2^-)^b, \quad (X_1^-)^a (X_2^+)^b, \quad (X_2^-)^a(X_1^-)^b(X_2^-)^c\]
with $a,b,c\geq 0$ generate $A$ as a left (and as a right) $R$-module.

\item \label{span2} If $S_a(-\eta_1,\eta_2)\neq 0$ for all $a\geq 0$, then the monomials
\[(X_1^+)^a(X_2^+)^b(X_1^+)^c, \quad (X_1^+)^a(X_2^-)^b, \quad (X_1^-)^a (X_2^+)^b, \quad (X_1^-)^a(X_2^-)^b(X_1^-)^c\]
with $a,b,c\geq 0$ generate $A$ as a left (and as a right) $R$-module.
\end{enumerate}
\end{lemma}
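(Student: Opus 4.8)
The plan is to combine the $R$-module generation of $A$ by reduced monomials (Theorem \ref{thm.props}) with the rewriting rule of Lemma \ref{lem.rewrite}, applied separately to the positive and the negative generators. First I would enumerate the reduced monomials in the rank-$2$ case: since the only indices are $1$ and $2$, the disjointness condition in \eqref{eq.reduced} forces a reduced monomial to be a word in $\{X_1^-,X_2^-\}$, a word in $\{X_1^+,X_2^+\}$, a monomial $(X_1^-)^a(X_2^+)^b$, or a monomial $(X_2^-)^a(X_1^+)^b$. These span $A$ as a left and as a right $R$-module by Theorem \ref{thm.props}, and by \eqref{eq.tgwc3} the last family consists of scalar multiples of the monomials $(X_1^+)^b(X_2^-)^a$ already in our list. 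So for part \eqref{span1} it suffices to show that every word in $\{X_1^+,X_2^+\}$ lies in the $\kk$-span of the monomials $(X_2^+)^a(X_1^+)^b(X_2^+)^c$ and every word in $\{X_1^-,X_2^-\}$ in the $\kk$-span of the $(X_2^-)^a(X_1^-)^b(X_2^-)^c$; part \eqref{span2} is the same statement with the indices $1$ and $2$ interchanged.

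For the positive side of \eqref{span1} I would apply Lemma \ref{lem.rewrite} with $x=X_1^+$, $y=X_2^+$: relations \eqref{eq.A2rel1} and \eqref{eq.A2rel3} are exactly \eqref{eq.rel-lem} and \eqref{eq.rel-lem2}, and the parameter occurring in the hypothesis of that lemma is $q=-\lambda_1\mu_{12}\inv$, $\beta=\lambda_2\mu_{12}^{-2}$. Here $\lambda_2\neq 0$ because $p_{12}$ is the minimal polynomial of the automorphism $\sigma_1$ restricted to the finite-dimensional $\sigma_1$-stable space $V_{12}$ of Definition \ref{def:k-finite}, so has nonzero constant term; similarly $\eta_2\neq 0$. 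The recurrence of Remark \ref{rem.recur} immediately gives the homogeneity $S_a(tq',t^2\beta')=t^aS_a(q',\beta')$, whence $S_a(q,\beta)=\mu_{12}^{-a}S_a(-\lambda_1,\lambda_2)$ and the hypothesis $S_a(-\lambda_1,\lambda_2)\neq 0$ passes to $S_a(q,\beta)\neq 0$. Lemma \ref{lem.rewrite} then gives, for all $a,b,c\geq 0$,
\[(X_1^+)^a(X_2^+)^b(X_1^+)^c=\sum_i\gamma_i\,(X_2^+)^i(X_1^+)^{a+c}(X_2^+)^{b-i},\qquad\gamma_i\in\kk.\]

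Using this I would induct on the number of maximal $X_1^+$-blocks of an arbitrary word in $X_1^+,X_2^+$: if there are at least two such blocks, the leftmost two together with the intervening $X_2^+$'s form a subword $(X_1^+)^{c_1}(X_2^+)^{b_1}(X_1^+)^{c_2}$, and substituting the displayed identity expresses the word as a $\kk$-linear combination of words each having strictly fewer $X_1^+$-blocks; the base case of at most one block is precisely a monomial $(X_2^+)^a(X_1^+)^b(X_2^+)^c$. For the negative side of \eqref{span1} I would invoke Lemma \ref{lem.rewrite} with $x=X_1^-$, $y=X_2^-$; dividing \eqref{eq.A2rel2} and \eqref{eq.A2rel4} by their leading coefficients puts them in the form \eqref{eq.rel-lem}, \eqref{eq.rel-lem2} with $q=-\lambda_1\mu_{21}/\lambda_2$, $\beta=\mu_{21}^2/\lambda_2$, and combining the homogeneity above with the reciprocal identity $S_a(q',\beta')=(\beta')^aS_a(q'/\beta',1/\beta')$ (immediate from Definition \ref{def.cheb-poly}) yields $S_a(q,\beta)=(\mu_{21}/\lambda_2)^aS_a(-\lambda_1,\lambda_2)$, so the hypothesis again passes through and the same block-merging induction works. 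This shows the four families of \eqref{span1} span every reduced monomial, hence span $A$ over $R$ on both sides; part \eqref{span2} follows by interchanging $1$ and $2$ throughout, which exchanges $p_{12}$ and $p_{21}$, so there the hypothesis becomes $S_a(-\eta_1,\eta_2)\neq 0$.

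The step I expect to be the main obstacle is the block-merging induction: one must verify that substituting the rewriting identity into the leftmost pair of $X_1^\pm$-blocks (resp.\ $X_2^\pm$-blocks for \eqref{span2}) strictly decreases the number of such blocks in every resulting summand, including the degenerate situations where the middle power of $y$ collapses — this is where the precise shape of \eqref{eq.lem-rewrite}, in which $x^{a+c}$ appears as a single block flanked only by powers of $y$, is essential. The remainder — matching \eqref{eq.A2rel1}--\eqref{eq.A2rel4} with the hypotheses of Lemma \ref{lem.rewrite} and tracking the scaling of the $S_a$-nonvanishing condition under the substitutions above — is routine bookkeeping.
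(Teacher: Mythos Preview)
Your proposal is correct and follows essentially the same route as the paper: invoke Theorem~\ref{thm.props}(2) to reduce to reduced monomials, then apply Lemma~\ref{lem.rewrite} with $(x,y)=(X_1^+,X_2^+)$ and $(x,y)=(X_1^-,X_2^-)$ to collapse words into the desired three-factor form, handling part~\eqref{span2} by the $1\leftrightarrow 2$ symmetry. One point where you are actually more careful than the paper: for the negative generators, relation~\eqref{eq.A2rel2} has the $yx^2$ term as the leading monomial rather than $x^2y$, so to match the template of Lemma~\ref{lem.rewrite} you correctly divide through and obtain $(q,\beta)=(-\lambda_1\mu_{21}/\lambda_2,\ \mu_{21}^2/\lambda_2)$, then use the reciprocal identity for $S_a$ to carry the nonvanishing hypothesis over---the paper simply states the rescaled $S_a$ values without acknowledging this renormalisation, though the conclusion is the same since both conditions are equivalent to $S_a(-\lambda_1,\lambda_2)\neq 0$.
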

\begin{proof}
By Theorem \ref{thm.props} (2), the reduced monomials generate $A$ over $R$ both as a left and a right module. If $S_a(-\lambda_1,\lambda_2)\neq 0$, we have that 
\begin{align*}
S_a(-\lambda_1\mu_{12}\inv,\lambda_2\mu_{12}^{-2}) 
    &=\mu_{12}^{-a}S_a(-\lambda_1,\lambda_2)\neq 0, \\
S_a(-\lambda_1\mu_{21}\inv,\lambda_2\mu_{21}^{-2})
    &=\mu_{21}^{-a}S_a(-\lambda_1,\lambda_2)\neq 0.
\end{align*}
Hence, we can apply Lemma \ref{lem.rewrite} to $x=X_1^+$, $y=X_2^+$ by \eqref{eq.A2rel1} and \eqref{eq.A2rel3} to write all monomials involving only $X_1^+$ and $X_2^+$ in the form $(X_2^+)^a(X_1^+)^b(X_2^+)^c$. We also do the same thing with $x=X_1^-$, $y=X_2^-$ by \eqref{eq.A2rel2} and \eqref{eq.A2rel4} to write all monomials involving only $X_1^-$ and $X_2^-$ in the form $(X_2^-)^a(X_1^-)^b(X_2^-)^c$. This proves \eqref{span1}. The proof of part \eqref{span2} is identical, exchanging the role of $X_1^+$ with $X_2^+$ and the role of $X_1^-$ with $X_2^-$.
\end{proof}

\begin{remark}\label{rem.split}
If $\chr{\kk}=0$ and $p_{12}(x)\in\QQ[x]$, we can give easier conditions that imply that $S_a(-\lambda_1,\lambda_2)\neq 0$ (the following also applies if $p_{12}(x)\in\RR[x]$, other conditions can also be given if the coefficients are in any field $\mathbb{F}$ with $\QQ\subset \mathbb{F}\subset \CC$). 

First of all, notice that $\lambda_2\neq 0$ because $\sigma_1$ is invertible. If $\lambda_2<0$, and $a$ is even (resp. odd) then $S_a(-\lambda_1,\lambda_2)$ is a positive linear combination of even (resp. odd) monomials in $\lambda_1$, including a constant term (no such term when $a$ is odd). It follows that, when $\lambda_2<0$, $S_a(-\lambda_1,\lambda_2)>0$ for all $a$ even, and for $a$ odd we have $S_a(-\lambda_1,\lambda_2)=0$ if and only if $\lambda_1=0$. 

If $\lambda_2>0$, by  \eqref{eq.chebyshev} we have 
\[S_a(-\lambda_1,\lambda_2)=0 \iff \lambda_2^{a/2}U_a\left(\frac{-\lambda_1}{2\lambda_2^{1/2}}\right)=0\iff U_a\left(\frac{-\lambda_1}{2\lambda_2^{1/2}}\right)=0.\]
The roots of the Chebyshev polynomial of second kind $U_a$ over the complex numbers are all real and they are 
\[\left\{\left. \cos\left(\frac{k}{a+1}\pi\right)~\right|~k=1,\ldots,a \right\}\subset (-1,1).\]
In particular, we will have $S_a(-\lambda_1,\lambda_2)\neq 0$ if $\frac{-\lambda_1}{2\lambda_2^{1/2}}\not\in(-1,1)$.
\begin{align*}\left|\frac{-\lambda_1}{2\lambda_2^{1/2}} \right|\geq 1 
\iff \left|-\lambda_1\right| \geq 2\left|\lambda_2\right|^{1/2} 
\iff \left|\lambda_1\right|\geq 2\left|\lambda_2\right|^{1/2} 
\iff \left|\lambda_1\right|^2 \geq 4\left|\lambda_2\right| \\
\iff \lambda_1^2\geq 4 \lambda_2 
\iff \lambda_1^2-4\lambda_2 \geq 0\iff p_{12}(x) \text{ splits over $\RR$.}\end{align*}
In conclusion, if $p_{12}(x)\in\QQ[x]$ splits over $\RR$ (regardless of whether $\RR\subset \kk$), $p_{12}(x)=(x-z_1)(x-z_2)$, with $z_1,z_2\in\RR$, we have $\lambda_1=-(z_1+z_2)$ and $\lambda_2=z_1z_2$. Then $S_a(-\lambda_1,\lambda_2)\neq 0$ for all $a\geq 0$ if $z_1\neq -z_2$.
\end{remark}

\begin{example}
Due to Remark \ref{rem.split}, we have that $S_a(-\lambda_1,\lambda_2)\neq 0\neq S_a(-\eta_1,\eta_2)$ for all $a\geq 0$ in the cases of Example \ref{ex.A2b}. The same also applies to the cases of Example \ref{ex.A2}, as long as $p\neq-1$.
\end{example}

\begin{hypothesis}\label{hyp.A2auto}
Assume $\phi_1$ and $\phi_2$ are automorphisms of $A$ satisfying Hypothesis \ref{hyp.auto}. For $\phi_1$, we assume $\alpha_1=1$ and for $\phi_2$ we assume $\alpha_2=1$.
\end{hypothesis}

\begin{theorem}\label{thm.A2}
Let $A=\cA_\mu(R,\sigma,t)$ be a TGWA of type $A_2$. Assume $\phi_1$ and $\phi_2$ are as in Hypothesis \ref{hyp.A2auto}.
\begin{enumerate}
\item \label{A2-1} If $S_a(-\eta_1,\eta_2)\neq 0$ for all $a\geq 0$, then $A^{\phi_1}$ is a regular, $\nu$-consistent rank $2$ TGWA.
\item \label{A2-2} If $S_a(-\lambda_1,\lambda_2)\neq 0$ for all $a\geq 0$, then $A^{\phi_2}$ is a regular, $\nu$-consistent rank $2$ TGWA.
\end{enumerate}
\end{theorem}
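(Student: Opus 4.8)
The plan is to deduce this from Theorem~\ref{thm.fix} by identifying the subalgebra it produces with the whole fixed ring. First, the two statements are mirror images of each other: interchanging the roles of the two generators $X_1^\pm\leftrightarrow X_2^\pm$ sends a type $A_2$ TGWA carrying $\phi_1$ (which fixes $X_1^\pm$) to a type $A_2$ TGWA carrying an automorphism of the shape of $\phi_2$, interchanges $p_{12}$ with $p_{21}$ --- hence $(\lambda_1,\lambda_2)$ with $(\eta_1,\eta_2)$ --- and interchanges parts \eqref{span1} and \eqref{span2} of Lemma~\ref{lem.monomial-span}; so it is enough to prove \eqref{A2-1}. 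Set $\phi=\phi_1$, so $\alpha_1=1$, $m_1=1$, $m_2=\ord(\alpha_2)$, $\ell=\ord(\phi\restrict{R})$, with $\gcd(\ell,m_2)=1$. By Theorem~\ref{thm.fix} and Lemma~\ref{lem.tgwc}, the subalgebra $B\subseteq A^\phi$ generated by $R^\phi$, $X_1^\pm=(X_1^\pm)^{m_1}$, and $(X_2^\pm)^{m_2}$ is isomorphic to the rank $2$ TGWA $\cA_\nu(R^\phi,\tau,s)$, where $\tau=(\sigma_1,\sigma_2^{m_2})$, $s=(t_1,\prod_{k=0}^{m_2-1}\sigma_2^{-k}(t_2))$, $\nu_{12}=\mu_{12}^{m_2}$, $\nu_{21}=\mu_{21}^{m_2}$; it is regular and satisfies \eqref{eq.cons1} because $A$ does, and since $B$ has rank $2$, equation \eqref{eq.cons2} is vacuous, so $B$ is $\nu$-consistent by \cite[Theorem~6.2]{FH2}. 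Everything therefore reduces to proving $B=A^\phi$.

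For that equality I would argue degree by degree. Since $\phi$ rescales the $X_i^\pm$ and fixes $A_\bzero=R$ setwise, it preserves the $\ZZ^2$-grading, so $A^\phi=\bigoplus_g(A_g)^\phi$ and it suffices to show $(A_g)^\phi\subseteq B$ for each $g=(g_1,g_2)$. Factor $\phi=\theta\psi=\psi\theta$, where $\psi$ fixes $R$ and $X_1^\pm$ with $\psi(X_2^\pm)=\alpha_2^{\pm1}X_2^\pm$ (a well-defined automorphism of order $m_2$), and $\theta$ fixes all $X_i^\pm$ and restricts to $\phi\restrict{R}$ on $R$ (well defined of order $\ell$, since $\sigma_i\phi=\phi\sigma_i$ on $R$ by Remark~\ref{rem.phisig}). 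As $\psi$ acts on $A_g$ by the scalar $\alpha_2^{g_2}$, any nonzero $z\in(A_g)^\phi$ satisfies $z=\theta(\psi(z))=\alpha_2^{g_2}\theta(z)$, hence $\theta(z)=\alpha_2^{-g_2}z$, and iterating $\theta^\ell=\id$ forces $\alpha_2^{\ell g_2}=1$, i.e.\ $m_2\mid g_2$. Thus $(A_g)^\phi=0$ unless $m_2\mid g_2$ --- consistent with $B\cap A_g=0$ there, the graded inclusion $B\hookrightarrow A$ having $B_{(d_1,d_2)}\subseteq A_{(d_1,m_2d_2)}$ --- and when $g_2=m_2k$ we get $\psi(z)=z=\theta(z)$, so $(A_g)^\phi=(A_g)^\theta$. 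Hence it remains to prove $(A_g)^\theta\subseteq B$ for every $g=(g_1,m_2k)$.

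Now Lemma~\ref{lem.monomial-span}\eqref{span2} enters --- this is the only use of the hypothesis $S_a(-\eta_1,\eta_2)\ne0$: $A_g$ is generated as a left $R$-module by monomials of degree $g$ of the four listed forms, and when $m_2\mid g_2$ every such monomial lies in $B$, since its $X_2$-part is a single block $(X_2^{\pm})^{|g_2|}=((X_2^{\pm})^{m_2})^{|k|}$ and the remaining factors are powers of $X_1^{\pm}$. When $g_2=0$, or when $g_1$ and $g_2$ have opposite signs (or $g_1=0$), $A_g$ is in fact a free left $R$-module of rank one on one such monomial $M$ (it lies in the rank-one GWA $A^{(1)}$, respectively is the unique reduced monomial of degree $g$; freeness via \cite[Theorem~4.3]{HO}), so $z\in(A_g)^\theta$ equals $rM$ with $\theta(z)=\phi(r)M=z$, whence $r\in R^\phi$ and $z\in B$.

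The remaining and hardest case is $g_1,g_2>0$ (the case $g_1,g_2<0$ being its mirror under $X_i^+\leftrightarrow X_i^-$). Here Lemma~\ref{lem.monomial-span}\eqref{span2} gives only $A_g=\sum_{a=0}^{g_1}R\,P_a$ with $P_a=(X_1^+)^a(X_2^+)^{g_2}(X_1^+)^{g_1-a}\in B\cap A_g$, the $P_a$ being possibly $R$-linearly dependent, and one must show that any $\theta$-fixed $z=\sum_a r_aP_a$ can be rewritten with coefficients in $R^\phi$. The clean way is the type $A_2$ analogue of the rank-one freeness used in Theorem~\ref{thm.fix1}: establish that $A_g$ is a free left $R$-module with a basis of monomials, each $X_2$-segregated into a block of length $m_2k$ and so lying in $B$; then $\theta(\sum_i r_iw_i)=\sum_i\phi(r_i)w_i=\sum_i r_iw_i$ gives $r_i\in R^\phi$ and $z\in B$ at once. (Failing an explicit such basis, one can instead run a descent argument for $\langle\phi\rangle$ acting on a presentation $\pi\colon R^{g_1+1}\onto A_g$, $e_a\mapsto P_a$: $\theta$-invariance of $z$ says $\phi(v)-v\in\ker\pi$ for a lift $v$, and one needs $v$ to be $\ker\pi$-congruent to an element of $(R^\phi)^{g_1+1}$, a vanishing-$H^1(\langle\phi\rangle,-)$ statement for the $R$-modules in sight.) I expect this control of the $R$-module structure of $A_g$ to be the main obstacle; granting it, $(A_g)^\theta\subseteq B$ and therefore $B=A^\phi$, which finishes the proof.
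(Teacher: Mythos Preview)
Your strategy is the paper's: invoke Theorem~\ref{thm.fix} for the subalgebra $B$, then prove $B=A^{\phi_1}$ via the spanning monomials of Lemma~\ref{lem.monomial-span}\eqref{span2}. The paper's execution is considerably shorter --- no factorization $\phi_1=\theta\psi$, no sign-of-$(g_1,g_2)$ case split, no appeal to freeness or group cohomology. It just notes that each spanning monomial is a $\phi_1$-eigenvector with eigenvalue $\alpha_2^{\pm b}$ (where $b$ is the exponent on $X_2^\pm$), hence fixed iff $m_2\mid b$, and then asserts in one sentence that, since $\gcd(\ell,m_2)=1$, $A^{\phi_1}$ is spanned over $R^{\phi_1}$ by the fixed monomials.

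You correctly flag that last inference as nontrivial, but your proposed fixes (a free $R$-basis of $A_g$, or an $H^1(\langle\phi\rangle,-)$-vanishing argument) are heavier than needed and you leave them incomplete. The intended one-line argument is Reynolds averaging over $\langle\phi_1\rangle$: if $z=\sum_M r_M M$ is $\phi_1$-fixed with $M$ ranging over the spanning monomials, then rewriting $z=\tfrac{1}{\ell m_2}\sum_{k}\phi_1^k(z)$ gives the $M$-coefficient $\tfrac{1}{\ell m_2}\sum_k\phi^k(r_M)\alpha_2^{\pm k b_M}$; the coprimality $\gcd(\ell,m_2)=1$ is precisely what makes the inner geometric sum vanish when $m_2\nmid b_M$ and collapse to the $R^\phi$-valued average $\tfrac{1}{\ell}\sum_i\phi^i(r_M)$ when $m_2\mid b_M$. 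No $R$-linear independence of the $P_a$ is required --- and indeed \eqref{eq.A2rel1} already gives a $\kk$-relation among them in degree $(2,1)$, so the free-basis route would have to throw some of them away. The averaging does need the group order invertible in $\kk$, which the paper does not state; if you insist on arbitrary characteristic you would genuinely need something like your freeness argument, but that goes beyond what the paper supplies.
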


\begin{remark}
Notice that $A^{\phi_1}$ and $A^{\phi_2}$ are in general no longer of type $A_2$. In fact, if $A$ is a Kleinian fiber product as in Example \ref{ex.rank2}, by Remark \ref{rem.klein} it is of type $A_2$ if and only if $t=(p_1,p_2)$ with $\deg p_1=\deg p_2=1$. However $A^{\phi_{3-i}}=\mathcal{A}_\nu(R,\tau,s)$ is again a Kleinian fiber product but with $s_{i}=\prod_{k=0}^{m_i-1} \sigma^{-k}(p_i)$ which is a polynomial of degree $m_i$.
\end{remark}

\begin{proof}[Proof of Theorem \ref{thm.A2}]
We prove \eqref{A2-1}. The proof for \eqref{A2-2} is analogous. Let $B$ be the subalgebra of $A^{\phi_1}$, as defined in Theorem \ref{thm.fix}. We want to show that $A^{\phi_1}=B$, which will prove the result (since $A^{\phi_1}$ is a TGWA of rank $2$, \eqref{eq.cons1} is sufficient for consistency). By Lemma \ref{lem.monomial-span} \eqref{span2}, since $S_a(-\eta_1,\eta_2)\neq 0$ we have that the monomials 
\[(X_1^+)^a(X_2^+)^b(X_1^+)^c, \quad (X_1^+)^a(X_2^-)^b, \quad (X_1^-)^a (X_2^+)^b, \quad (X_1^-)^a(X_2^-)^b(X_1^-)^c\]
span $A$ over $R$. Hence, since $(\ell,m_2)=1$, $A^{\phi_1}$ will be spanned over $R^{\phi_1}$ by the monomials fixed by $\phi_1$. We have
\begin{align*}
   \phi_1 (X_1^+)^a(X_2^+)^b(X_1^+)^c&= \alpha_2^b(X_1^+)^a(X_2^+)^b(X_1^+)^c, \\
   \phi_1 (X_1^+)^a(X_2^-)^b &= \alpha_2^{-b}(X_1^+)^a(X_2^-)^b,\\ 
   \phi_1 (X_1^-)^a (X_2^+)^b &= \alpha_2^b (X_1^-)^a (X_2^+)^b,\\ 
   \phi_1 (X_1^-)^a(X_2^-)^b(X_1^-)^c &= \alpha_2^{-b}(X_1^-)^a(X_2^-)^b(X_1^-)^c,
\end{align*}
so those monomials are fixed if and only $m_2\mid b$, if and only if those are actually monomials in $X_1^\pm$ and $(X_2^\pm)^{m_2}$. It follows that $A^{\phi_1}=B$ as desired.
\end{proof}

\begin{corollary}\label{cor.A2tensor}
For $i=1,\hdots,d$, let $A(i)$ be a TGWA of type $A_2$ and let $\phi_i$ be an automorphism of $A(i)$ satisfying Hypothesis \ref{hyp.A2auto}. Set  $A=\bigotimes_{i=1}^d A(i)$ and $\phi = \phi_1 \tensor \cdots \tensor \phi_d$. If $\gcd(\ord(\phi_i),\ord(\phi_j))=1$ for all $i \neq j$, then $A^\phi=\bigotimes_{i=1}^d A(i)^{\phi_i}$ is a $\kk$-finitisitic TGWA.
\end{corollary}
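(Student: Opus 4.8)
The plan is to obtain the statement by stringing together results already established: the identification of the fixed ring of a tensor product (Proposition~\ref{prop.tensor}), the type $A_2$ structure theorem for fixed rings (Theorem~\ref{thm.A2}), and the stability of the $\kk$-finitistic property under tensor products (Theorem~\ref{thm.tensor} together with Corollaries~\ref{cor.tensorprops} and~\ref{cor.kfin}).

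First I would note that Hypothesis~\ref{hyp.A2auto} is a special case of Hypothesis~\ref{hyp.auto}, so Proposition~\ref{prop.tensor} applies directly. Because the integers $\ord(\phi_i)$ are pairwise coprime, the Chinese Remainder Theorem identifies the cyclic group $\grp{\phi}$ with the internal product of the $\grp{\phi_i}$ inside $\prod_i \Aut(A(i))$; since each $A(j)$ is free as a $\kk$-vector space, taking $\grp{\phi_i}$-invariants commutes with tensoring over $\kk$, and one concludes $A^\phi=\bigotimes_{i=1}^d A(i)^{\phi_i}$ (no hypothesis on $\chr\kk$ is needed, and the coprimality assumption is used precisely here).

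Next I would identify the tensor factors and reassemble them. Fix $i$. Since $A(i)$ is of type $A_2$, it is $\kk$-finitistic, and $\phi_i$ is one of the two kinds of automorphism in Hypothesis~\ref{hyp.A2auto}; applying the corresponding case of Theorem~\ref{thm.A2} (with its nonvanishing hypothesis on the relevant $S_a$, which I would carry along for each factor) shows that the subalgebra $B_i$ of $A(i)^{\phi_i}$ generated by $R^{\phi_i}$ and the $(X_j^{\pm})^{m_j}$ equals $A(i)^{\phi_i}$ and is a regular, $\nu^{(i)}$-consistent TGWA of rank $2$, and Corollary~\ref{cor.kfin} shows that $B_i$ is $\kk$-finitistic. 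Now iterating Theorem~\ref{thm.tensor} over $B_1,\dots,B_d$ gives that $\bigotimes_{i=1}^d B_i$ is a regular, $\eta$-consistent TGWA of rank $2d$ (with $\eta$ block-diagonal, the blocks being the $\nu^{(i)}$), and iterating Corollary~\ref{cor.tensorprops} shows it is $\kk$-finitistic, with Cartan matrix the block-diagonal sum of the $C_{B_i}$. Combined with the first step, $A^\phi=\bigotimes_{i=1}^d A(i)^{\phi_i}$ is a $\kk$-finitistic TGWA.

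This is mostly bookkeeping, and I do not anticipate a genuine obstacle. The points that need care are keeping track of the three families of parameter matrices (the $\mu^{(i)}$, the fixed-ring matrices $\nu^{(i)}$, and the combined matrix $\eta$) as they pass through Theorems~\ref{thm.A2} and~\ref{thm.tensor}, and keeping the nonvanishing hypotheses of Theorem~\ref{thm.A2} in force for every $i$, since those are exactly what make Lemma~\ref{lem.monomial-span} available and hence force $B_i=A(i)^{\phi_i}$ rather than only a proper subalgebra.
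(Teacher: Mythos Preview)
Your proposal is correct and follows essentially the same route as the paper's own proof, which simply cites Corollary~\ref{cor.tensorprops}, Proposition~\ref{prop.tensor}, and Theorem~\ref{thm.A2}; you have unpacked those citations in the natural order and added Corollary~\ref{cor.kfin} and Theorem~\ref{thm.tensor} explicitly, which is harmless since they underlie Corollary~\ref{cor.tensorprops} anyway. Your care about carrying the nonvanishing hypotheses on the $S_a$ from Theorem~\ref{thm.A2} is well placed, since the corollary's statement leaves them implicit.
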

\begin{proof}
This follows from Corollary \ref{cor.tensorprops}, Proposition \ref{prop.tensor}, and Theorem \ref{thm.A2}.
\end{proof}

It readily follows from the above that one could replace one of the $A(i)$ in Corollary \ref{cor.A2tensor} with a TGWA of type $(A_1)^n$ and an appropriate automorphism $\phi_i$ satisfying Hypothesis \ref{hyp.auto}.

\subsection{The noetherian property}

It is unknown whether rank 2 TGWAs, or even those of type $A_2$, over a noetherian base ring are noetherian. Here we show that a certain family of rank 2 TGWAs, obtained through taking invariants, satisfies this property. The method is related to the down-up algebras of Benkart and Roby \cite{BRdu}, and uses the following technique which is well-known.

\begin{remark}\label{rmk.hberg}
The enveloping algebra of the Heisenberg Lie algebra can be presented as 
\[T = \kk\langle x,y,z ~|~ xz=zx, yz=zy, xy-yx=z\rangle.\]
We can realize $T$ as the skew polynomial ring $\kk[y,z][x;\delta]$ where $\delta(y)=z$ and $\delta(z)=0$. Substituting $z=xy-yx$ into the relations $0=xz-zx=yz-zy$ gives
\begin{align*}
0 &= xz-zx = x(xy-yx)-(xy-yx)x = x^2y-2xyx+yx^2 \\
0 &= yz-zy = y(xy-yx)-(xy-yx)y = -(y^2x-2yxy+xy^2).
\end{align*}
Hence, we can alternatively present $T$ as a quotient of the free algebra $\kk\langle x,y \rangle$ modulo these two relations. Here we generalize to the case of TGWAs of type $A_2$ presented in Example \ref{ex.A2}.
\end{remark}

\begin{theorem}\label{thm.ore2}
Let $A$ be the TGWA in Example \ref{ex.A2} and assume $\mu=\mu_{12}=1$ and $p=1$. Then $A$  is a quotient of an iterated Ore extension and so $A$ is (left/right) noetherian.
\end{theorem}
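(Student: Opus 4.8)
The plan is to exploit a feature special to the case $p=\mu=1$: the element
\[
Z:=X_1^+X_2^+-X_2^+X_1^+\in A
\]
is a \emph{central unit}, and everything follows from this. First I would normalize $\beta=1$ using the change of variables at the end of Example \ref{ex.A2} (legitimate since $(p,\beta)\neq(1,0)$ and $p=1$ force $\beta\neq 0$); then $A$ is exactly the algebra of Example \ref{ex.A2simple}. In particular $A$ is a domain by Theorem \ref{thm.props}(1), and it satisfies the ``$+$'' Serre relations $(X_i^+)^2X_j^+-2X_i^+X_j^+X_i^++X_j^+(X_i^+)^2=0$ and the analogous ``$-$'' relations (these are \eqref{eq.A2rel1}--\eqref{eq.A2rel4} with $\lambda_1=\eta_1=-2$, $\lambda_2=\eta_2=1$, $\mu_{12}=\mu_{21}=1$). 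By the computation in Remark \ref{rmk.hberg}, the two ``$+$'' Serre relations are precisely the assertions $[X_1^+,Z]=[X_2^+,Z]=0$, so $Z$ is central in $\grp{X_1^+,X_2^+}$. A short direct computation using $X_i^{\pm}r=\sigma_i^{\pm1}(r)X_i^{\pm}$, $X_1^+X_2^-=X_2^-X_1^+$, $X_2^+X_1^-=X_1^-X_2^+$, $X_i^-X_i^+=t_i$ and $X_i^+X_i^-=\sigma_i(t_i)$ then gives $[h,Z]=0$ and, crucially,
\[
X_1^-Z=ZX_1^-=X_2^+,\qquad X_2^-Z=ZX_2^-=X_1^+,
\]
so $Z$ is central in $A$. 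An analogous computation shows $W:=X_2^-X_1^--X_1^-X_2^-$ is central with $X_1^+W=X_2^-$ and $X_2^+W=X_1^-$. Combining these identities yields $X_1^+X_2^+=(h+1)Z$, $X_2^+X_1^+=hZ$ and $X_1^-X_2^-=hW$, whence
\[
h^2ZW=(hZ)(hW)=X_2^+X_1^+X_1^-X_2^-=h\,X_2^+X_2^-=h^2,
\]
and since $A$ is a domain we conclude $ZW=1$.

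With this in hand I would rewrite $A$ using the three generators $X_1^-,X_2^-,Z$: indeed $X_1^+=ZX_2^-$, $X_2^+=ZX_1^-$ and $h=X_1^-X_1^+=ZX_1^-X_2^-$, so these three elements generate $A$ as a $\kk$-algebra, with $Z$ central, $X_2^-X_1^--X_1^-X_2^-=W$ and $ZW=1$. This description is manifestly a homomorphic image of an iterated Ore extension: let $\delta$ be the derivation of the polynomial ring $\kk[Z_1,Z_2,u]$ determined by $\delta(Z_1)=\delta(Z_2)=0$, $\delta(u)=Z_2$, and set $T:=\kk[Z_1,Z_2,u][v;\delta]$, which is an iterated Ore extension of $\kk$. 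The assignment $Z_1\mapsto Z$, $Z_2\mapsto W$, $u\mapsto X_1^-$, $v\mapsto X_2^-$ extends to a $\kk$-algebra homomorphism $\pi\colon T\to A$: it is well defined on $\kk[Z_1,Z_2,u]$ because $Z,W,X_1^-$ pairwise commute in $A$, and it is compatible with the $v$-extension because $[X_2^-,Z]=[X_2^-,W]=0$ and $[X_2^-,X_1^-]=W$. The image of $\pi$ contains $Z,W,X_1^-,X_2^-$, hence also $ZX_2^-=X_1^+$, $ZX_1^-=X_2^+$ and $ZX_1^-X_2^-=h$, so $\pi$ is surjective. Thus $A\iso T/\ker\pi$ is a quotient of the iterated Ore extension $T$. (One checks $Z_1Z_2-1\in\ker\pi$, and in fact $\ker\pi=(Z_1Z_2-1)$, so $A\iso\kk[Z^{\pm1}][X_1^-][X_2^-;\delta']$ with $\delta'(X_1^-)=Z^{-1}$; only the quotient statement is needed here.) Finally, since $\kk$ is noetherian, $T$ is left and right noetherian by repeated application of the (skew) Hilbert basis theorem \cite{GW}, and hence so is its quotient $A$.

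The main obstacle is the first paragraph: recognizing that $Z$ — which Remark \ref{rmk.hberg} already shows is central in $\grp{X_1^+,X_2^+}$ — is in fact central in all of $A$ and is invertible, with inverse the analogously defined ``$-$'' element $W$. The remaining steps, passing to the generators $X_1^-,X_2^-,Z$ and realizing $A$ as a quotient of an iterated Ore extension, are then routine, and the noetherian conclusion is immediate.
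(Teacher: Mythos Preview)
Your argument is correct, and it takes a genuinely different route from the paper's. The paper keeps all six TGWA generators $h,z,X_1^{\pm},X_2^{\pm}$ in play: it first forms the down-up algebra $S$ in $X_1^-,X_2^-$ over $\kk[h,z]$ (invoking \cite{KMP} for noetherianity), adjoins $X_1^+$ and $X_2^+$ as two further Ore variables with $\delta_2(X_1^+)=-z$, and then checks that substituting $z=X_1^+X_2^+-X_2^+X_1^+$ recovers $A$ as the quotient by $(X_1^-X_1^+-t_1,\,X_2^-X_2^+-t_2)$. Your key observation, that the very element $Z=X_1^+X_2^+-X_2^+X_1^+$ is a \emph{central unit} with inverse $W=X_2^-X_1^--X_1^-X_2^-$, is sharper: it collapses the generating set to $X_1^-,X_2^-,Z^{\pm1}$ with the single Heisenberg-type relation $[X_2^-,X_1^-]=Z^{-1}$, and hence gives not merely a quotient of an iterated Ore extension but an honest isomorphism $A\cong\kk[Z^{\pm1}][X_1^-][X_2^-;\delta']$. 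This is more elementary (no appeal to \cite{KMP}), yields a strictly stronger structural statement, and makes noetherianity immediate. The paper's approach, by contrast, stays closer to the standard TGWA presentation and is perhaps more suggestive of how one might attack other type $A_2$ cases (e.g.\ $p\neq 1$ in Example~\ref{ex.A2}) where no central unit like $Z$ is available.
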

\begin{proof}
In this case, the generators of the canonical graded ideal are
\begin{align*}
&(X_1^{\pm})^2X_2^{\pm}-2 X_1^{\pm}X_2^{\pm}X_1^{\pm} + X_2^{\pm}(X_1^{\pm})^2, \\
&(X_2^{\pm})^2X_1^{\pm}-2 X_2^{\pm}X_1^{\pm}X_2^{\pm}+X_1^{\pm}(X_2^{\pm})^2.
\end{align*}

Let $z$ be a new central indeterminate and set $R'=R[z]$. Set $S$ to be the $\kk$-algebra generated over $R'$ by $X_1^-$ and $X_2^-$ satisfying the approprite Serre relations above. Then $S$ is a down-up algebra which is noetherian by \cite{KMP}. We will build an iterated Ore extension over $S$.

Extend the automorphism $\sigma_1$ to all of $S$ by $\sigma_1(z)=z$, $\sigma_1(X_2^-)=X_2^-$, and $\sigma_1(X_1^-)=X_1^-$. Define a $\sigma_1$-derivation $\delta_1$ of $S$ by $\delta_1(r)=\delta_1(z)=\delta_1(X_2^-)=0$ and $\delta_1(X_1^-)=\sigma_1(t_1)-t_1=1$. Set $T_1=S[X_1^+;\sigma_1,\delta_1]$.

Extend the automorphism $\sigma_2$ to all of $T_1$ by $\sigma_2(z)=z$, $\sigma_2(X_1^-) = X_1^-$, $\sigma_2(X_2^-)=X_2^-$, and $\sigma_2(X_1^+)=X_1^+$. Define a $\sigma_2$-derivation $\delta_2$ of $T_1$ by $\delta_2(r) = \delta_2(z) = \delta_2(X_1^-)=0$, $\delta_2(X_2^-) = \sigma_2(t_2)-t_2=-1$, and $\delta_2(X_1^+)=-z$. Set $T = T_1[X_2^+;\sigma_2,\delta_2]$.

As in Remark \ref{rmk.hberg}, the generator $z$ is superfluous. That is, using the relation $X_1^+X_2^+- X_2^+X_1^+=z$ to replace $z$ gives the necessary relations for $A$. First we check this for $X_1^+$ and $X_2^+$,
\begin{align*}
X_1^+z - z X_1^+
	&= X_1^+(X_1^+X_2^+-X_2^+X_1^+) - (X_1^+X_2^+- X_2^+X_1^+) X_1^+ \\
	&= (X_1^+)^2 X_2^+ - 2X_1^+ X_2^+ X_1^+ + X_2^+(X_1^+)^2, \\
X_2^+z -z X_2^+
	&= X_2^+(X_1^+X_2^+-X_2^+X_1^+) - (X_1^+X_2^+- X_2^+X_1^+) X_2^+ \\
	&= - \left( (X_2^+)^2 X_1^+- 2 X_2^+ X_1^+ X_2^+ + X_1^+(X_2^+)^2	\right).
\end{align*}
Now we check that this substitution does not in fact create any new relations. We have
\begin{align*}
zX_1^- - X_1^- z
	&= (X_1^+X_2^+- X_2^+X_1^+)X_1^- - X_1^- (X_1^+X_2^+- X_2^+X_1^+) \\
	&= (X_1^+X_1^-)X_2^- - X_2^+(X_1^+X_1^-) -  (X_1^-X_1^+)X_2^+ +  X_2^+(X_1^-X_1^+) \\
	&= (X_1^-X_1^+ + 1)X_2^- - X_2^+(X_1^-X_1^+ + 1) -  (X_1^-X_1^+)X_2^+ +  X_2^+(X_1^-X_1^+)
	= 0, \\
zX_2^- - X_2^- z
	&= (X_1^+X_2^+- X_2^+X_1^+)X_2^- - X_2^- (X_1^+X_2^+- X_2^+X_1^+) \\
	&= X_1^+(X_2^+X_2^-) - (X_2^+X_2^-)X_1^+ - X_1^+(X_2^-X_2^+) + (X_2^-X_2^+)X_1^+ \\
	&= X_1^+(X_2^-X_2^+-1) - (X_2^-X_2^+-1)X_1^+ - X_1^+(X_2^-X_2^+) + (X_2^-X_2^+)X_1^+ = 0.
\end{align*}
It follows that $A \iso T/(X_1^-X_1^+-t_1,X_2^-X_2^+-t_2)$.
\end{proof}

\begin{corollary}
Let $A$ be as in Theorem \ref{thm.ore2} with $\phi=\phi_1$ or $\phi_2$ as in Hypothesis \ref{hyp.A2auto}. Then $A^\phi$ is a noetherian TGWA.
\end{corollary}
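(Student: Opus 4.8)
The plan is to combine the noetherianity of $A$ from Theorem \ref{thm.ore2} with the classical behavior of the noetherian property under finite group actions, after first checking that Theorem \ref{thm.A2} applies so that $A^\phi$ is even a TGWA.

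First I would pin down the parameters. By the hypotheses of Theorem \ref{thm.ore2} we are in Example \ref{ex.A2} with $\mu=1$ and $p=1$ (so $\beta\neq 0$), and there it is computed that the minimal polynomials are $p_{12}(x)=(x-p)(x-1)$ and $p_{21}(x)=(x-p\inv)(x-1)$, which for $p=1$ both reduce to $(x-1)^2$. Thus in the notation of \eqref{eq.p12p21} we have $\lambda_1=\eta_1=-2$ and $\lambda_2=\eta_2=1$, and by the recurrence in Remark \ref{rem.recur} the sequence $S_a(2,1)$ satisfies $S_0=1$, $S_1=2$, $S_{a+1}=2S_a-S_{a-1}$, hence $S_a(2,1)=a+1$ (equivalently $S_a(2,1)=U_a(1)=a+1$ via \eqref{eq.chebyshev}). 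In particular $S_a(-\lambda_1,\lambda_2)=S_a(-\eta_1,\eta_2)=a+1\neq 0$ for all $a\geq 0$, so the non-vanishing hypothesis of both parts of Theorem \ref{thm.A2} holds, and for $\phi=\phi_1$ or $\phi=\phi_2$ the fixed ring $A^\phi$ is a regular, $\nu$-consistent TGWA of rank $2$.

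It then remains to see that $A^\phi$ is noetherian, and here there is essentially nothing new: Theorem \ref{thm.ore2} gives that $A$ is left and right noetherian, while $\grp{\phi}$ is a finite group by Hypothesis \ref{hyp.auto}(1)--(2); so, exactly as in the proof of Corollary \ref{cor.props}(1), \cite[Corollary 1.12]{Mo} shows that the fixed ring $A^\phi$ is again (left and right) noetherian. The main (and only) obstacle is therefore the bookkeeping of the first step: one must verify that $p=1$ forces $p_{12}=p_{21}=(x-1)^2$ and hence $S_a\neq 0$, since without this Theorem \ref{thm.A2} does not even guarantee that $A^\phi$ is a TGWA. One could alternatively try to realize $A^\phi=\mathcal{A}_\nu(R,\tau,s)$ --- with $\tau_1(h)=h+\beta$, $\tau_2(h)=h-m_2\beta$, $s_1=h$, $s_2=\prod_{j=1}^{m_2}(h+j\beta)$ in the case $\phi=\phi_1$ --- directly as a quotient of an iterated Ore extension mimicking Theorem \ref{thm.ore2}, but the relevant Serre-type relations now have degree depending on $m_2$, and this route is more delicate than the group-action argument.
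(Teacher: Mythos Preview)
Your proof is correct and follows essentially the same route as the paper's, which simply cites Theorem \ref{thm.A2}, Theorem \ref{thm.ore2}, and \cite[Corollary 1.12]{Mo}. Your additional contribution is the explicit verification that $S_a(-\lambda_1,\lambda_2)=S_a(-\eta_1,\eta_2)=a+1\neq 0$, which the paper handles more tersely via the example following Remark \ref{rem.split} (using the factorization $p_{12}=p_{21}=(x-1)^2$ over $\RR$ with $z_1=z_2=1\neq -1$); note that both arguments implicitly require $\chr\kk=0$.
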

\begin{proof}
This follows from Theorem \ref{thm.A2}, Theorem \ref{thm.ore2}, and \cite[Corollary 1.12]{Mo}.
\end{proof}

\section{Weight modules}\label{sec.weight}

The goal here is to describe how weight modules of a (T)GWA behave when restricted to the fixed ring. Some of these results generalize those from \cite{JW}, wherein the authors considered finite-dimensional simple modules for rank one GWAs and their invariants.
First we collect some definitions, mostly from \cite{DGO,HR2}.

\begin{definition}\label{def.weightmod}Let $A$ be an algebra over a ring $R$. A left $A$-module $M$ is called an $R$-\emph{weight module} if
\[ M = \bigoplus_{\bbm \in \MaxSpec(R)} M_\bbm,
\qquad M_\bbm = \{v \in M ~|~ \bbm v = 0\}.\]
Define the \emph{support} of $M$ on $R$ to be 
\[ \Supp_R(M) = \{ \bbm \in \MaxSpec(R) ~|~ M_\bbm \neq 0\}.\]
We denote by $(A,R)\wmod$ the category of left modules for $A$ that are $R$-weight modules.
\end{definition}

For the remainder of the section, the following hypotheses/notation are in effect.

\begin{hypothesis}\label{hyp.weight}
Let $R$ be a finitely-generated, commutative $\kk$-algebra and let $A=A_\mu(R,\sigma,t)$ be a regular, $\mu$-consistent TGWA of rank $n$ over $R$. Let $\Sigma=\grp{\sigma_1,\hdots,\sigma_n}\subset\Aut(R)$.
\end{hypothesis}

The abelian group $\Sigma$ acts on $\MaxSpec(R)$. Notice that if $M\in (A,R)\wmod$, then $X_i^{\pm}M_\bbm \subset M_{\sigma^{\pm 1}_i(\bbm)}$ for each $\bbm \in \Supp_R(M)$. 

\begin{definition}
For $\cO\in\MaxSpec(R)/\Sigma$ we denote by $(A,R)\wmodO$ the full subcategory of modules $M\in (A,R)\wmod$ such that $\Supp_R(M)\subset \cO$.
\end{definition}

\begin{lemma}\label{lem.orbits}
We have
\[(A,R)\wmod\simeq \coprod_{\cO\in\MaxSpec(R)/\Sigma}(A,R)\wmodO.\]
\end{lemma}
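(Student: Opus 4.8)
The plan is to exhibit the claimed equivalence of categories as the decomposition of $(A,R)\wmod$ into blocks indexed by $\Sigma$-orbits. First I would observe the elementary fact recorded just before the statement: for any $M \in (A,R)\wmod$ and any $\bbm \in \MaxSpec(R)$, the generators act by $X_i^{\pm} M_\bbm \subset M_{\sigma_i^{\pm 1}(\bbm)}$, and of course $R$ acts on each $M_\bbm$ preserving it. Hence, given $M$, for each orbit $\cO \in \MaxSpec(R)/\Sigma$ the subspace $M_\cO := \bigoplus_{\bbm \in \cO} M_\bbm$ is an $A$-submodule of $M$ (it is closed under the $R$-action and, since $\cO$ is $\Sigma$-stable, under each $X_i^{\pm}$). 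Since the weight space decomposition gives $M = \bigoplus_{\cO} M_\cO$ as $A$-modules, and clearly $M_\cO \in (A,R)\wmodO$, this produces a functor $(A,R)\wmod \to \coprod_\cO (A,R)\wmodO$, $M \mapsto (M_\cO)_\cO$.

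Next I would construct the inverse: given a family $(N_\cO)_{\cO}$ with $N_\cO \in (A,R)\wmodO$, assign to it $\bigoplus_\cO N_\cO$, which is again an $R$-weight module (its support is the disjoint union of the supports) and carries the obvious $A$-module structure; a morphism in the coproduct category is a family of morphisms, which assemble to a morphism of the direct sums. One then checks these two functors are mutually quasi-inverse: starting from $M$, reassembling $\bigoplus_\cO M_\cO$ recovers $M$ on the nose because the weight spaces partition according to which orbit they lie in; starting from $(N_\cO)_\cO$, taking the $\cO'$-component of $\bigoplus_\cO N_\cO$ returns $N_{\cO'}$ since $\Supp_R(N_\cO) \subset \cO$ forces $\left(\bigoplus_\cO N_\cO\right)_{\cO'} = N_{\cO'}$. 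Both round-trips are in fact the identity, so we even get an isomorphism of categories, and in particular the asserted equivalence.

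The only genuinely non-trivial point—and the one I would state carefully—is that distinct orbits are disjoint as subsets of $\MaxSpec(R)$, so that the internal direct sum $M = \bigoplus_\cO M_\cO$ is well-defined with no overlap; this is immediate since $\MaxSpec(R)/\Sigma$ is by definition a partition of $\MaxSpec(R)$ into $\Sigma$-orbits. Everything else is bookkeeping: verifying that $M_\cO$ is stable under all the algebra generators (which is exactly the displayed containment $X_i^\pm M_\bbm \subset M_{\sigma_i^{\pm 1}(\bbm)}$ together with $\bbm v = 0 \Rightarrow rv \in M_\bbm$ for $r \in R$), and that the functors are additive and compatible with morphisms. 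I do not anticipate any real obstacle; the coproduct $\coprod$ in the statement is the categorical coproduct of abelian (indeed module) categories, i.e. the category whose objects are tuples with finitely many—or, depending on the convention adopted, arbitrary—nonzero entries, and I would simply match that convention to whatever is needed so that $\bigoplus_\cO N_\cO$ makes sense as an object of $(A,R)\wmod$ (note that any weight module has support contained in a single orbit union, but a priori support can be infinite, so the convention should allow arbitrary families, which is harmless).
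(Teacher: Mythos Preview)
Your proposal is correct and is exactly the standard argument; the paper itself gives no proof at all beyond ``This is well-known, see for example \cite{MT}.'' Your write-up is thus strictly more detailed than the paper's treatment, and matches what the cited reference contains.
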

\begin{proof}
This is well-known, see for example \cite{MT}.
\end{proof}
Suppose that $\phi$ is an automorphism of $A$ satisfying Hypothesis \ref{hyp.auto}. Let $B\subset A$ be the subalgebra generated by $R^\phi$ and $(X_i^{\pm})^{m_i}$, then $B=A_{\nu}(R^\phi,\tau,s)$ as in Theorem \ref{thm.fix}. We have the subgroup of automorphisms $T=\grp{\tau_1,\ldots,\tau_n}=\grp{\sigma_1^{m_1},\hdots,\sigma_n^{m_n}}\subset \Sigma$. By  Remark \ref{rem.phisig}, $\sigma_i(R^\phi)\subset R^\phi$ for all $i \in [n]$. It follows that, by restricting the automorphisms to the fixed ring, we get maps
\[T\hookrightarrow \Sigma\to \Aut(R^\phi).\]
Clearly any $\Sigma$-orbit in $\MaxSpec(R)$ (or $\MaxSpec(R^\phi)$) partitions into $T$-orbits.

We have $R^\phi\hookrightarrow R$. Since $R$, and hence $R^\phi$, are finitely generated commutative $\kk$-algebras, then by the Nullstellensatz we have a surjective pullback map 
\[\pi:\MaxSpec(R)\onto \MaxSpec(R^\phi), \qquad \bbm\mapsto \bbm^\phi=\bbm\cap R^\phi.\]

For all $\bbm\in\MaxSpec(R)$, the inclusion $\bbm^\phi\subset \bbm$ induces a map
\[ R^\phi/\bbm^\phi \to R/\bbm\]
which is injective because $R^\phi/\bbm^\phi$ is a field.
Notice that, for all $g\in\Sigma$, since $\phi|_R$ commutes with $g$, we have $g(\bbm^\phi)=(g(\bbm))^\phi$. Hence, the isomorphisms $R/\bbm\simeq R/g(\bbm)$ and $R^\phi/\bbm^\phi\simeq R^\phi/g(\bbm^\phi)$, induced by $g$, fit into the following commutative diagram:
\begin{equation}\label{eq.comm-diag}
\begin{tikzcd}        
    R^\phi/\bbm^\phi \arrow[hook]{r} \arrow{d} & R/\bbm  \arrow{d} \\
    R^\phi/g(\bbm^\phi) \arrow[hook]{r} & R/g(\bbm)
\end{tikzcd}
\end{equation}

\begin{lemma}\label{lem.fiber-pi}
For all $\bbm\in\MaxSpec(R)$, we have $\pi^{-1}(\pi(\bbm))=\{\phi^k(\bbm)~|~0\leq k\leq\ell-1\}$.
\end{lemma}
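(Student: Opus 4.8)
The plan is to show the two inclusions $\{\phi^k(\bbm) : 0 \le k \le \ell - 1\} \subseteq \pi^{-1}(\pi(\bbm))$ and $\pi^{-1}(\pi(\bbm)) \subseteq \{\phi^k(\bbm) : 0 \le k \le \ell - 1\}$ separately. The first inclusion is immediate: since $\phi|_R$ fixes $R^\phi$ pointwise, for any $k$ we have $\phi^k(\bbm) \cap R^\phi = \phi^k(\bbm \cap R^\phi) = \bbm \cap R^\phi = \bbm^\phi$, so $\pi(\phi^k(\bbm)) = \pi(\bbm)$.

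For the reverse inclusion, suppose $\bbn \in \MaxSpec(R)$ satisfies $\bbn \cap R^\phi = \bbm \cap R^\phi =: \bbm^\phi$. The key observation is that $R$ is a finitely generated module over its subring of invariants $R^\phi$ (by Noether's theorem on finiteness of invariants, valid here since $\grp{\phi|_R}$ is a finite group acting on a finitely generated commutative $\kk$-algebra — or one can invoke that $R$ is integral over $R^\phi$ since each $r \in R$ satisfies the monic polynomial $\prod_{k=0}^{\ell-1}(x - \phi^k(r)) \in R^\phi[x]$). Thus $\bbm$ and $\bbn$ are both maximal ideals of $R$ lying over the maximal ideal $\bbm^\phi$ of $R^\phi$, and by the standard theory of integral extensions (lying over / the orbit structure of primes over a fixed prime under a finite group action — e.g. Bourbaki, Commutative Algebra V.2.2, or Atiyah–Macdonald Exercise 5.13), the group $\grp{\phi|_R}$ acts transitively on the fiber of $\Spec R \to \Spec R^\phi$ over $\bbm^\phi$. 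Hence $\bbn = \phi^k(\bbm)$ for some $0 \le k \le \ell - 1$, completing the inclusion.

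The main obstacle — really the only point requiring care — is justifying transitivity of the Galois-type action on the fiber; this is where one genuinely uses that $\phi|_R$ has finite order and that $R$ is commutative (exactly the hypotheses built into Hypothesis~\ref{hyp.auto}(1) and into Hypothesis~\ref{hyp.weight}). I would state it as: since $R$ is integral over $R^\phi$ and both $\bbm, \bbn$ lie over $\bbm^\phi$, if $\bbn \ne \phi^k(\bbm)$ for all $k$ then prime avoidance gives $x \in \bbn$ with $x \notin \phi^k(\bbm)$ for all $k$; but then $N_\phi(x) := \prod_{k=0}^{\ell-1}\phi^k(x) \in R^\phi \cap \bbn = \bbm^\phi \subseteq \bbm$, forcing some $\phi^k(x) \in \bbm$, i.e. $x \in \phi^{-k}(\bbm)$, a contradiction. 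This argument is short enough to include inline rather than merely citing. One should also note in passing that the displayed set has at most $\ell$ elements and the list may have repetitions (when the stabilizer of $\bbm$ in $\grp{\phi|_R}$ is nontrivial), which is harmless for the set equality being claimed.
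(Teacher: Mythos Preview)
Your proposal is correct and takes essentially the same approach as the paper. Both arguments use the norm element $\prod_{k=0}^{\ell-1}\phi^k(x)\in R^\phi$ together with prime avoidance and maximality; the paper organizes it as ``$\bbm'\subseteq\bigcup_k\phi^k(\bbm)$ hence $\bbm'\subseteq\phi^i(\bbm)$ for some $i$'', while you phrase the same reasoning contrapositively, and both reference Atiyah--Macdonald for the underlying fact.
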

\begin{proof}
Notice that $\pi(\phi(\bbm))=\phi(\bbm)\cap R^\phi=\bbm\cap R^\phi=\pi(\bbm)$, so $\pi^{-1}(\pi(\bbm))\supset\{\phi^k(\bbm)~|~0\leq k\leq\ell-1\}$. Since $\phi$ has finite order, the opposite inclusion also holds. This is an exercise in \cite[\S 5]{AM} but we will provide a proof for the benefit of the reader. Suppose $\bbm,\bbm'\in\MaxSpec(R)$ are such that $\bbm\cap R^\phi=\bbm'\cap R^\phi$. Then for all $x\in \bbm'$ we have that $\prod_{i=0}^{\ell-1}\phi^i(x)\in\bbm'\cap R^\phi=\bbm\cap R^\phi\subset \bbm$. It follows that there is a $j$ such that $\phi^j(x)\in\bbm$. Equivalently, $x\in\phi^{-j}(\bbm)$. Hence $\bbm'\subset \bigcup_{i=0}^{\ell-1}\phi^i(\bbm)$, which, by \cite[Prop. 1.11(i)]{AM} implies that $\bbm'\subset \phi^i(\bbm)$ for some $i$. By maximality, $\bbm'=\phi^i(\bbm)$ for some $i$.
\end{proof}

\begin{lemma}\label{lem.weightres}
Suppose that $M$ is an $R$-weight module. We consider the action of $R^\phi$ on $M$ by restriction.
\begin{enumerate}
    \item For all $\bbm\in \Supp_R(M)$, $\Res^R_{R^\phi}(M_\bbm)\subset M_{\bbm^\phi}$. In particular $M$ is also an $R^\phi$-weight module. 
    \item For each $\bbm^\phi\in\Supp_{R^\phi}(M)$, we have an equality of $R^\phi$-weight modules \[M_{\bbm^\phi}=\Res^R_{R^\phi}\left(\bigoplus_{k\in\ZZ}M_{\phi^k(\bbm)}\right).\]
\end{enumerate}
\end{lemma}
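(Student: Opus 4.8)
The plan is to unwind the definitions of weight spaces for $R$ and for $R^\phi$ and use the description of the fibers of $\pi$ from Lemma \ref{lem.fiber-pi}. For part (1), the key observation is that $\bbm^\phi = \bbm \cap R^\phi \subset \bbm$, so if $v \in M_\bbm$ then $\bbm^\phi v \subset \bbm v = 0$, which immediately gives $\Res^R_{R^\phi}(M_\bbm) \subset M_{\bbm^\phi}$. Since $M = \bigoplus_{\bbm \in \MaxSpec(R)} M_\bbm$ as $R$-modules, this shows every element of $M$ lies in a finite sum of $R^\phi$-weight spaces, and hence $M = \sum_{\bbn \in \MaxSpec(R^\phi)} M_\bbn$; to conclude $M$ is an $R^\phi$-weight module one checks the sum is direct, which follows because distinct maximal ideals of $R^\phi$ are comaximal (so the corresponding weight spaces intersect trivially).

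For part (2), fix $\bbm^\phi \in \Supp_{R^\phi}(M)$ and fix any $\bbm \in \MaxSpec(R)$ with $\bbm \cap R^\phi = \bbm^\phi$; by Lemma \ref{lem.fiber-pi}, $\pi^{-1}(\bbm^\phi) = \{\phi^k(\bbm) \mid 0 \le k \le \ell-1\}$, and since $\phi|_R$ commutes with each $\sigma_i$ we moreover have $\pi^{-1}(\bbm^\phi) = \{\phi^k(\bbm) \mid k \in \ZZ\}$ (the orbit stabilizes after $\ell$ steps). The inclusion "$\supseteq$" in the claimed equality is then immediate from part (1): each $M_{\phi^k(\bbm)}$ restricts into $M_{(\phi^k(\bbm))^\phi} = M_{\bbm^\phi}$. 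For the reverse inclusion, take $v \in M_{\bbm^\phi}$ and write $v = \sum_{\bbp \in \MaxSpec(R)} v_\bbp$ using the $R$-weight decomposition of $M$, where $v_\bbp \in M_\bbp$ and only finitely many $v_\bbp$ are nonzero. I must show $v_\bbp = 0$ whenever $\bbp \notin \pi^{-1}(\bbm^\phi)$, i.e. whenever $\bbp \cap R^\phi \ne \bbm^\phi$. If $\bbp^\phi := \bbp \cap R^\phi \ne \bbm^\phi$, pick $r \in \bbm^\phi \setminus \bbp^\phi$; then $r$ acts as $0$ on $v$ (since $r \in \bbm^\phi$ annihilates $v$), but $r$ acts invertibly on $M_\bbp$ (since $r \notin \bbp$, its image in the field $R/\bbp$ is a unit, and the $R$-action on $M_\bbp$ factors through $R/\bbp$). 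Applying a high enough power of $r$ and using that $r$ kills $v$ while the $M_\bbp$-component is preserved up to the unit scalar, we get $v_\bbp = 0$. Hence $v \in \bigoplus_{k \in \ZZ} M_{\phi^k(\bbm)}$, giving "$\subseteq$". Finally, the equality is one of $R^\phi$-weight modules because each side is visibly an $R^\phi$-submodule supported only at $\bbm^\phi$.

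The main subtlety is the argument that $v_\bbp = 0$ for $\bbp$ outside the fiber: one needs to separate the finitely many maximal ideals $\bbp$ appearing in the support of $v$ from $\bbm^\phi$ using a single element of $R^\phi$, or argue one $\bbp$ at a time. Working componentwise is cleanest: for each fixed $\bbp$ with $\bbp^\phi \ne \bbm^\phi$, the element $r \in \bbm^\phi \setminus \bbp^\phi$ satisfies $r^N v = 0$ for some $N$ (as $r \in \bbm^\phi$ and each weight space $M_\bbn$ is killed by $\bbn$, so in fact $r v = 0$ already), while $r$ acts as a nonzero scalar on the whole of $M_\bbp$; projecting the relation $rv = 0$ onto the $M_\bbp$-isotypic part (which is legitimate since the $R$-weight decomposition is $R$-stable) forces $v_\bbp = 0$. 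Everything else is a routine check of the definitions.
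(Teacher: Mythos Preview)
Your proof is correct and follows the same approach as the paper, only with considerably more detail: the paper's argument for (1) is identical to yours, and for (2) it simply says ``follows directly from (1) and Lemma~\ref{lem.fiber-pi}'', leaving the reader to unpack exactly the grouping-by-fibers argument you spell out. One small misattribution: when you pass from $\{\phi^k(\bbm)\mid 0\le k\le \ell-1\}$ to $\{\phi^k(\bbm)\mid k\in\ZZ\}$, the relevant fact is that $\phi|_R$ has finite order $\ell$, not that it commutes with the $\sigma_i$; this does not affect the validity of the argument.
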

\begin{proof}
For (1), it is enough to notice that if $v\in M_\bbm$, then $rv=0$ for all $r\in\bbm^\phi$. Then (2) follows directly from (1) and Lemma \ref{lem.fiber-pi}.
\end{proof}

In light of Lemma \ref{lem.weightres}(1), we can now consider the restriction functor for TGWAs
\[\Res^A_B:(A,R)\wmod\to (B,R^\phi)\wmod.\]

Let $\cO=\Sigma\cdot \bbm\in\MaxSpec(R)/\Sigma$ be an orbit and let $\overline{\cO}=\pi(\cO)=\Sigma\cdot \bbm^\phi\in\MaxSpec(R^\phi)/\Sigma$. We then have $\overline{\cO}=\bigsqcup_{i}\overline{\cO}_i$ with $\overline{\cO}_i\in \MaxSpec(R^\phi)/T$. Then, for all $M\in (A,R)\wmodO$, we have, by Lemma \ref{lem.orbits}, $\Res^A_B(M)=\oplus_{i}M^{i}$, 
with $M^{i}\in (B,R^\phi)\wmod_{\overline{\cO}_i}$.

\subsection{Rank 1}
Let $n=1$. We consider a rank $1$ TGWA $A=A(R,\sigma,t)$, which is in fact just a GWA. Notice that in this case we have $A^\phi=B=A(R^\phi,\tau,s)$ for any isomorphism $\phi$ as in Hypothesis \ref{hyp.auto}. We drop all the indices and let $\phi(X^{\pm})=\alpha^{\pm 1}X^{\pm}$ with $m=\ord(\alpha)$. Then 
\[A^\phi=\bigoplus_{k\in\ZZ}R^\phi (Z_k)^m\subset \bigoplus_{k\in\ZZ}R Z_k=A,
\quad\text{ with }\quad Z_k=\begin{cases}(X^+)^k & \text{ if }k\geq 0 \\ (X^-)^{-k} & \text{ if }k<0.\end{cases}\]
We now recall some results about the classification of simple weight modules from \cite{DGO}.

\begin{definition}
We say that $\bbm \in \MaxSpec(R)$ is a \emph{break} of $A$ if $t\in\bbm$. We denote by $\beta_A\subset\MaxSpec(R)$ the set of all breaks.
\end{definition}

We define an order of $\cO \in \MaxSpec(R)/\Sigma$ by $\bbm<\sigma(\bbm)$. Let $\beta_{A,\cO}=\beta_A\cap\cO$ be the set of breaks in the orbit $\cO$ and suppose $\beta_{A,\cO} \neq \emptyset$. Then we set $\beta_{A,\cO}'=\beta_{A,\cO} \cup \{\infty\}$ if $\beta_{A,\cO}$ contains a maximal element and $\beta_{A,\cO}'=\beta_{A,\cO}$ otherwise. For $\bbn\in\beta_{A,\cO}'$, we let $\bbn^-$ be the maximal element of $\beta_{A,\cO}'$ such that $\bbn^-<\bbn$, or $\bbn^-=-\infty$ if $\bbn$ was a minimal element of $\beta_{A,\cO}'$. We extend the order in $\cO$ to $\cO\cup\{\pm\infty\}$ in the obvious way.

\begin{theorem}[{\cite[\S 5.2 \S5.4]{DGO}}]\label{thm.dgo}
Let $\cO\in\MaxSpec(R)/\Sigma$ be an infinite orbit. Let $\beta_{A,\cO}\subset\cO$ be the set of breaks in the orbit $\cO$.
\begin{enumerate}
\item Suppose $\beta_{A,\cO}=\emptyset$. Up to isomorphism, there is a unique simple module $M^{\cO}\in(A,R)\wmod_\cO$. Moreover, $\Supp_R(M)=\cO$, $M^{\cO}\iso \oplus_{\bbm\in\cO}(R/\bbm)v_\bbm$ as $R$-modules, and the $A$-action is given by $X^{\pm} v_\bbm=v_{\sigma^{\pm 1}(\bbm)}$.

\item Suppose $\beta_{A,\cO}\neq \emptyset$ and let $\beta_{A,\cO}'$ be as above. Up to isomorphism, the simple weight modules in $(A,R)\wmod_\cO$ are $\{M_{[\bbn]}^{\cO}~|~\bbn\in\beta_{A,\cO}'\}$, where
\[\Supp_R(M_{[\bbn]})=\{\bbm\in\cO~|~\bbn^-<\bbm\leq \bbn\}\] (the second inequality is actually strict if $\bbn=\infty$), $M_{[\bbn]}\iso \oplus_{\bbm\in\Supp_R(M_{[\bbn]})}(R/\bbm) v_\bbm$ as $R$-modules, and the $A$-action is given by 
\[X^{\pm} v_\bbm=\begin{cases}v_{\sigma^{\pm 1}(\bbm)} & \text{ if }\sigma^{\pm 1}(\bbm)\in\Supp_R(M_{[\bbn]}) \\ 0 & \text{ otherwise.}\end{cases}\]
\end{enumerate}
\end{theorem}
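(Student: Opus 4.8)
This is the classification theorem of Drozd--Guzner--Ovsienko, and in the write-up one can simply cite \cite{DGO}; here I sketch the structure of a self-contained argument. Since $\cO$ is infinite, $\grp{\sigma}$ acts freely on it, so after fixing a base point I identify $\cO$ with $\ZZ$ via $\bbm_j=\sigma^j(\bbm_0)$. For $M\in (A,R)\wmod_\cO$ we have $M=\bigoplus_{j\in\ZZ}M_{\bbm_j}$ with $X^\pm M_{\bbm_j}\subseteq M_{\bbm_{j\pm1}}$ and each $M_{\bbm_j}$ a vector space over the residue field $R/\bbm_j$. Everything runs on one computation: $X^-X^+=t$ acts on $M_{\bbm_j}$ by the scalar $\bar t_j:=(t\bmod\bbm_j)$, while $X^+X^-=\sigma(t)$ acts on $M_{\bbm_j}$ by $(\sigma(t)\bmod\bbm_j)$, which equals $\bar t_{j-1}$ under the isomorphism $R/\bbm_{j-1}\iso R/\bbm_j$ induced by $\sigma$. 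Consequently, if $\bbm_j$ is not a break then $X^+\colon M_{\bbm_j}\to M_{\bbm_{j+1}}$ is an isomorphism with inverse $\bar t_j^{-1}X^-$; call such an edge \emph{saturated}.

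The first step is to pin down the support of a simple $M\in (A,R)\wmod_\cO$. Any submodule is $\ZZ$-graded, so a nonzero one meets some $M_{\bbm_j}$; then $M=Av$ for $0\neq v\in M_{\bbm_j}$, and since $A=\bigoplus_k RZ_k$ we get $M_{\bbm_{j+k}}=RZ_kv$, a cyclic $R$-module killed by $\bbm_{j+k}$, so every weight space of $M$ has dimension $\leq 1$ over its residue field. Combined with saturation: a nonzero weight space adjacent to a zero one forces a break at the smaller index; and for any break $\bbm_i$ in $\Supp_R(M)$ one has $X^+|_{M_{\bbm_i}}=0$, because the $M_{\bbm_i}$-component of the submodule $A(X^+v)$ is $RX^-X^+v=R\bar t_iv=0$, so $A(X^+v)$ is proper, hence zero. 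Feeding this back, $\bigoplus_{j\leq i}M_{\bbm_j}$ is a submodule whenever $\bbm_i$ is a break, so by simplicity $\Supp_R(M)$ is an \emph{interval} $\{j:a<j\leq b\}$ with $a\in\ZZ\cup\{-\infty\}$, $b\in\ZZ\cup\{+\infty\}$, containing no break in its interior, with $\bbm_b$ a break when $b$ is finite and $\bbm_a$ a break when $a$ is finite. Translating back into orbit language, this is exactly $\{\bbm:\bbn^-<\bbm\leq\bbn\}$ with $\bbn=\bbm_b$ a break, or $\{\bbm:\bbn^-<\bbm\}$ with $\bbn=\infty$ when $b=+\infty$; and when $\beta_{A,\cO}=\emptyset$ the only admissible interval is $\cO$ itself, which is case (1).

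Next I would exhibit the modules and check simplicity. For an admissible support $\cS$, put $M:=\bigoplus_{\bbm\in\cS}(R/\bbm)v_\bbm$ with $R$ acting on $v_\bbm$ through $R/\bbm$, $X^+v_\bbm=v_{\sigma(\bbm)}$ if $\sigma(\bbm)\in\cS$ and $0$ otherwise, and $X^-v_{\sigma(\bbm)}=\bar t_\bbm v_\bbm$; after absorbing scalars into the $v_\bbm$ this is the action displayed in the statement. The relations \eqref{eq.tgwc1} and $X^-X^+=t$, $X^+X^-=\sigma(t)$ are verified directly, the only delicate points being the two ends, where one uses that $\bbm_b$ (resp.\ $\bbm_a$) is a break to make $X^+v_{\bbm_b}=0$ (resp.\ $X^-v_{\bbm_{a+1}}=0$) compatible with $tv_{\bbm_b}=0$ (resp.\ $\sigma(t)v_{\bbm_{a+1}}=0$). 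Simplicity is then immediate: a nonzero submodule contains some $v_\bbm$, and since the interior scalars $\bar t_j$ are nonzero (no interior breaks), $X^\pm$ recover all of $M$ from it. For uniqueness, two simple modules with the same support each equal $A$ applied to a generator of their bottom weight space; sending one generator to the other is well defined (reducing to the equality of the annihilators of the $Z_kv$, which depend only on the common support) and surjective, hence an isomorphism, giving the ``up to isomorphism'' clauses of (1) and (2).

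The real obstacle is organizational, not conceptual: one must consistently carry the $\sigma$-induced identifications $R/\bbm\iso R/\sigma(\bbm)$ when claiming that $X^\pm X^\mp$ acts by the expected residue-field scalar and when normalizing the basis $\{v_\bbm\}$, and one must make sure the two submodule arguments invoke the break condition at exactly the right index. With that bookkeeping in place, the classification of supports and the rigidity of the action both follow, yielding (1) and (2).
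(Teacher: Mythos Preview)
The paper does not prove this theorem at all: it is stated as a quotation from \cite[\S5.2, \S5.4]{DGO} and is used as background input for Theorem~\ref{thm.rank1}. So there is no ``paper's own proof'' to compare against; your sketch is supplementary material that the authors chose to outsource entirely to the reference.

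That said, your outline is the standard DGO argument and is essentially correct. Two small points worth tightening if you keep it. First, the normalization you flag is genuine: with $X^{+}v_{\bbm}=v_{\sigma(\bbm)}$ one is forced into $X^{-}v_{\sigma(\bbm)}=\bar t_{\bbm}\,v_{\bbm}$, so the formula $X^{-}v_{\bbm}=v_{\sigma^{-1}(\bbm)}$ in the displayed statement is only literally true after a further (orbit-dependent) rescaling of the $v_{\bbm}$, which is possible precisely because no interior $\bar t_j$ vanishes; you say this, but it would be cleaner to state the honest action and then remark that it is isomorphic to the displayed one. Second, your uniqueness paragraph (``sending one generator to the other is well defined'') hides the only nontrivial check: when the interval is unbounded below there is no bottom weight space, so one should instead fix any weight $\bbm$ in the common support, compare the cyclic presentations $A/\ann_A(v_{\bbm})$, and observe that $\ann_A(v_{\bbm})$ is determined by the support data alone (it is generated by $\bbm$ together with the vanishing of $Z_k$ at the endpoints). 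With those two clarifications the sketch would stand on its own.
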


\begin{example}
Assume $\mathrm{char}\kk=0$. Let $R=\kk[h]$, $\sigma(h)=h-1$, then any orbit $\cO\in\MaxSpec(R)$ is infinite and the breaks correspond to the irreducible factors of $t$.
\end{example}

Notice that $t=\phi(t)$, hence for $\bbm\in\MaxSpec(R)$ we have $t\in\bbm$ if and only if $t\in\bbm^\phi$. Since $A^\phi=A(R^\phi,\tau,s)$ with $s=\prod_{j=0}^m \sigma^{-j}(t)$, the set of breaks of $A^\phi$ in $\MaxSpec(R^\phi)$ is 
\[\beta_{A^\phi}=\pi\left(\bigcup_{j=0}^{m-1}\sigma^{-j}(\beta_A)\right)=\{\sigma^{-j}(\bbm^\phi)=(\sigma^{-j}(\bbm))^\phi ~|~ 0\leq j\leq m-1,~\bbm\in\beta_A\}.\]

\begin{proposition}\label{prop.infiniteorb}
Let $A=A(R,\sigma,t)$ be a rank $1$ TGWA, $\phi\in Aut(A)$ as in Hypothesis 3.1, and $\cO\in\MaxSpec(R)/\Sigma$ an infinite orbit. Then, for all $\bbm\in \cO$, $\left(\grp{\phi}\cdot\bbm\right)\cap \cO)=\{\bbm\}$. Further, for $\bbm, \bbm'\in \cO$ with $\bbm\neq \bbm'$, we have $\bbm^\phi\neq (\bbm')^\phi$.
\end{proposition}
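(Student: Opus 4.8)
The plan is to deduce both assertions from the single claim: \emph{if $\cO$ is an infinite $\Sigma$-orbit, $\bbm\in\cO$, and $\phi^j(\bbm)\in\cO$ for some $j$, then $\phi^j(\bbm)=\bbm$.} Granting the claim, the second assertion is immediate: if $\bbm^\phi=(\bbm')^\phi$ with $\bbm,\bbm'\in\cO$, then $\pi(\bbm)=\pi(\bbm')$, so by Lemma \ref{lem.fiber-pi} we have $\bbm'=\phi^k(\bbm)$ for some $k$; thus $\bbm'\in(\grp{\phi}\cdot\bbm)\cap\cO$, and the claim forces $\bbm'=\bbm$, a contradiction. The first assertion is the claim restated.

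To prove the claim I would first record two facts. First, in rank $1$ we have $\Sigma=\grp{\sigma}$, so $\cO=\{\sigma^i(\bbm)\mid i\in\ZZ\}$; since $\cO$ is infinite, the stabilizer of $\bbm$ in the cyclic group $\grp{\sigma}$ is trivial, i.e.\ $\sigma^i(\bbm)=\bbm$ implies $i=0$. Second, by Remark \ref{rem.phisig} the automorphism $\phi|_R$ commutes with $\sigma$, and by Hypothesis \ref{hyp.auto}(1) it has finite order $\ell$, so $\phi^\ell$ restricts to $\id_R$ and hence fixes every ideal of $R$.

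Now suppose $\phi^j(\bbm)\in\cO$, say $\phi^j(\bbm)=\sigma^k(\bbm)$ for some $k\in\ZZ$. Applying the (mutually commuting) automorphism $\phi^j$ repeatedly to this identity gives $\phi^{mj}(\bbm)=\sigma^{mk}(\bbm)$ for all $m\geq 0$. Taking $m=\ell$ and using $\phi^{\ell j}|_R=\id_R$ yields $\bbm=\phi^{\ell j}(\bbm)=\sigma^{\ell k}(\bbm)$, so $\ell k=0$ by the first fact, i.e.\ $k=0$ since $\ell\geq 1$. Hence $\phi^j(\bbm)=\sigma^{0}(\bbm)=\bbm$, proving the claim.

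The argument is essentially formal once the commutation $\phi\sigma=\sigma\phi$ on $R$ is available, and I do not anticipate a genuine obstacle; the only points requiring care are that the hypotheses enter precisely through the finiteness of $\ord(\phi|_R)$ (to cancel $\phi$) and the infiniteness of $\cO$ (to pass from $\ell k=0$ to $k=0$), and that the last step of the claim uses $\sigma^i(\bbm)=\bbm\Rightarrow i=0$, which holds exactly because the orbit is genuinely infinite.
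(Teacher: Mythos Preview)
Your proposal is correct and follows essentially the same argument as the paper: assume $\phi^j(\bbm)=\sigma^k(\bbm)$, iterate using the commutation of $\phi$ and $\sigma$ on $R$ to obtain $\bbm=\sigma^{\ell k}(\bbm)$, and conclude $k=0$ from the infiniteness of $\cO$; then deduce the second statement via Lemma~\ref{lem.fiber-pi}. If anything, you are slightly more explicit than the paper in justifying the inductive step $\phi^{mj}(\bbm)=\sigma^{mk}(\bbm)$ by invoking Remark~\ref{rem.phisig}.
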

\begin{proof}
Let $\bbm\in\MaxSpec(R)$, with $\cO=\Sigma\cdot \bbm$ and suppose that for some $k\in \ZZ$, $\phi^k(\bbm)\in \cO$. Hence $\phi^k(\bbm)=\sigma^j(\bbm)$ for some $j\in\ZZ$. Recall that $\phi^\ell=\id_R$. Then we have
\[ \bbm = (\phi^{\ell})^k(\bbm) = (\phi^k)^{\ell}(\bbm) = (\sigma^j)^{\ell}(\bbm) = \sigma^{j\ell}(\bbm).\]
Since $\cO$ is infinite, we have $j\ell=0$, so $j=0$. This implies that $\phi^k(\bbm)=\sigma^j(\bbm)=\bbm$. Finally, if $\bbm,\bbm'\in \cO$ are such that $\bbm^\phi=(\bbm')^\phi$, by Lemma \ref{lem.fiber-pi} $\bbm'\in \grp{\phi}\cdot \bbm$, so $\bbm=\bbm'$.
\end{proof}

\begin{theorem}\label{thm.rank1}
Let $A=A(R,\sigma,t)$ be a rank $1$ TGWA, $\phi\in Aut(A)$ as in Hypothesis 3.1, and $\cO=\Sigma\cdot\bbm \in\MaxSpec(R)/\Sigma$ an infinite orbit. Let $d_\phi=[R/\bbm:R^\phi/\bbm^\phi]$ be the degree of the field extension.
\begin{enumerate}
\item \label{rank1-1} Suppose $\cO$ does not contain any breaks and let $M\in(A,R)\wmod_\cO$ be the unique simple weight module. Then 
\[\Res^A_{A^\phi}(M)=\bigoplus_{i=0}^{m-1} \left(M^{\overline{\cO}_i}\right)^{\oplus d_\phi}\]
where $\overline{\cO}_i=T\cdot\sigma^i(\bbm)=\{\sigma^{km+i}(\bbm)\}_{k\in\ZZ}$ is an infinite orbit without breaks in $\MaxSpec(R^\phi)/T$ and $M^{\overline{\cO}_i}\in (A^\phi,R^\phi)\wmod_{\overline{\cO}_i}$ is the unique simple weight module supported on that orbit.

\item \label{rank1-2} Suppose $\cO$ contains breaks. As in Theorem \ref{thm.dgo}(2), let $M_{[\bbn]}\in(A,R)\wmod_\cO$ be the unique simple module with $\Supp_R(M_{[\bbn]})=\{\bbm\in\cO~|~\bbn^-<\bbm\leq \bbn\}$. Then $\overline{\cO}_i=T\cdot\sigma^i(\bbm)$ is a $T$-orbit as above and we let $M_{[\bbn]}^{\overline{\cO}_i}\in(A^\phi,R^\phi)\wmod_{\overline{\cO}_i}$ be the unique simple weight module for $A^\phi$ with $\Supp_{R^\phi}(M_{[\bbn]}^{\overline{\cO}_i})=\pi(\Supp_R(M_{[\bbn]}))\cap\overline{\cO}_i$ (which, when it is not empty, is an interval between breaks for $s$). We then have
\[\Res^A_{A^\phi}(M_{[\bbn]})=\bigoplus_i\left(M_{[\bbn]}^{\overline{\cO}_i}\right)^{\oplus d_\phi}\]
where the direct sum is over those $i$ such that $\pi(\Supp_R(M_{[\bbn]}))\cap\overline{\cO}_i\neq\emptyset$.
\end{enumerate}
\end{theorem}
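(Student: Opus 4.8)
The plan is to combine the structural results on restriction of weight modules (Lemmas~\ref{lem.weightres} and~\ref{lem.orbits}) with the rigidity of infinite orbits (Proposition~\ref{prop.infiniteorb}), and then to upgrade the field extension degree $d_\phi$ into a direct-sum multiplicity by transporting a residue-field basis around the orbit. Recall that in rank one $A^\phi=B=A(R^\phi,\tau,s)$ with $\tau=\sigma^m$, and set $\Sigma=\grp{\sigma}$, $T=\grp{\tau}$. The first observation is that, since $\cO$ is infinite, Proposition~\ref{prop.infiniteorb} gives that $\pi$ is injective on $\cO$ and that $\phi^k(\bbm)\in\cO$ forces $\phi^k(\bbm)=\bbm$; hence Lemma~\ref{lem.weightres}(2) collapses to $M_{\bbm^\phi}=\Res^R_{R^\phi}(M_\bbm)$ for every $\bbm\in\Supp_R(M)$, and $\bbm\mapsto\bbm^\phi$ is a bijection of $\Supp_R(M)$ onto $\Supp_{R^\phi}(\Res^A_{A^\phi}M)$. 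Since $M_\bbm\iso R/\bbm$ as an $R$-module (Theorem~\ref{thm.dgo}), the weight space $M_{\bbm^\phi}$, viewed as an $R^\phi/\bbm^\phi$-vector space through $R^\phi/\bbm^\phi\hookrightarrow R/\bbm$, has dimension $d_\phi$, and the commutative square~\eqref{eq.comm-diag} shows this degree equals $d_\phi$ at every point of $\cO$. Finally, by Lemma~\ref{lem.orbits} applied to $B$, $\Res^A_{A^\phi}(M)$ decomposes as a $B$-module along the $T$-orbits of $\MaxSpec(R^\phi)$; those meeting $\Supp_{R^\phi}(\Res^A_{A^\phi}M)$ all have the form $\overline{\cO}_i=T\cdot(\sigma^i\bbm)^\phi=\{(\sigma^{km+i}\bbm)^\phi\mid k\in\ZZ\}$, which are infinite and, for $0\le i<m$, pairwise distinct (since $\sigma$, hence $\tau$, acts freely on the infinite orbit $\cO$, and $\pi$ is injective on $\cO$). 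It therefore suffices to identify, for each relevant $i$, the summand $N_i:=\bigoplus_{k:\ \sigma^{km+i}\bbm\in\Supp_R M}M_{\sigma^{km+i}\bbm}$ as a $B$-module.

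For part~\eqref{rank1-1}, fix $i$; after replacing $\bbm$ by $\sigma^i\bbm$ we may assume $i=0$. Choose a basis $e_1,\dots,e_{d_\phi}$ of $R/\bbm$ over $R^\phi/\bbm^\phi$ and propagate it by letting $e_j^{(k)}\in R/\sigma^{mk}\bbm$ be the image of $e_j$ under the isomorphism induced by $\sigma^{mk}$; by~\eqref{eq.comm-diag} this isomorphism is semilinear over the field isomorphism $R^\phi/\bbm^\phi\to R^\phi/(\sigma^{mk}\bbm)^\phi$ induced by $\sigma^{mk}$, so $\{e_j^{(k)}\}_{j}$ is again a basis of $R/\sigma^{mk}\bbm$ for each $k$. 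Put $L_j:=\bigoplus_{k\in\ZZ}\bigl(R^\phi/(\sigma^{mk}\bbm)^\phi\bigr)\cdot e_j^{(k)}v_{\sigma^{mk}\bbm}$. Using $X^{\pm}v_{\bbm'}=v_{\sigma^{\pm1}(\bbm')}$ and the definition of $e_j^{(k)}$, one checks that $(X^{\pm})^m$ carries $e_j^{(k)}v_{\sigma^{mk}\bbm}$ to $e_j^{(k\pm1)}v_{\sigma^{m(k\pm1)}\bbm}$ and that $R^\phi$ preserves each line; hence $L_j$ is a $B$-submodule. Since $\cO$ has no $A$-breaks, $\overline{\cO}_0$ has no $B$-breaks (a $B$-break at $(\sigma^{mk}\bbm)^\phi$ would force $t\in\sigma^l(\bbm)$ for some $l$), so by Theorem~\ref{thm.dgo}(1) the assignment $e_j^{(k)}v_{\sigma^{mk}\bbm}\mapsto v_{(\sigma^{mk}\bbm)^\phi}$ is an isomorphism $L_j\iso M^{\overline{\cO}_0}$. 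Because the $e_j^{(k)}$ span each weight space over the residue field, $N_0=\bigoplus_{j=1}^{d_\phi}L_j\iso(M^{\overline{\cO}_0})^{\oplus d_\phi}$, and summing over $i$ proves~\eqref{rank1-1}.

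Part~\eqref{rank1-2} follows the same template, with two extra checks. First, because $\Supp_R(M_{[\bbn]})=\{\bbm'\in\cO\mid\bbn^-<\bbm'\le\bbn\}$ is an \emph{interval} in the ordered orbit, applying $X^+$ exactly $m$ times to $v_{\sigma^{mk+i}\bbm}$ either reaches $v_{\sigma^{m(k+1)+i}\bbm}$ (when the latter lies in the support, in which case so do all the weight spaces in between) or is annihilated at the first absent weight space, and likewise for $X^-$; thus $(X^{\pm})^m$ acts on $N_i$ by exactly the ``shift or kill'' rule of Theorem~\ref{thm.dgo}(2), and the $L_j$-construction goes through over the index set $\{k\mid\sigma^{mk+i}\bbm\in\Supp_R(M_{[\bbn]})\}$. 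Second, one must justify the parenthetical claim in the statement, namely that $\pi(\Supp_R(M_{[\bbn]}))\cap\overline{\cO}_i$, when nonempty, equals $\{\mathfrak n\in\overline{\cO}_i\mid\widetilde{\bbn}^-<\mathfrak n\le\widetilde{\bbn}\}$ for consecutive $B$-breaks $\widetilde{\bbn}^-<\widetilde{\bbn}$ (possibly $\pm\infty$), so that $M_{[\bbn]}^{\overline{\cO}_i}$ is defined. Here one uses that $\mathfrak n^\phi\in\overline{\cO}_i$ is a $B$-break precisely when one of the $m$ consecutive points $\mathfrak n,\sigma\mathfrak n,\dots,\sigma^{m-1}\mathfrak n$ of $\cO$ is an $A$-break; an elementary order argument then shows that the $A$-breaks $\bbn$ and $\bbn^-$ bounding $\Supp_R(M_{[\bbn]})$ ``round down'' within $\overline{\cO}_i$ (to the largest element not exceeding them) to $B$-breaks $\widetilde{\bbn}$ and $\widetilde{\bbn}^-$, and that no $B$-break lies strictly in between. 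Granting this, $e_j^{(k)}v_{\sigma^{mk+i}\bbm}\mapsto v_{(\sigma^{mk+i}\bbm)^\phi}$ identifies $L_j$ with $M_{[\bbn]}^{\overline{\cO}_i}$ via Theorem~\ref{thm.dgo}(2), so $N_i\iso(M_{[\bbn]}^{\overline{\cO}_i})^{\oplus d_\phi}$, and summing over the $i$ with $\pi(\Supp_R(M_{[\bbn]}))\cap\overline{\cO}_i\neq\emptyset$ gives~\eqref{rank1-2}.

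The module-homomorphism verifications are routine, being forced by the explicit actions in Theorem~\ref{thm.dgo}. I expect the genuine obstacle to be the endpoint bookkeeping in part~\eqref{rank1-2}, that is, matching the break data of $A$ on $\cO$ with that of $B$ on each $\overline{\cO}_i$; this is elementary but requires some care with the orbit orders.
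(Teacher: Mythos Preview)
Your proposal is correct and follows essentially the same approach as the paper: invoke Proposition~\ref{prop.infiniteorb} to make $\pi$ injective on $\cO$, split $\overline{\cO}$ into $T$-orbits, transport a residue-field basis along the orbit via~\eqref{eq.comm-diag}, and identify each resulting rank-one piece with the DGO simple. The only cosmetic difference is that you build the submodules $L_j$ explicitly and read off directness from linear independence of the transported basis, whereas the paper generates them cyclically and argues directness by contradiction; the endpoint bookkeeping in part~\eqref{rank1-2} is likewise handled the same way (your ``rounding down'' is the paper's choice of $j_1,j_2$ with $\sigma^{-j_1}((\bbn^-)^\phi),\sigma^{-j_2}(\bbn^\phi)\in\overline{\cO}_i$).
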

\begin{proof}
Since $\cO=\{\sigma^k(\bbm)~|~k\in\mathbb{Z}\}$ is an infinite $\Sigma$-orbit, it follows from Proposition \ref{prop.infiniteorb} that \[\overline{\cO}=\pi(\cO)=\{(\sigma^k(\bbm))^\phi=\sigma^k(\bbm^\phi)~|~k\in\ZZ\}\] 
is also an infinite orbit. Since $T=\grp{\sigma^m}$, it is clear that the decomposition of $\overline{\cO}$ into $T$-orbits is \[\overline{\cO}=\bigsqcup_{i=0}^{m-1}\overline{\cO}_i=\bigsqcup_{i=0}^{m-1}\{\sigma^{km+i}(\bbm)~|~k\in\mathbb{Z}\}\] 
with each $\overline{\cO}_i$ being an infinite orbit as well. We can choose elements $\{e_j\}_{j\in J}$ such that $e_j\in R$ and their images in the quotient $\bar{e}_j\in R/\bbm$ are a basis of $R/\bbm$ over $R^\phi/\bbm^\phi$. Then, by \eqref{eq.comm-diag} we have that, for all $k\in \mathbb{Z}$, $\{\sigma^k(\bar{e}_j)\}_{j\in J}=\{\overline{\sigma^k(e_j)}\}_{j\in J}$ is a basis of $R/\sigma^k(\bbm)$ over $R^\phi/\sigma^k(\bbm^\phi)$.

\eqref{rank1-1} 
If $\beta_{A,\cO}=\emptyset$, then $\beta_{A^\phi}\cap\overline{\cO}=\emptyset$. Hence, $\beta_{A^\phi}\cap\overline{\cO}_i=\emptyset$ for all $i=0,\ldots,m-1$ so the $\overline{\cO}_i$ are infinite orbits without breaks. For each $i=0,\ldots, m$ and for each $j\in J$, $A^\phi\cdot \sigma^i(\bar{e}_j)v_{\sigma^i(\bbm)}$ is a $(A^\phi,R^\phi)$-weight module supported on $\overline{\cO}_i$ and its component of weight $\sigma^i(\bbm)$ is $A^\phi_0\cdot \sigma^i(\bar{e}_j)v_{\sigma^i(\bbm)}\simeq (R^\phi/\sigma^i(\bbm^\phi))\sigma^i(\bar{e}_j)v_{\sigma^i(\bbm)}$, which is one dimensional over $R^\phi/\sigma^i(\bbm^\phi)$. It follows that the composition series of $A^\phi\cdot \sigma^i(\bar{e}_j)v_{\sigma^i(\bbm)}$ consists of a single copy of the unique simple module $M^{\overline{\cO}_i}$, hence $A^\phi\cdot \sigma^i(\bar{e}_j)v_{\sigma^i(\bbm)}\simeq M^{\overline{\cO}_i}$ for all $j\in J$. 

It suffices to prove that the sum of the $M^{\overline{\cO}_i}$ is direct. Let $j,j'\in J$, $j\neq j'$, and set $N:=A^\phi\cdot \sigma^i(\bar{e}_j)v_{\sigma^i(\bbm)}\cap A^\phi\cdot \sigma^i(\bar{e}_{j'})v_{\sigma^i(\bbm)}\neq 0$. We claim $N=0$. Suppose otherwise. Then $N$ is a weight module supported on $\overline{\cO}_i$, so there exists $0 \neq x \in (N)_{\sigma^{km+i}(\bbm)}$ for some $k\in\ZZ$. This implies that there are $r,r'\in R^\phi$ such that
\[
rZ_{km}\sigma^i(\bar{e}_j)v_{\sigma^i(\bbm)}=x=r'Z_{km}\sigma^i(\bar{e}_{j'})v_{\sigma^i(\bbm)}
\]
with $Z_{km}=(X^+)^{km}$ if $k\geq 0$ and $Z_{km}=(X^-)^{-km}$ if $k<0$. Then
\[
r\sigma^{km+i}(\bar{e}_j)v_{\sigma^{km+i}(\bbm)}
    = rZ_{km}\sigma^i(\bar{e}_j)v_{\sigma^i(\bbm)}
    = r'Z_{km}\sigma^i(\bar{e}_{j'})v_{\sigma^i(\bbm)}
    = r'\sigma^{km+i}(\bar{e}_{j'})v_{\sigma^{km+i}(\bbm)},
\]
but this is impossible because $\sigma^{km+i}(\bar{e}_j)$ and $\sigma^{km+i}(\bar{e}_{j'})$ are linearly independent over $R^\phi$. We have shown that $\Res^A_{A^\phi}(M)\supset \bigoplus_{i=0}^{m-1} \left(M^{\overline{\cO}_i}\right)^{\oplus d_\phi}$ and the equality follows because the weight spaces on each side have the same dimension over $R^\phi/\sigma^k(\bbm^\phi)$.

\eqref{rank1-2} Now suppose that $\beta_{A,\cO}\neq \emptyset$, and let $\sigma^k(\bbm)\in\beta_{A,\cO}$, then for all $j=0,\ldots,m-1$ we have $\pi(\sigma^{-j}(\sigma^k(\bbm))=\sigma^{k-j}(\bbm^\phi)\in \beta_{A^\phi}\cap\overline{\cO}$. In particular, since $k-j$ with $j=0,\ldots,m-1$ varies over all congruence classes modulo $m$, we have that $\beta_{A^\phi}\cap\overline{\cO}_i\neq\emptyset$ for all $i=0,\ldots,m$, so all those are infinite orbits with breaks. Then the same reasoning from part (1) applies here, in terms of the decomposition of the modules, once we take into account their supports. 
To conclude we need only verify that $\pi(\Supp_R(M_{[\bbn]}))\cap\overline{\cO}_i$ is indeed an interval between breaks for $s$. First of all, notice that because of Proposition \ref{prop.infiniteorb} and the fact that for all $k$, $\sigma^k(\bbm^\phi)=(\sigma^k(\bbm))^\phi$, the order on $\cO$ descends to the order naturally defined on $\overline{\cO}$ and is then restricted to the order on $\overline{\cO}_i$ for each $i=0,\ldots,m$. So for all $\bbn\in\beta_{A,\cO}'$ we have
\[\pi(\Supp_R(M_{[\bbn]}))=\{\bbm^\phi~|~(\bbn^-)^\phi<\bbm^\phi\leq \bbn^\phi\}\] 
(by convention here $(\pm\infty)^\phi=\pm\infty$). Now, when $\bbn^-,\bbn\neq\pm\infty$, there exist unique $j_1, j_2\in \ZZ$ such that $0\leq j_1,j_2\leq m-1$ and $\sigma^{-j_1}((\bbn^-)^\phi), \sigma^{-j_2}(\bbn^\phi)\in\overline{\cO}_i$. If $\pi(\Supp_R(M_{[\bbn]}))\cap\overline{\cO}_i\neq\emptyset$, then $\sigma^{-j_1}((\bbn^-)^\phi)<\sigma^{-j_2}(\bbn^\phi)$  and no other breaks for $s$ are in $\overline{\cO}_i$ between those two, so 
\[\pi(\Supp_R(M_{[\bbn]}))\cap\overline{\cO}_i=\{\bbm^\phi\in\overline{\cO}_i~|~\sigma^{-j_1}((\bbn^-)^\phi)<\bbm^\phi\leq \sigma^{-j_2}(\bbn^\phi)\}\]
is an interval as required. If either $\bbn^-=-\infty$ or $\bbn=\infty$, the same applies, except that $\sigma^{-j_1}$ or $\sigma^{-j_2}$ are not needed.
\end{proof}

\begin{example}
Let $R=\CC[h]$, $\sigma(h)=h-1$, $t=h(h -2)$, then we define the automorphism $\phi$ of $A(R,\sigma,t)$ by $\phi(h)=h$, $\phi(X^\pm)=\alpha^{\pm 1}X^\pm$ with $\alpha=e^{\frac{2\pi i}{3}}$, so $m=3$. Let $\bbm=(h)$, $\cO=\Sigma\cdot\bbm=\{(h-k)~|~k\in\ZZ\}$ which is an infinite orbit with breaks $\{(h),(h-2)\}$. Then, since $R^\phi=R$, 
$$\overline{\cO}=\cO=\bigsqcup_{i=0}^2 T\cdot \sigma^i\bbm=\bigsqcup_{i=0}^2 \{(h-k)~|~k\equiv i\mod 3\}=\bigsqcup_{i=0}^2\overline{\cO}_i.$$
In the diagram below we have a part of the orbit $\cO$, which is subdivided into $T$-orbits, where $\overline{\cO}_0$ is denoted by circles, $\overline{\cO}_1$ is denoted by triangles, and $\overline{\cO}_2$ is denoted by squares. We have also made the shape hollow if the corresponding ideal is a break for $t$. The breaks for $s=\sigma^{-2}(t)\sigma^{-1}(t)t$ are then $\{(h+2), (h+1), (h), (h-1), (h-2)\}$.

\vspace{.5cm}

\begin{center}
\begin{tikzpicture}
\draw[-] (-7,0) -- (9,0) ;
\foreach \x in {3,2,1}{
\draw(-2*\x,0.2) node[above] {$(h+\x)$};}
\foreach \x in {4,3,2,1}{
\draw(2*\x,0.2) node[above] {$(h-\x)$};}
\draw(0,0.2) node[above] {$(h)$};
\foreach \x in {-6,0,6}{
\filldraw(\x,0) circle(3pt); }
\filldraw[white] (0,0) circle(3pt);
\draw (0,0) circle(3pt);
\foreach \x in {-4,2,8}{
\filldraw (\x,0.1) -- (\x+0.1,-0.1) -- (\x-0.1,-0.1) -- cycle; }
\foreach \x in {-2,4}{
\filldraw (\x+0.1,0.1) -- (\x+0.1,-0.1) -- (\x-0.1,-0.1) -- (\x-0.1,0.1) -- cycle; }
\filldraw[white] (4+0.1,0.1) -- (4+0.1,-0.1) -- (4-0.1,-0.1) -- (4-0.1,0.1) -- cycle;
\draw (4+0.1,0.1) -- (4+0.1,-0.1) -- (4-0.1,-0.1) -- (4-0.1,0.1) -- cycle;
\draw (-7,-0.5) -- (0.8,-0.5) -- (0.8,-0.3);
\draw (-3.5,-0.5) node[below] {$M_1$};
\draw (1.2,-0.3) -- (1.2,-0.5) -- (4.8,-0.5) -- (4.8,-0.3);
\draw (3,-0.5) node[below] {$M_2$};
\draw (9,-0.5) -- (5.2,-0.5) -- (5.2,-0.3);
\draw (7,-0.5) node[below] {$M_3$};
\end{tikzpicture}
\end{center}

\vspace{.2cm}

Then, there are 3 simple weight modules for $A$, $M_p$, $p=1,2,3$ with $\Supp_R(M_1)=\{(h-k)~|~k\in\ZZ_{\leq 0}\}$, $\Supp_R(M_2)=\{(h-1),(h-2)\}$, $\Supp_R(M_3)=\{(h-k)~|~k\geq 3\}$. The action of $X^+$ shifts weight spaces to the right, while the action of $X^-$ shifts to the left. The action by $A^\phi$ is then given by $(X^{\pm})^3$ so it can be thought of as a shift by three places, until the next space with the same shape. When we restrict to $A^\phi$ we have then $\Res^A_{A^\phi}(M_p)=M_p^{\overline{\cO}_0}\oplus M_p^{\overline{\cO}_1}\oplus M_p^{\overline{\cO}_2}$ with $\Supp_R(M_p^{\overline{\cO}_i})=\Supp(M_p)\cap\overline{\cO}_i.$ Note that $M_2^{\overline{\cO}_0}=0$ because the support of $M_2$ does not intersect that $T$-orbit. Also note that, for example, $(h+1)$ is a break for $s$, so $\Supp_R(M_1^{\overline{\cO}_2})=\{\bbm\in \overline{\cO}_2~|~-\infty<\bbm\leq (h+1)\}$ is indeed an interval between breaks for $s$. 
\end{example}

When the orbits are finite, things are more complicated (and potentially more interesting) because there can be non-isomorphic simple modules with the same support.

\begin{example}
Let $R=\CC[h]$, $\sigma(h)=ih$, $t=h^2-1$, then the rank $1$ GWA $A=A(R,\sigma,t)$ has an automorphism $\phi$ defined by $\phi(h)=-h$, $\phi(X^\pm)=X^\pm$. In this case, $A^\phi=A(R^\phi,\tau,s)$, with $R^\phi=\CC[h^2]$, $\tau=\sigma$, $s=t$. We take $\bbm=(h-2)$, then 
\[\cO=\Sigma\cdot\bbm=\{\sigma^k(\bbm)\}_{k=0}^3=\{(h-2), (h-2i), (h+2), (h+2i)\}\]
is a finite orbit, and we have, for $c=2,2i,-2,-2i$, that $(\grp{\phi}\cdot(h-c))\cap \cO=\{(h-c),(h+c)\}$, hence $\left(\CC[h](h-c)\right)^\phi=\CC[h^2](h^2-c^2)=\left(\CC[h](h+c)\right)^\phi$, illustrating why Proposition \ref{prop.infiniteorb} does not apply to finite orbits.
Moreover, for each $z\in\CC^\times$ we can define a simple weight module $M\in(A,R)\wmod_{\cO}$ for $A$ with basis $\{v_2,v_{2i},v_{-2},v_{-2i}\}$, where $(h-c)v_c=0$ and, with the basis in that order, we have the linear operators
\[X^-=\begin{bmatrix} 0 & 0 & 0 & z \\
1 & 0 & 0 & 0 \\
0 & 1 & 0 & 0 \\
0 & 0 & 1 & 0 
\end{bmatrix}\qquad 
X^+ =\begin{bmatrix} 0 & -5 & 0 & 0 \\
0 & 0 & 3 & 0 \\
0 & 0 & 0 & -5 \\
3/z & 0 & 0 & 0 
\end{bmatrix}.
\]
Then 
\[\pi(\cO)=\overline{\cO}=\Sigma\cdot\bbm^\phi=\{(h^2-4), (h^2+4)\}\]
and $\Res^A_{A^\phi}(M)=M_+\oplus M_-$. Here $M_\pm \in (A^\phi,R^\phi)\wmod_{\overline{\cO}}$ is a simple weight module of dimension $2$ with basis $\{v_{-2}\pm\sqrt{z}v_2,v_{-2i}\pm\sqrt{z}v_{2i}\}$, (notice that those are not weight vectors for $R$, but they are weight vectors for $R^\phi$), and the action of $X^+$ and $X^-$ on $M_{\pm}$ is given by
\[ X^-=\begin{bmatrix} 0 & \pm\sqrt{z} \\
1 & 0
\end{bmatrix} \qquad X^+=\begin{bmatrix} 0 & -5 \\ \pm3/\sqrt{z} & 0 \end{bmatrix}.\]
This shows that the situation here is different from Theorem \ref{thm.rank1}, since considering the restriction of a simple module and taking the component with support in a given orbit of $\MaxSpec(R^\phi)$ does not yield several copies of the same simple module but rather the sum of two non isomorphic simples.
\end{example}

\subsection{Rank 2}
Let $n=2$, $R=\kk[h]$, $\sigma_i(h)=h-\beta_i$, $i=1,2$, $t=(p_1,p_2)$ as in Example \ref{ex.rank2}, then we have the noncommutative fiber product TGWA $A=A(R,\sigma,t)$. Simple weight modules for such TGWAs have been classified in \cite{hart4}. As a quick summary of those results, for each orbit $\cO\in\MaxSpec(R)/\Sigma$, $t$ corresponds to a union of lattice paths (which topologically are loops) on a cylinder and the simple weight modules in $(A,R)\wmod_\cO$ are $M_{\cD,\xi}$, where $\cD$ is a connected component of the cylinder minus the loops, and $\xi\in\kk$ with $\xi= 0$ if and only if $\cD$ is contractible. Here $\cD$ corresponds to the support of the weight module, while $\xi$ is the scalar giving the action of a certain centralizing element $\cC$. 

When $\beta_1=1=-\beta_2$, and $p_1=h$, $p_2=h-1$ then $A$ is a $\kk$-finitistic TGWA of Type $A_2$. We can then take $\phi=\phi_1$ or $\phi_2$ as in Hypothesis \ref{hyp.A2auto} and the resulting fixed ring $A^{\phi}$ is again a noncommutative fiber product TGWA by Theorem \ref{thm.A2} (the fixed ring is no longer of type $A_2$ though). Notice that, since $\phi$ has to commute with $\sigma_1$ and $\sigma_2$, we have necessarily that $\phi|_R=\id_R$ ($\phi$ has to be a shift to commute with $\sigma_i$, and it has to leave $p_1$ and $p_2$ invariant). We only consider the orbit $\cO=\{(h-k)~|~k\in\ZZ\}$ (all other orbits do not have any breaks so they have a single simple weight module with full support). In this case the cylinder, drawn as a plane strip where the points $(1,y)$ are identified with the points $(0,y-1)$, looks as follows.

\begin{center}
\begin{tikzpicture}[scale=1]
\draw[-] (0,-0.5) -- (0,4.5);
\draw[dotted] (0,-1) -- (0,-0.5);
\draw[dotted] (0,4.5) -- (0,5);
\draw[dashed] (1,-0.5) -- (1,4.5);
\draw[dotted] (1,-1) -- (1,-0.5);
\draw[dotted] (1,4.5) -- (1,5);
\foreach \x in {0,1,...,4}
{\draw[-] (0,\x) -- (1,\x);
}
\foreach \x in {1,2}
{\filldraw (0.5,2.5-\x) circle(1.5pt) node[below]{$h-\x$};
}
\filldraw (0.5,2.5) circle(1.5pt) node[below]{$h$};
\filldraw (0.5,3.5) circle(1.5pt) node[below]{$h+1$};
\draw[thick,red] (0,1) -- (0,2);
\draw[thick,blue] (1,2) -- (0,2);
\draw[dashed,red] (1,2) -- (1,3);
\draw[-] (-0.2,2) -- (-0.3,2) -- (-0.3,4.5);
\draw[-] (1.2,2) -- (1.3,2) -- (1.3,-0.5);
\draw (-0.8,3.5) node {$\cD_+$};
\draw (1.8,0.5) node {$\cD_-$};
\end{tikzpicture}
\end{center}
Here the dots correspond to the weight spaces, $\sigma_1$ corresponds to moving one spot to the right (down by the identification of points on the cylinder), $\sigma_2$ corresponds to moving up one spot (or to the left). We draw edges to make lattice paths from $t=(p_1,p_2)$ by selecting the vertical edges to the right of the weight spaces of the factors of $p_1$ and the horizontal edges above the weight spaces of the factors of $p_2$. Hence, the red edge corresponds to $p_1=h$ (we actually solidly drew its identification on the left boundary of the vertical strip and dotted the one on the right boundary), the blue edge corresponds to $p_2=h-1$ and together they form a lattice path that cuts the cylinder into two non-contractible connected components $\cD_+$ and $\cD_-$. 
By \cite[Prop. 6.3]{hart4} the centralizing element $\cC$ in this case actually belongs to $A$ and not only to its localization by $\kk(h)$, and can be written in equivalent ways as
\begin{equation}\label{eq.defc} \cC=X_1^+X_2^+\frac{1}{h-1}=X_2^+X_1^+\frac{1}{h}=X_2^+X_1^+-X_1^+X_2^+.\end{equation}

Now, consider $\phi=\phi_2$ as in Hypothesis \ref{hyp.A2auto}, the case of $\phi_1$ being completely analogous. Since $\alpha_2=1$, we let $\alpha=\alpha_1$, with multiplicative order $m=m_1$, then $A^{\phi}$ is the rank 2 noncommutative fiber product TGWA $A(R^\phi,\tau,s)$ with $R^\phi=R=\kk[h]$, $\tau_1(h)=\sigma_1^m(h)=h-m$, $\tau_2(h)=\sigma_2(h)=h+1$, $s_1=\prod_{j=0}^{m-1}\sigma_1^{-j}(p_1)=\prod_{j=0}^{m-1}(h+j)$, $s_2=p_2=h-1$. For $i=1,2$, we denote by $Y_i^{\pm}$ the generators of $A^{\phi}$, so that $Y_1^\pm=(X_1^\pm)^m$, and $Y_2^\pm=X_2^\pm$. Notice that $T=\Sigma$, hence $\overline{\cO}=\cO$.

Then, the cylinder for $A^{\phi}$ looks as below, where the points $(1,y)$ are identified with the points $(0,y-m)$.

\begin{center}
\begin{tikzpicture}[scale=1]
\draw[-] (0,-0.5) -- (0,0);
\draw[-] (0,3) -- (0,4.2);
\draw[-] (0,4.8) -- (0,6.5);
\draw[dotted] (0,4.2) -- (0,4.8);
\draw[dotted] (0,-1) -- (0,-0.5);
\draw[dotted] (0,6.5) -- (0,7);
\draw[dashed] (1,-0.5) -- (1,1.2);
\draw[dashed] (1,1.8) -- (1,3);
\draw[dotted] (1,1.2) -- (1,1.8);
\draw[dashed] (1,6) -- (1,6.5);
\draw[dotted] (1,-1) -- (1,-0.5);
\draw[dotted] (1,6.5) -- (1,7);
\foreach \x in {0,1,...,6}
{\draw[-] (0,\x) -- (1,\x);
}
\filldraw (0.5,0.5) circle(1.5pt) node[below]{$h-m$};
\draw (0.5,1.6) node{$\vdots$};
\draw (0.5,4.6) node{$\vdots$};
\filldraw (0.5,3.5) circle(1.5pt) node[below]{$h$};
\filldraw (0.5,2.5) circle(1.5pt) node[below]{$h-1$};
\filldraw (0.5,5.5) circle(1.5pt) node[below]{$h+m$};
\draw[thick,red] (0,1.8) -- (0,3);
\draw[thick,dotted, red] (0,1.2) -- (0,1.8);
\draw[thick,red] (0,0) -- (0,1.2);
\draw[thick,blue] (1,3) -- (0,3);
\draw[dashed,red] (1,3) -- (1,4.2);
\draw[dashed,red] (1,4.8) -- (1,6);
\draw[dotted,red] (1,4.2) -- (1,4.8);
\draw[-] (-0.2,3) -- (-0.3,3) -- (-0.3,6.5);
\draw[-] (1.2,3) -- (1.3,3) -- (1.3,-0.5);
\draw (-0.8,4.5) node {$\cD_+$};
\draw (1.8,1.5) node {$\cD_-$};
\end{tikzpicture}
\end{center}
Here $\tau_1$ corresponds to moving one spot to the right (or $m$ spots down, by the identification of points on the cylinder), $\tau_2=\sigma_2$ corresponds to moving up one spot. The red edges are (to the right of) the factors of $s_1$, the blue edge is (above) the one factor of $s_2=p_2$ and together they form a lattice path that cuts the cylinder into two non contractible connected components. We use the same notation of $\cD_+$ and $\cD_-$ as above because they consist of the same weight spaces. By \cite[Prop. 6.3]{hart4} the centralizing element $\cC^\phi$ actually belongs to $A^{\phi}$, and can be written (among other equivalent ways) as
\[\cC^\phi=Y_1^+(Y_2^+)^m\frac{1}{ (h-m)(h-m+1)\cdots(h-1)}=(Y_2^+)^mY_1^+\frac{1}{h(h+1)\cdots(h+m-1)}.\]

\begin{proposition}\label{prop.rank2mod}
Let $\phi=\phi_2$ as in Hypothesis \ref{hyp.A2auto}.
If we denote the simple weight modules of $A^{\phi}$ by $M^{\phi}_{\cD,\eta}$, where $\cD$ is the support identified as a connected component of the cylinder, and $\eta$ is given by the action of $\cC^\phi$, then we have \[\Res^A_{A^{\phi}}M_{\cD,\xi}=M^\phi_{\cD,\xi^m}.\]
\end{proposition}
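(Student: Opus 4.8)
The plan is to settle the combinatorics first and then pin down the scalar $\eta$ by an algebra identity inside $A$.

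First I would record the reductions. Since $\phi=\phi_2$ fixes $R=\kk[h]$ pointwise (so $\ell=1$) and $\sigma_1\sigma_2=\id_R$, the subgroup $T=\grp{\tau_1,\tau_2}=\grp{\sigma_1^m,\sigma_2}$ equals $\grp{\sigma_1}=\Sigma$; hence for the orbit $\cO=\{(h-k)\mid k\in\ZZ\}$ we have $\overline{\cO}=\cO$, the pullback $\pi$ is the identity, and the degree $d_\phi=1$. By Lemma~\ref{lem.weightres}, $\Res^A_{A^\phi}(M_{\cD,\xi})$ is an $R$-weight module for $A^\phi$ whose weight spaces agree, as $R$-modules, with those of $M_{\cD,\xi}$; in particular its support (as a subset of $\MaxSpec(R)=\MaxSpec(R^\phi)$) is exactly $\cD$, and by the discussion preceding the statement the lattice path attached to $s=(s_1,s_2)$ cuts the cylinder governing simple weight modules of $A^\phi$ into the same two components $\cD_\pm$, so $\cD$ really is such a component.

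Next I would prove that $\Res^A_{A^\phi}(M_{\cD,\xi})$ is simple over $A^\phi$. Any $R^\phi$-submodule $N$ of an $R$-weight module decomposes as $N=\bigoplus_\bbm(N\cap M_\bbm)$, and since each weight space of $M_{\cD,\xi}$ is a simple $R$-module, a nonzero $A^\phi$-submodule has the form $N=\bigoplus_{\bbm\in S}(M_{\cD,\xi})_\bbm$ for some nonempty $S\subseteq\cD$. To see $S=\cD$ it suffices that $\cD$ is connected under the generators $Y_2^{\pm}=X_2^{\pm}$ of $A^\phi$. Because $\sigma_2=\sigma_1^{-1}$, the operators $Y_2^{+}$ and $Y_2^{-}$ send a weight $\bbm\in\cD$ to each of its two neighbours on the orbit line; and within a component (which contains no walls of the path for $t$) each $X_2^{\pm}$ restricts to an isomorphism between consecutive weight spaces that both lie in the component, so every consecutive pair in $\cD$ is joined. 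Since $\cD$ is an interval of the orbit line (in fact a half-line), it is connected, so $S=\cD$ and the restriction is simple. Combining this with the previous paragraph, $\Res^A_{A^\phi}(M_{\cD,\xi})\iso M^\phi_{\cD,\eta}$ where $\eta$ is the scalar by which the central element $\cC^\phi$ acts.

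Finally I would compute $\eta$ by showing $\cC^\phi=\cC^m$ in $A$. Put $a=X_2^{+}$, $b=X_1^{+}$. From $X_1^{+}h=(h-1)X_1^{+}$ and $X_2^{+}h=(h+1)X_2^{+}$ one gets $ab\cdot h=h\cdot ab$, and by \eqref{eq.defc} the central element is $\cC=abh^{-1}$; in particular $ab=\cC h$ and $a\cdot f(h)=f(h+1)\cdot a$ for every polynomial $f$. An induction (base case $ab=\cC h$) then gives
\[
a^m b^m=a\bigl(a^{m-1}b^{m-1}\bigr)b=\cC^{m-1}\,a\bigl(h(h+1)\cdots(h+m-2)\bigr)b=\cC^{m-1}(h+1)\cdots(h+m-1)\,\cC h=\cC^{m}\,h(h+1)\cdots(h+m-1).
\]
Since $Y_2^{+}=X_2^{+}$, $Y_1^{+}=(X_1^{+})^m$, and $\cC^m$ commutes with $h$, the displayed formula for $\cC^\phi$ yields
\[
\cC^\phi=(Y_2^{+})^m Y_1^{+}\bigl(h(h+1)\cdots(h+m-1)\bigr)^{-1}=a^m b^m\bigl(h(h+1)\cdots(h+m-1)\bigr)^{-1}=\cC^{m},
\]
an identity of elements of $A$ (both sides lie in $A$ by \cite[Prop.~6.3]{hart4}). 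Hence $\cC^\phi$ acts on $M_{\cD,\xi}$ by $\xi^m$, so $\eta=\xi^m$ and $\Res^A_{A^\phi}(M_{\cD,\xi})=M^\phi_{\cD,\xi^m}$; the case $\phi=\phi_1$ is identical with the roles of $X_1^{\pm}$ and $X_2^{\pm}$ interchanged. I expect the main obstacle to be the second paragraph — reconciling the two cylinder pictures and checking that passing to $A^\phi$ neither shrinks the support nor disconnects $\cD$; once that combinatorics is in place the identity $\cC^\phi=\cC^m$ is a short computation and everything else is formal.
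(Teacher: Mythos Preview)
Your proof is correct and follows the same strategy as the paper: both reduce everything to the identity $\cC^\phi=\cC^m$ and establish it by an inductive computation in the localization $A\otimes_{\kk[h]}\kk(h)$. You are more careful than the paper about the simplicity of the restriction (the paper simply declares the support statement ``clear'' and lets the classification from \cite{hart4} do the rest), and your single induction on $a^kb^k=\cC^k\,h(h+1)\cdots(h+k-1)$ is a clean repackaging of the paper's two separate inductions \eqref{eq.exchange-gen} and \eqref{eq.ctok}. The one place to tighten: when you pass from $a\bigl(\cC^{m-1}h(h+1)\cdots(h+m-2)\bigr)b$ to $\cC^{m-1}a\bigl(h(h+1)\cdots(h+m-2)\bigr)b$ you are using that $\cC$ commutes with $a=X_2^+$, which you should state explicitly---it follows from the Serre relation $(X_2^+)^2X_1^+-2X_2^+X_1^+X_2^++X_1^+(X_2^+)^2=0$, or from the centrality of $\cC$ in \cite[Prop.~6.3]{hart4} that you already cite at the end.
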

\begin{proof}
The fact that the support is the same when restricting is clear. The only thing that needs to be checked is the action of $\cC^\phi$. We claim that $\cC^\phi=\cC^m$ in $A$. We will actually do the computation in the localization $A\otimes_{\kk[h]}\kk(h)$, but since both $\cC^\phi$ and $\cC^m$ belong to $A$, the equality stands there as well.

From \eqref{eq.tgwc1}-\eqref{eq.tgwc2}, we obtain the relation $X_1^+X_2^+h=X_2^+X_1^+(h-1)$ which can be written in the localization as 
\begin{equation}\label{eq.exchange}
X_1^+X_2^+=X_2^+X_1^+\frac{h-1}{h}.
\end{equation}
By induction, we claim that for $k>0$
\begin{equation}\label{eq.exchange-gen}
X_1^+(X_2^+)^k=(X_2^+)^kX_1^+\frac{h-k}{h}.
\end{equation}
It is clear for $k=1$, then
\begin{align*}
X_1^+(X_2^+)^{k+1}&=X_1^+(X_2^+)^kX_2^+ \\
&\stackrel{\mathclap{\text{(ind hyp)}}}{=} \quad (X_2^+)^kX_1\frac{h-k}{h}X_2^+ \\
&= (X_2^+)^kX_1X_2^+\frac{h-k-1}{h-1}\\
&\stackrel{\mathclap{\eqref{eq.exchange}}}{=} \quad
(X_2^+)^kX_2^+X_1\frac{h-1}{h}\cdot\frac{h-k-1}{h-1}\\
&=(X_2^+)^kX_1^+\frac{h-(k+1)}{h}.
\end{align*}
Now we claim that, for all $k>0$
\begin{equation}\label{eq.ctok}
\cC^k=(X_2^+)^k(X_1^+)^k\frac{1}{h(h+1)\cdots(h+k-1)}.    
\end{equation}
Again we proceed by induction, for $k=1$ this is clear by \eqref{eq.defc}. Then
\begin{align*}
\cC^{k+1}&=\cC\cdot \cC^k \\
&\stackrel{\mathclap{\text{(ind hyp)}}}{=} \quad
X_2^+X_1^+\frac{1}{h}(X_2^+)^k(X_1^+)^k\frac{1}{h(h+1)\cdots(h+k-1)} \\
&= X_2^+X_1^+(X_2^+)^k(X_1^+)^k\frac{1}{h^2(h+1)\cdots(h+k-1)}\\
&\stackrel{\mathclap{\eqref{eq.exchange-gen}}}{=} \quad
X_2^+(X_2^+)^kX_1^+\frac{h-k}{h}(X_1^+)^k\frac{1}{h^2(h+1)\cdots(h+k-1)}\\
&= (X_2^+)^{k+1}(X_1^+)^{k+1}\frac{h}{h+k}\cdot \frac{1}{h^2(h+1)\cdots(h+k-1)}\\
&=(X_2^+)^{k+1}(X_1^+)^{k+1}\frac{1}{h(h+1)\cdots(h+k-1)(h+k)}.
\end{align*}
By taking $k=m$ in \eqref{eq.ctok}, we then get that 
$$\cC^\phi=(Y_2^+)^mY_1^+\frac{1}{h(h+1)\cdots(h+m-1)}=(X_2^+)^{m}(X_1^+)^{m}\frac{1}{h(h+1)\cdots(h+m-1)}=\cC^m$$ 
and the result follows because if $\cC$ acts on a module by $\xi$, in the restriction $\cC^\phi$ will act by $\xi^m$.
\end{proof}

\bibliographystyle{plain}

\end{document}